\title[Rapid Evolution of Complex Cycles]{Rapid Evolution of Complex Limit Cycles}
\author[Nikolay Dimitrov]{}
\email{dimitrov@math.mcgill.ca}
\subjclass[2010]{Primary: 37F75, 34M35, 30F10; Secondary: 37M10,
57R22}
\keywords{Holomorphic foliation, complex limit cycle, non-local
Poincar\'e map, Riemann surface, covering space, fiber bundle}
\newtheorem{thm}{Theorem}
\newtheorem{lem}{Lemma}[section]
\newtheorem{prop}{Proposition}
\newtheorem{Def}{Definition}
\newtheorem{cor}{Corollary}[section]
\newcommand{\Atlas}{\mathcal{A}}
\newcommand{\Integers}{\mathbb{Z}}
\newcommand{\Naturals}{\mathbb{N}}
\newcommand{\CC}{\mathbb{C}^2}
\newcommand{\cptwo}{\mathbb{CP}^2}
\newcommand{\RR}{\mathbb{R}^2}
\newcommand{\cc}{\mathbb{C}}
\newcommand{\Band}{\mathbb{B}}
\newcommand{\Disc}{\mathbb{D}}
\newcommand{\Deltae}{\Delta_{\e}}
\newcommand{\deltaprime}{\delta^{\prime}}
\newcommand{\deltaprimehat}{\hat{\delta}^{\prime}}
\newcommand{\deltaprimee}{\delta^{\prime}_{\e}}
\newcommand{\deltahat}{\hat{\delta}}
\newcommand{\e}{\varepsilon}
\newcommand{\etamax}{\eta_{max}}
\newcommand{\eprime}{\e^{\prime}}
\newcommand{\estar}{\e^{*}}
\newcommand{\edoublestar}{\e^{**}}
\newcommand{\etaprime}{\eta^{\prime}}
\newcommand{\etatilde}{\tilde{\eta}}
\newcommand{\Gammatilde}{\tilde{\Gamma}}
\newcommand{\Gammahat}{\hat{\Gamma}}
\newcommand{\phie}{\varphi^{\e}}
\newcommand{\Aprime}{A^{\prime}}
\newcommand{\Ahat}{\hat{A}}
\newcommand{\Aprimehat}{\hat{A}^{\prime}}
\newcommand{\Apzeroprime}{A^{\prime}_{p_0}}
\newcommand{\Apzerotilde}{\tilde{A}_{p_0}}
\newcommand{\alphaqzeroe}{\alpha_{q_0,\e}}
\newcommand{\alphaqonee}{\alpha_{q_1,\e}}
\newcommand{\alphaqje}{\alpha_{q_j,\e}}
\newcommand{\Bdelta}{B_{\delta_0}}
\newcommand{\Bpzero}{B_{p_0}}
\newcommand{\Bdeltahat}{\hat{B}_{\delta_0}}
\newcommand{\betaqzeroe}{\beta_{q_0,\e}}
\newcommand{\betaqonee}{\beta_{q_1,\e}}
\newcommand{\betaqtwoe}{\beta_{q_2,\e}}
\newcommand{\betaqje}{\beta_{q_j,\e}}
\newcommand{\Cdelta}{C_{\delta_0}}
\newcommand{\Cdeltaprime}{C_{\delta_0}^{\prime}}
\newcommand{\Cdeltahat}{\hat{C}_{\delta_0}}
\newcommand{\Cdeltaprimehat}{\hat{C}_{\delta_0}^{\prime}}
\newcommand{\Cpzero}{C_{p_0}}
\newcommand{\Cpzeroprime}{C^{\prime}_{p_0}}
\newcommand{\Chat}{\hat{C}}
\newcommand{\Cprimehat}{\hat{C}^{\prime}}
\newcommand{\CAhat}{\hat{C}_A}
\newcommand{\dtimess}{\Disc \times S}
\newcommand{\Dgamma}{D_{\gamma}}
\newcommand{\Edelta}{E_{\delta_0}}
\newcommand{\Ecdelta}{E(C_{\delta_0})}
\newcommand{\EAprime}{E(A^{\prime})}
\newcommand{\Fol}{\mathcal{F}^{\e}}
\newcommand{\Foliation}{\mathcal{F}}
\newcommand{\Ftilde}{\tilde{F}}
\newcommand{\Lqzero}{L_{q_0}}
\newcommand{\Lqone}{L_{q_1}}
\newcommand{\Lqtwo}{L_{q_2}}
\newcommand{\Lqj}{L_{q_j}}
\newcommand{\Pdelta}{P_{\delta_0, \e}}
\newcommand{\Pmap}{P_{\delta_0,\e}}
\newcommand{\PmapD}{P_{D_{\gamma}(\delta_0),\e}}
\newcommand{\Pmapezero}{P_{\delta_0,\e_0}}
\newcommand{\Pmaphat}{\hat{P}_{\delta_0,\e}}
\newcommand{\Pmaphatzero}{\hat{P}_{\delta_0,\e_0}}
\newcommand{\PmaphatD}{\hat{P}_{D_{\gamma}(\delta_0),\e}}
\newcommand{\Pmaphateprime}{\hat{P}_{\delta_0,\e^{\prime}}}
\newcommand{\Pmaphatezero}{\hat{P}_{\delta_0,\e_0}}
\newcommand{\Pmaptilde}{\tilde{P}_{\delta_0,\e}}
\newcommand{\Pmaptildeeprime}{\tilde{P}_{\delta_0,\e^{\prime}}}
\newcommand{\Pmaptildeestar}{\tilde{P}_{\delta_0,\e^{**}}}
\newcommand{\Pem}{P_{\e}^{(m)}}
\newcommand{\Pemprime}{P_{\e^{\prime}}^{(m)}}
\newcommand{\Pemstar}{P_{\e^{**}}^{(m)}}
\newcommand{\Pe}{P_{\e}}
\newcommand{\Pipzero}{\Pi_{p_0}}
\newcommand{\pizero}{\pi^{(0)}}
\newcommand{\phiqzeroe}{\phi_{q_0,\e}}
\newcommand{\phiqonee}{\phi_{q_1,\e}}
\newcommand{\phiqtwoe}{\phi_{q_2,\e}}
\newcommand{\phiqje}{\phi_{q_j,\e}}
\newcommand{\barphiqzeroe}{\bar{\phi}_{q_0,\e}}
\newcommand{\barphiqonee}{\bar{\phi}_{q_1,\e}}
\newcommand{\barphiqtwoe}{\bar{\phi}_{q_2,\e}}
\newcommand{\barphiqje}{\bar{\phi}_{q_j,\e}}
\newcommand{\phiqzeroehat}{\hat{\phi}_{\hat{q}_0,\e}}
\newcommand{\phizoneehat}{\hat{\phi}_{z_1,\e}}
\newcommand{\phizoneeprimehat}{\hat{\phi}_{z_1,\e^{\prime}}}
\newcommand{\phixzeroetilde}{\tilde{\phi}_{x_0,\e}}
\newcommand{\phixstaretilde}{\tilde{\phi}_{x^{*},\e}}
\newcommand{\phixonestaretilde}{\tilde{\phi}_{x^{*}_1,\e}}
\newcommand{\third}{\frac{1}{3}}
\newcommand{\twothird}{\frac{2}{3}}
\newcommand{\Uqzero}{U_{q_0}}
\newcommand{\Uqone}{U_{q_1}}
\newcommand{\Uqtwo}{U_{q_2}}
\newcommand{\Uqj}{U_{q_j}}
\newcommand{\Uxzerotilde}{\tilde{U}_{x_0}}
\newcommand{\Uxonestartilde}{\tilde{U}_{x_1^*}}
\newcommand{\Uxonestartildeprime}{\tilde{U}_{x_1^*}^{\prime}}
\newcommand{\Uxstartilde}{\tilde{U}_{x^*}}
\newcommand{\Uqzerohat}{\hat{U}_{\hat{q}_0}}
\newcommand{\Uzonehat}{\hat{U}_{z_1}}
\newcommand{\Xdelta}{X_{\delta_0}}
\newcommand{\Xdeltaprime}{X_{\delta_0}^{\prime}}
\newcommand{\Xdeltahat}{\hat{X}_{\delta_0}}
\newcommand{\Xdeltaprimehat}{\hat{X}_{\delta_0}^{\prime}}
\newcommand{\Yprime}{Y^{\prime}}
\begin{document}

\maketitle

\centerline{\scshape Nikolay Dimitrov}
\medskip
{\footnotesize
 \centerline{Department of Mathematics and Statistics}
   \centerline{McGill University}
   \centerline{805 Sherbrooke W.}
   \centerline{Montreal, QC H3A 2K6, Canada}
 }

\begin{abstract}
The current article studies certain problems related to complex
cycles of holomorphic foliations with singularities in the complex
plane. We focus on the case when polynomial differential one-form
gives rise to a foliation by Riemann surfaces. In this setting, a
complex cycle is defined as a nontrivial element of the
fundamental group of a leaf from the foliation. Whenever the
polynomial foliation comes from a perturbation of an exact
one-form, one can introduce the notion of a multi-fold cycle. This
type of cycle has at least one representative that determines a
free homotopy class of loops in an open fibred subdomain of the
complex plane. The topology of this subdomain is closely related
to the exact one-form mentioned earlier. We introduce and study
the notion of multi-fold cycles of a close-to-integrable
polynomial foliation. We also explore how these cycles correspond
to periodic orbits of a certain Poincar\'e map associated with the
foliation. Finally, we discuss the tendency of a continuous family
of multi-fold limit cycles to escape from certain large open
domains in the complex plane as the foliation converges to its
integrable part.
\end{abstract}


\section{Introduction}

Limit cycles of planar polynomial vector fields have long been a
focus of extensive research. For instance, one of the major
problems in this area of dynamical systems is the famous Hilbert's
16th problem \cite{I02} asking about the number and the location
of the limit cycles of a polynomial vector field of degree n in
the plane. Since the original Hilbert's problem continues to be
very persistent, some simplifications have been considered as
well. Among them is the so called infinitesimal Hilbert's 16
problem \cite{I02}, \cite{IY} concerned with the number of limit
cycles that can bifurcate from periodic solutions of a polynomial
Hamiltonian planar system by a small polynomial perturbation.
Recently, an answer to this question has been given in an article
by Binyamini, Novikov and Yakovenko \cite{BNY}.

When studying a planar polynomial vector field, an extension to
the complex domain proves to be helpful, an idea that can be
attributed to Petrovskii and Landis \cite{PL55}, \cite{PL57}. In
this way a polynomial complex vector field is obtained and the
holomorphic curves tangent to it form a partition of the complex
plane by Riemann surfaces, called a \emph{polynomial complex
foliation with singularities}, or in short \emph{polynomial
complex foliation} \cite{I02}, \cite{IY}.

We are going to focus on polynomial perturbations of a polynomial
Hamiltonian system in $\CC.$ More precisely we consider the
complex line field
\begin{equation}\label{basic}
F^{\e} = \ker(dH + \e \omega)
\end{equation}
with a one-form $\omega = Adx + Bdy,$ where $A,B$ and $H \in
\cc[x,y]$ are polynomials with complex coefficients and $\e$ is a
small complex parameter. As mentioned earlier, the holomorphic
curves tangent to $F^{\e}$ form a foliation of Riemann surfaces in
$\CC$ further denoted by $\Fol(\CC).$ Notice, that in the real
case the phase curves of a planar vector field are topologically
either lines or circles, i.e. curves with either a trivial or a
non-trivial (isomorphic to $\Integers$) fundamental group. This
simple observation leads us to the definition of a marked complex
cycle.


\begin{Def} \label{cycle}
A marked complex cycle of a complex foliation is a nontrivial
element of the fundamental group of a leaf from the foliation with
a marked base point.
\end{Def}

\noindent We denote a marked complex cycle by $(\Delta, q)$ where
$\Delta$ is the homotopy class of loops on the leaf, all passing
through the same base point $q$. In general, a real phase curve of
a polynomial vector field in $\RR$ extends to a Riemann surface
tangent to the vector field's complexification in $\CC.$ Thus, a
closed phase curve in $\RR$ defines a loop on the corresponding
complex leaf, giving rise to a nontrivial element from the
fundamental group of that leaf \cite{I02}. In other words, a real
closed phase curve is a marked complex cycle on its
complexification.


When $\e=0$ the foliation $\Foliation^0(\CC)$ consists of
algebraic leaves of the form $S_{u} = \{p \in \CC \,:\, H(p)=u\}$
embedded in $\CC.$ From now on, we are going to refer to
$\Foliation^0(\CC)$ as the \emph{integrable foliation} and to
$\Fol(\CC)$ as the \emph{perturbed foliation}. The idea is to
study the complex cycles of $\Fol(\CC)$ using our knowledge of
$\Foliation^0(\CC)$. 


One of the most powerful tools for studying foliations and
continuous dynamical systems in general, is the Poincar\'e map
\cite{I02}, \cite{IY}.

To construct a \emph{Poincar\'e map} for the foliation
$\Fol(\CC),$ one can follow several steps. Start by choosing a
point $p_0$ on a leaf $S_{u_0}$ of $\Foliation^0(\CC)$ and a
nontrivial loop $\delta_0 \subset S_{u_0}$ with a base point
$p_0.$ Take a small enough complex segment $L$ passing through
$p_0$ and transverse to the leaves of $\Foliation^0(\CC).$
Consider a tubular neighborhood $A$ of $\delta_0$ on the surface
$S_{u_0}.$ A tubular neighborhood $N(A)$ of $A$ in $\CC$ is
diffeomorphic to a direct product $A \times \Disc,$ where $\Disc
\subset \cc$ is the unit disc. Let $\varrho $ be the projection of
$N(A)$ onto $A$ along $\Disc.$ The direct product structure on
$N(A)$ can be chosen so that $L=\varrho^{-1}(p_0).$ If $\e$ is
chosen small enough, then for any point $q \in L$ close to $p_0$
the loop $\delta_0$ can be lifted to a curve $\delta(q)$ on the
leaf of the perturbed foliation $\Fol(\CC)$ passing through $q,$
so that $\delta(q)$ covers $\delta_0$ under the projection
$\varrho.$ By construction, $\delta(q)$ will have both of its
endpoints on $L,$ where $q \in L$ is one of them. Denote the
second endpoint by $P_{\delta_0, \e}(q) \in L$. Thus, we obtain a
correspondence $P_{\delta_0, \e} : L^{'} \to L$ where the open set
$L^{'} \subset L$ is the domain of $P_{\delta_0, \e}$. The map
$\Pdelta$ is holomorphic and close to identity. Notice that by
construction, if $\delta_0$ is homotopic on $S_{u_0}$ to another
loop $\delta_0^{'}$ passing through $p_0,$ then for small enough
$\e$ the two maps $P_{\delta_0, \e}$ and $P_{\delta^{'}_0, \e}$
will be equal.


The Poincar\'e map described above has the property that if two
points from the cross-section $L$ are in the same orbit of the map
then they belong to the same leaf of the foliation. Moreover, a
marked complex cycle of $\Fol(\CC),$ with a base point on $L^{'}$
and a representative that covers $m$ times the loop $\delta_0$
under the projection $\varrho,$ gives rise to an $m-$periodic
orbit of $P_{\delta_0, \e}.$ The inverse is also true \cite{PL55},
\cite{PL57}. An $m$-periodic orbit corresponds to a marked complex
cycles of $\Fol(\CC)$ with a representative contained in $N(A),$
covering $\delta_0$ under the projection $\varrho$ a number of $m$
times. Notice that since $\varrho$ is a deformation retraction of
$N(a)$ onto $A,$ the representative will be free homotopic to
$\delta_0^m$ inside $N(A).$


\begin{Def} \label{mfold}
A marked complex cycle is called a $\delta_0,m-$fold cycle
provided that it gives rise to an $m-$periodic orbit of some
Poincar\'e map $\Pdelta.$
\end{Def}

When $m>1$ and we do not want to specify the characteristics
$\delta_0$ and $m$ we call such a cycle \emph{multi-fold}. For any
$m>0,$ it is not difficult to see that $\Pdelta^m = P_{\delta_0^m,
\e}.$ Then a $\delta_0,m-$fold cycle is represented by a fixed
point of the $m-$th iteration of $\Pdelta$ or equivalently by a
fixed point of $P_{\delta_0^m, \e}.$ Now, we can give a definition
for a marked limit cycle.



\begin{Def} \label{limitcycle}
A marked limit cycle is a marked complex cycle represented by an
isolated fixed point of the appropriate Poincar\'e map.
\end{Def}



The case when $m=1$ has been extensively studied. In fact, the
real cycles of a planar polynomial vector field of the form
(\ref{basic}) extend to $1-$fold cycles of its complexificaion.
The aforementioned infinitesimal Hilbert's 16th problem
\cite{BNY}, \cite{I02} treats exactly the special case $m=1.$ The
following classical result, known as Pontryagin's criterium
\cite{Po} can be stated in the following form.

\begin{thm}\label{thm-poncrit}
Let $\delta_{u}$ be an analytic family of simple closed loops on
the corresponding leaves $S_u$ from the integrable foliation
$\Foliation^0(\CC),$ and consider the analytic function $I(u)=
\int_{\delta_u} \omega.$ If there exists $u_0$ such that
$I(u_0)=0$ and $I'(u_0) \neq 0$ then there exists a continuous
family $\delta^{\e}$ of loops, each representing a 1-fold complex
limit cycle of $\Fol(\CC),$ such that $\delta^{\e} \to
\delta_{u_0}$ as $\e \to 0,$ always staying close to
$\delta_{u_0}.$
\end{thm}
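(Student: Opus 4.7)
The plan is to realize the limit cycle as an isolated fixed point of the Poincar\'e map $\Pmap$ built on the loop $\delta_0 := \delta_{u_0}$ and a transverse segment $L \subset \CC$ through $p_0 \in \delta_{u_0}$, as described in the introduction. Since $L$ is transverse to $\Foliation^0(\CC)$ at $p_0$, the function $H|_L$ is a local biholomorphism near $p_0$, so $u := H|_L$ serves as a holomorphic coordinate on $L$. In this coordinate $\Pmap$ reduces to the identity when $\e = 0$, and the task becomes to solve $d(u,\e) := u(\Pmap(q)) - u = 0$ for $(u,\e)$ near $(u_0,0)$, where $q \in L$ is the point with $H(q)=u$.

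The key input is a Melnikov-type expansion of $d$. Let $\gamma(u,\e)$ denote the lift of $\delta_0$ through $q$ on the leaf of $\Fol(\CC)$ used in the definition of $\Pmap$, and arrange the product structure on $N(A)$ so that $\gamma(u,0) = \delta_u$ for $u$ near $u_0$. Along every leaf of $\Fol(\CC)$ one has $dH = -\e\omega$, hence
\begin{equation*}
d(u,\e) \;=\; H(\Pmap(q)) - H(q) \;=\; \int_{\gamma(u,\e)} dH \;=\; -\e \int_{\gamma(u,\e)} \omega.
\end{equation*}
Joint holomorphic dependence of $\gamma(u,\e)$ on $(u,\e)$ together with the normalization $\gamma(u,0) = \delta_u$ then yield
\begin{equation*}
d(u,\e) \;=\; -\e\, I(u) \;+\; O(\e^2),
\end{equation*}
uniformly for $u$ in a small neighborhood of $u_0$.

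For $\e \neq 0$, the zeros of $d$ coincide with the zeros of $\Phi(u,\e) := d(u,\e)/\e$, which extends holomorphically across $\e = 0$ with $\Phi(u,0) = -I(u)$. The hypotheses $I(u_0) = 0$ and $I'(u_0) \neq 0$ give $\Phi(u_0,0) = 0$ and $\partial_u \Phi(u_0,0) \neq 0$, so the holomorphic implicit function theorem produces a unique analytic branch $\e \mapsto u_*(\e)$ with $u_*(0) = u_0$ and $\Phi(u_*(\e),\e) \equiv 0$. Because $\partial_u \Phi(u_*(\e),\e) \neq 0$ for small $\e$, this zero is simple, hence isolated on $L$, and the corresponding fixed point $q_*(\e) \in L$ of $\Pmap$ is isolated. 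By Definition \ref{limitcycle}, the cycle $\delta^{\e} := \gamma(u_*(\e),\e)$ is a marked limit cycle, and continuity of $\gamma$ in both arguments delivers $\delta^{\e} \to \delta_{u_0}$.

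The step requiring the most care is the identification $\gamma(u,0) = \delta_u$, which is what makes the leading Melnikov coefficient equal to $I(u)$ rather than the integral of $\omega$ over some other representative of the same free homotopy class. One approach is to construct the product structure on $N(A)$ directly from the analytic family $\delta_u$, using $\delta_u$ as an angular coordinate in the tubular neighborhood, which is feasible because $\delta_u$ varies analytically with $u$. Alternatively, Gelfand--Leray calculus shows that the $u$-derivative of the integral of $\omega$ over any such loop equals the integral of the Gelfand--Leray form $\omega/dH$ over the common homology class, so the discrepancy between the two choices of lift is a constant, which vanishes at $u_0$ because $\gamma(u_0,0)$ coincides with $\delta_0 = \delta_{u_0}$. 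Joint holomorphy in $(u,\e)$ of the integral reduces in turn to the holomorphic dependence of the leaves of $\Fol(\CC)$ on $\e$, after which the implicit function and isolation arguments are straightforward.
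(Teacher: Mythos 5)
The paper does not prove Theorem~\ref{thm-poncrit}; it is stated as Pontryagin's classical criterion and attributed to \cite{Po}. Your argument is a correct reconstruction of the standard proof: rewrite the displacement of the Poincar\'e map as
\[
H(\Pmap(q))-H(q)=\int_{\gamma(u,\e)} dH=-\e\int_{\gamma(u,\e)}\omega
\]
using that $dH+\e\omega$ vanishes along the perturbed leaf, divide out $\e$, and apply the holomorphic implicit function theorem to the leading term $-I(u)$ using $I(u_0)=0$, $I'(u_0)\neq 0$. The resulting simple zero of the displacement is an isolated fixed point of $\Pmap$, hence a $1$-fold limit cycle by Definition~\ref{limitcycle}, and $\delta^\e\to\delta_{u_0}$ by the joint continuity of the lift.

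The one place you work harder than necessary is the identification $\gamma(u,0)=\delta_u$. You do not need the lift to coincide pointwise with $\delta_u$; it suffices that the two loops be homologous on $S_u$, because the restriction $\omega|_{S_u}$ is automatically closed: $d\omega=(B_x-A_y)\,dx\wedge dy$ is a holomorphic $(2,0)$-form and its pull-back to the one-complex-dimensional leaf $S_u$ vanishes identically. Hence $\int_\gamma\omega$ depends only on the class of $\gamma$ in $H_1(S_u)$. For $u$ near $u_0$, both $\gamma(u,0)$ and $\delta_u$ lie in the annulus $S_u\cap N(A)$ and project to generators of its fundamental group, so they are homologous and $\int_{\gamma(u,0)}\omega=I(u)$ directly. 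This disposes of both alternatives you propose: the tailored product structure on $N(A)$ is unnecessary, and the Gelfand--Leray differentiation is redundant once the homology statement is in hand, since closedness already gives equality of periods without differentiating in $u$.
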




In contrast to 1-fold cycles, little is known about multi-fold
ones. We are going to answer some questions related to the case
$m>1.$ During a series of informal discussions, Ilyashenko
proposed the following questions in the spirit of Petrovskii and
Landis' works \cite{PL55} and \cite{PL57}: \vspace{2mm}

\noindent (1){\em Are there examples of polynomial families of
form (1) with Poncar\'e maps that have isolated periodic orbits of
arbitrary period $m>1$?} \vspace{1.5mm}

\noindent (2){\em In the case $m>1,$ what may happen to a
$\delta_0, m$-fold
 limit cycle when $\e$ approaches $0$?} \vspace{1.5mm}

\noindent (3) {\em Does a multi-fold limit cycle settle on a leaf
of $\Foliation^0(\CC)$ as $\e \to 0?$} \vspace{2mm}


The current article is an attempt to give answers to some of the
questions posed above. In order to do this, we construct a
Poincar\'e map on a large cross-section of the foliation
$\Fol(\CC),$ which we call \emph{a non-local Poincar\'e map}. We
show that a certain type of cycles of $\Fol(\CC)$ that generate a
periodic orbit of the Poincar\'e map, have representatives that
determine specific free homotopy classes of loops in an open
fibred subdomain of $\CC.$ The topology and fiber structure of
this subdomain is determined by $\Foliation^0(\CC).$ With the help
of the construction of the non-local Poincar\'e map we see that
the behavior of a multi-fold limit cycle is quite different from
the behavior of a 1-fold limit cycle as $\e$ tends to zero. By
Theorem \ref{thm-poncrit}, the latter always stays close to some
cycle from $\Foliation^0(\CC)$ and converges to it as $\e$
converges to zero. In contrast to the behavior of a $1-$fold limit
cycle, a multi-fold one tends either to escape from a very large
domain in $\CC$ when $\e$ approaches 0 or to change the homotopy
type of its representatives inside the fibred subdomain in $\CC$.
This phenomenon is called \emph{rapid evolution of the multi-fold
limit cycle}. We also give an explicit example of a polynomial
foliation of the form (\ref{basic}) with multi-fold limit cycles.

So far, the third question from the list above stays unanswered.
The information we have on rapid evolution reveals an interesting
insight. If the answer to that question is positive, then before a
multi-fold limit cycle can reach an algebraic leaf as $\e \to 0$,
its representatives should change their topological properties
somewhere along the way. This means that there is a possibility
that the cycle settles on a critical leaf of $\Foliation^0(\CC)$
or goes through one or several critical leaves of
$\Foliation^0(\CC),$ settling on a regular leaf. Since the
foliations are polynomial, they extend to foliations on $\cptwo.$
Thus, another possibility is an interaction with the line at
infinity.

\section{Main Results} \label{Section Main Thms}

\subsection{Preliminaries} \label{preliminaries}

In this section we define several fibred subdomains of the complex
plain that will play an important role in our investigation.


Let the polynomial $H : \CC \to \cc$ be of degree $n+1$ and have
the following two properties:

\noindent $\bullet$ it has $n^2$ non-degenerate critical points in
$\CC$ with $n^2$ different critical values $\Sigma=\{a_1, ...,
a_n^2\}$ in $\cc$ and

\noindent $\bullet$ the projective closures of its leaves
$S_u=H^{-1}(u)$ in $\cptwo$ are transverse to the line at
infinity.


We are ready to define the first subdomain of $\CC.$ We are going
to denote it by $E.$ Consider the punctured domain $B=\cc -
\Sigma$ and its preimage $E=H^{-1}(B).$ Clearly, $E$ is just $\CC$
with all critical leaves of $H$ removed. Choose $u_0 \in B$ and
denote $S_{u_0}=H^{-1}(0).$ Then the map $H : E \to B$ defines a
smooth locally trivial fibre bundle with fibres diffeomorphic to
$S_{u_0}$ \cite{AGV}, \cite{IY}. Denote by $\Fol(E)$ the
restriction of the foliation $\Fol(\CC)$ on $E$. In other words,
the leaves of $\Fol(E)$ are the intersections of the leaves of
$\Fol(\CC)$ with $E.$ For simplicity, we are going to drop the
notation for $E$ in $\Fol(E)$ and just write $\Fol$ instead of
$\Fol(E)$. Thus $\Fol=\Fol(E)$. When $\e=0,$ the restricted
foliation $\Foliation^0=\Foliation^0(E)$ consists of all leaves
from $\Foliation^0(\CC)$ with the exception of the critical ones.


Before we go on with the construction of the other subdomains, we
will need some facts concerning the topology of the fiber bundle
$H : E \to B.$ For each critical value $a_j \in \Sigma, j=1 ...
n^2$ consider a simple smooth path from $u_0$ to a small circle
around $a_j,$ so that the union of the path and the circle
provides us with a counter clockwise oriented loop $\gamma_j$
around $a_j$ based at $u_0.$ Also, suppose that for $i \neq j,
\gamma_i \cap \gamma_j=\{u_0\}.$ Then the homotopy classes of the
loops $\{\gamma_1, ..., \gamma_{n^2}\}$ define generators of the
fundamental group $\pi_1(B, u_0)$. For $u \in \gamma_j$ consider
the fiber $S_u.$ Then if the parameter $u$ starts form $u_0$ and
moves along the loop $\gamma_j$ until it comes back to $u_0$ then
the corresponding fibers $S_u$ will also make one turn around the
critical point $a_j$ starting and ending up at $S_{u_0}.$
According to Picard-Lefschetz's theory \cite{AGV} this procedure
gives rise to an isotopy class of maps (an element of the mapping
class group of $S_{u_0}$) with a representative
$\tilde{D}_{\gamma_j} : S_{u_0} \to S_{u_0}$ which is a Dehn twist
around a simple closed geodesic we denote by $\delta_j$ for
$j=1,..., n^2.$ Moreover, the Dehn twist can be chosen so that the
closed cylinder $supp(\tilde{D}_{\gamma_j}) \subset S_{u_0},$ on
which $\tilde{D}_{\gamma_j}$ acts non trivially, is very thin with
respect to the Poincar\'e metric on the fiber $S_{u_0}$ and
$supp(\tilde{D}_{\gamma_i}) \cap
supp(\tilde{D}_{\gamma_j})=\varnothing$ whenever $\delta_i \cap
\delta_j=\varnothing.$ Then on the closure of the complement
$S_{u_0} - supp(\tilde{D}_{\gamma_j})$ the map
$\tilde{D}_{\gamma_j}$ acts like the identity map. The homotopy
classes represented by the loops $\{\delta_1,...,\delta_{n^2}\}$
give rise to a system of vanishing cycles on $S_{u_0},$ which can
serve as a basis of the first homology group on $S_{u_0}$
\cite{AGV}, \cite{I69}. Also, as a sphere with $n^2+1$ points
removed, $B$ has the structure of a Riemann surface with a
hyperbolic metric. For each cusp $a_j \in \Sigma$ let us choose a
cut $l_j$ connecting $\alpha_j$ to $\infty$ so that no two such
cuts intersect. For simplicity, we may think that each cut $l_j$
is geodesic and that $u_0$ is chosen so that it does not lie on
any of the cuts. Later, in Section \ref{global_map_construction}
we are going to find one possible way for those cuts to be chosen.


Now we are ready to define the subdomain $\Edelta \subset E$. Fix
a point $p_0 \in S_{u_0}$ and some primitive element $\Delta_0$ of
the fundamental group $\pi_1(S_{u_0}, p_0).$ Choose a
representative $\delta_0 \subset S_{u_0}$ of $\Delta_0$ such that
$\delta_0 \cap supp(\tilde{D}_{\gamma_j}) = \varnothing$ if the
geometric intersection index $\delta_0 \cdot \delta_j = 0.$ Define
$J(\delta_0)=\{j=1, ...,n^2\, |\, \delta_0 \cdot \delta_j \neq
0\}$ to be the set of those indices for which the geometric
intersection index of the corresponding vanishing cycle and
$\delta_0$ is non zero and consider the domain $\Bdelta = B -
(\sqcup_{j \in J(\delta_0)}\, l_j) \subset \cc$ and $\Edelta =
H^{-1}(\Bdelta) \subset \CC.$


Finally, we construct the rest of the domains. For a small number
$\tilde{\rho}>0,$ let us consider small disjoined closed discs
$B_1(\tilde{\rho}), ..., B_{n^2}(\tilde{\rho})$ of radius
$\tilde{\rho}$ in $\cc$ around the points $\alpha_1, ...,
\alpha_{n^2}$ respectively and not containing the point $u_0.$ Let
$B_{\infty}(\tilde{\rho})$ be a very large disc centered at the
origin and of radius $1/\tilde{\rho}$ so that it contains all of
the small ones and the point $u_0$. Then one can define the
domains $\Cdelta(\tilde{\rho}) = \Bdelta -
\bigl(B_{\infty}(\tilde{\rho}) \sqcup \bigl(\sqcup_{j=1}^{n^2}
B_j(\tilde{\rho})\bigr)\bigr)$ and $A(\tilde{\rho}) = B -
\bigl(B_{\infty}(\tilde{\rho}) \sqcup \bigl(\sqcup_{j=1}^{n^2}
B_j(\tilde{\rho})\bigr)\bigr).$ Fix four small positive numbers
$\rho_0, \rho_1, \rho^{\prime}_0$ and $\rho^{\prime}_1$,
satisfying the inequalities $\rho_0 > \rho_1 > \rho^{\prime}_0 >
\rho^{\prime}_1
> 0.$ Denote by $\Cdelta$ and $\Cdeltaprime$ the domains
$\Cdelta(\rho_0)$ and $\Cdelta(\rho_0^{\prime})$, respectively.
Also, denote by $A$ and $\Aprime$ the domains $A(\rho_1)$ and
$A(\rho_1^{\prime})$, respectively. Now, consider the preimages
$\Ecdelta = H^{-1}(\Cdelta)$ and $\EAprime = H^{-1}(\Aprime).$


\subsection{Main Theorems and Statements}

Before stating the main results of this work, we are going to give
another definition for a multi-fold cycle. It is of a more
topological nature and, as point 4 from Theorem \ref{thm-pmap}
shows, in certain situations Definition \ref{mfold} and the new
definition coincide.

\begin{Def} \label{Definition_delta_m_fold_vertical}

A loop contained in $\Edelta$ is called $\delta_0, m$-fold
vertical provided that it is free homotopic to $\delta_0^m$ inside
$\Edelta$. A marked complex cycle of $\Fol$ is called $\delta_0,
m$-fold vertical provided that it has a $\delta_0, m$-fold
vertical representative contained in $\Edelta.$

\end{Def}

The justification for this definition stems from the proposition
that follows.

\begin{prop} \label{TopologyOfCycles} Let $\Fol$ have a marked
complex cycle $(\Delta,q)$ with a $\delta_0,m-$fold vertical
representative $\delta$ contained in $\Edelta.$ \vspace{1.5mm}

\noindent {\em 1.} If $\delta$ is free homotopic inside $\Edelta$
to another loop $\delta_0^{'} \subset S_{u_0},$ then
$\delta_0^{'}$ is free homotopic to $\delta_0^m$ on the fiber
$S_{u_0}.$ \vspace{1.5mm}

\noindent {\em 2.} If $\delta^{\prime}$ is another representative
of $(\Delta,q)$ contained in $\Edelta,$ then $\delta^{\prime}$ is
$\delta_0,m-$fold vertical.
\end{prop}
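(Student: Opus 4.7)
The plan is to treat the two parts separately, both reducing to manipulations with the fundamental groups of the two fibrations $H:\Edelta\to\Bdelta$ and $H:E\to B$.

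For part 1, I first use transitivity of free homotopy in $\Edelta$ to reduce to the following statement: two loops in the fibre $S_{u_0}$ that are freely homotopic inside $\Edelta$ are already freely homotopic in $S_{u_0}$, provided one of them is $\delta_0^m$. Since $\Bdelta$ is an open connected Riemann surface (so $\pi_2(\Bdelta)=0$) and the generic fibre $S_{u_0}$ is connected, the long exact homotopy sequence of the bundle $H:\Edelta\to\Bdelta$ collapses to the short exact sequence
\[
1\longrightarrow\pi_1(S_{u_0},p_0)\longrightarrow\pi_1(\Edelta,p_0)\longrightarrow\pi_1(\Bdelta,u_0)\longrightarrow 1.
\]
In any such group extension, two elements of the normal subgroup are conjugate in the middle group if and only if their conjugacy classes lie in the same orbit of the outer monodromy action of $\pi_1(\Bdelta)$ on the set of conjugacy classes of $\pi_1(S_{u_0})$. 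Because the cuts removed from $B$ are exactly those indexed by $J(\delta_0)$, the group $\pi_1(\Bdelta)$ is freely generated by the loops $\gamma_i$ with $i\notin J(\delta_0)$; by Picard--Lefschetz each $\gamma_i$ acts by the Dehn twist $\tilde D_{\gamma_i}$, whose support has been arranged in the preliminaries to be disjoint from $\delta_0$, so $\tilde D_{\gamma_i}$ fixes $\delta_0^m$ pointwise. Thus the monodromy orbit of the conjugacy class of $\delta_0^m$ is a single point, which forces $\delta_0^{\prime}$ to be conjugate to $\delta_0^m$ already inside $\pi_1(S_{u_0})$.

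For part 2, I compare the two fibrations via the commutative diagram of short exact sequences
\[
\begin{array}{ccccccccc}
1 & \to & \pi_1(S_{u_0}) & \to & \pi_1(\Edelta) & \to & \pi_1(\Bdelta) & \to & 1 \\
  &     & \Big\Vert      &     & \downarrow     &     & \downarrow     &     &   \\
1 & \to & \pi_1(S_{u_0}) & \to & \pi_1(E)       & \to & \pi_1(B)       & \to & 1
\end{array}
\]
induced by $\Edelta\hookrightarrow E$. The right vertical arrow is the inclusion of the free subgroup generated by $\{\gamma_i:i\notin J(\delta_0)\}$ into the free group $\pi_1(B)$ on $\{\gamma_1,\dots,\gamma_{n^2}\}$, hence is injective, and the injectivity form of the five lemma then gives that $\pi_1(\Edelta,q)\to\pi_1(E,q)$ is injective as well. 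Now $\delta$ and $\delta^{\prime}$ represent the same element $\Delta\in\pi_1(L,q)$, so they are based homotopic on the leaf $L$; via $L\hookrightarrow E$ they define the same element of $\pi_1(E,q)$, and by the injectivity just established they define the same element of $\pi_1(\Edelta,q)$. In particular $\delta$ and $\delta^{\prime}$ are freely homotopic in $\Edelta$, and combined with the hypothesis that $\delta$ is $\delta_0,m$-fold vertical this yields that $\delta^{\prime}$ is freely homotopic to $\delta_0^m$ inside $\Edelta$, which is exactly the claim.

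The main obstacle is not any single calculation but the assembly of the topological inputs: one must verify that the homotopy sequence of $H:\Edelta\to\Bdelta$ collapses to a short exact sequence (using $\pi_2(\Bdelta)=0$ together with connectedness of the fibre), that the supports of the Picard--Lefschetz Dehn twists for $i\notin J(\delta_0)$ really are disjoint from $\delta_0$ (ensured by the choice of $\delta_0$ with $\delta_0\cap\mathrm{supp}(\tilde D_{\gamma_j})=\varnothing$ whenever $\delta_0\cdot\delta_j=0$), and that $\pi_1(\Bdelta)$ does embed into $\pi_1(B)$ as the free subgroup on the chosen subset of generators. Once these three ingredients are in place, both statements follow from the standard description of conjugacy in a group extension and the injectivity form of the five lemma.
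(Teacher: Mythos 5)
Your proof is correct, and it takes a genuinely different route from the paper's. The paper argues upstairs in the covering $\Pi : \Disc \times S \to E$ built in Theorem \ref{bundle_topology}: it lifts the relevant homotopies, locates the lifted loops in the component $\Bdeltahat \times S$ of $\Pi^{-1}(\Edelta)$, and extracts the conclusion by projecting with $pr_S$ and invoking the deformation retractions $R, R^{\prime}$ of $\Bdeltahat \times S$ and $\Disc \times S$ onto $\{z_0\} \times S$. You instead work entirely downstairs with the short exact homotopy sequences of the two fibre bundles $H : \Edelta \to \Bdelta$ and $H : E \to B$, reducing part 1 to the standard characterization of conjugacy of kernel elements in a group extension via the outer monodromy action, and part 2 to injectivity of $\pi_1(\Edelta) \to \pi_1(E)$ via the five lemma. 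The essential geometric inputs are the same in both treatments, namely that $\pi_1(\Bdelta)$ is free on the $\gamma_i$ with $\delta_0 \cdot \delta_i = 0$ (equivalently, that $\Bdeltahat$ is a simply connected component stabilized by exactly that subgroup of $\Gamma$) and that the corresponding Dehn twists fix $\delta_0$, but your algebraic rendering of them is cleaner and makes Proposition \ref{TopologyOfCycles} logically independent of the covering-space machinery of Theorem \ref{bundle_topology}, whereas the paper's version integrates more tightly with the $\Disc \times S$ picture that it needs throughout (including to construct the non-local Poincar\'e map). One small remark worth flagging: the paper's text defines $\Gamma_0 = \langle\gamma_j : j \in J(\delta_0)\rangle$ but then uses $\Gamma_0$ as the stabilizer of $\Bdeltahat$ and as the subgroup whose generating Dehn twists fix $\delta_0$; both of those roles belong to the complementary subgroup $\langle\gamma_j : j \notin J(\delta_0)\rangle$, so your identification of $\pi_1(\Bdelta)$ with the free group on $\{\gamma_i : i \notin J(\delta_0)\}$ is the internally consistent reading of the setup.
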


As we can see, a representative of a marked complex cycle can
belong to only one free homotopy class in $\Edelta.$ Moreover, any
other representative contained in $\Edelta$ belongs to the same
class.

The first main result of this paper is the construction of a large
cross-section of the foliations from the family (\ref{basic}) and
a Poincar\'e map defined on it. The result also shows that there
is a connection between the periodic orbits of the Poincr\'e map
and some topological properties of the corresponding multi-fold
cycles inside the fibered domain $\Edelta.$


\begin{thm}\label{thm-pmap}
There exists a surface $B_{p_0},$ embedded
in $E,$ diffeomorphic to $B$ and passing through $p_0,$ 
such that $B_{p_0}$ intersects transversely each noncritical leaf
of $\Foliation^0$ at exactly one point. Moreover, for a small
enough $r>0,$ if $\e$ is contained in a disc of radius $r$ then
the following statements are true:

\vspace{1.5mm}

\noindent {\em 1.} The leaves of $\Fol$ are transverse to
$\Apzeroprime \subset B_{p_0},$ where $H(\Apzeroprime) = \Aprime.$
\vspace{1.5mm}

\noindent {\em 2.} Let $\Cpzeroprime \subset B_{p_0}$ be such that
$H(\Cpzeroprime) = \Cdeltaprime.$ Then there exists a Poincar\'e
map $\Pmap : \Cpzeroprime \to \Apzeroprime$ associated with the
foliation $\Fol$ and a complex structure on $B_{p_0}$ so that
$P_{\delta_0, \e}$ is holomorphic.\vspace{1.5mm}

\noindent {\em 3.} If $\Pmap$ has a periodic orbit of period $m$
in $\Cpzeroprime$ then the foliation $\Fol$ has a marked complex
cycle $(\Delta_{\e}, q_{\e})$ with a base point $q_{\e}$ belonging
to $\Cpzeroprime.$ Moreover, the cycle has a representative
$\delta_{\e}$ contained in $\EAprime$ and passing through the
points of the $m-$periodic orbit.\vspace{1.5mm}

\noindent {\em 4.}  If $\delta^{\prime}_{\e}$ is an arbitrary
representative of the marked complex cycle $(\Delta_{\e}, q_{\e})$
from point 3, then $\delta^{\prime}_{\e}$ is contained in
$\Edelta$ and is $\delta_0,m-$fold vertical if and only if its
image $H(\delta^{\prime}_{\e})$ is contained in $\Bdelta$ and is
free homotopic to a point inside $\Bdelta.$ 

\end{thm}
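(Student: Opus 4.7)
The overall plan is as follows. First, build the cross-section $B_{p_0}$ as the image of a smooth section $\sigma : B \to E$ of the locally trivial fibration $H : E \to B$ with $\sigma(u_0) = p_0$. Such a section exists because $B$ is homotopy equivalent to a wedge of $n^2$ circles (a $1$-complex) and the fiber $S_{u_0}$ is connected, so the primary obstruction vanishes and no higher obstructions appear. By construction $B_{p_0}$ meets every noncritical leaf of $\Foliation^0$ transversely in a unique point. For point~1, observe that $\overline{A}\subset \Aprime$, so $\overline{\Apzeroprime}$ sits inside a slightly larger open subset of $B_{p_0}$ on which transversality to $\Foliation^0$ holds at $\e=0$. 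Transversality is an open condition and $\Fol$ depends holomorphically on $\e$, so it persists on $\Apzeroprime$ for all $\e$ in a sufficiently small disc.

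For point~2, extend the local Poincar\'e construction from the Introduction to the non-local cross-section. Given $q \in \Cpzeroprime$ over $u = H(q) \in \Cdeltaprime \subset \Bdelta$, parallel-transport $\delta_0$ along any path in $\Bdelta$ from $u_0$ to $u$; by the Picard--Lefschetz description of the monodromy together with the hypothesis that $\delta_0 \cap \mathrm{supp}(\tilde{D}_{\gamma_j}) = \varnothing$ whenever $j \notin J(\delta_0)$, this transport is well-defined up to isotopy on $S_u$. Lift the resulting loop $\delta_0(u) \subset S_u$ based at $q$ to the leaf of $\Fol$ through $q$, then reproject the endpoint back onto $B_{p_0}$ using the local transversality: this yields $\Pmap(q) \in \Apzeroprime$. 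Holomorphicity follows from pulling back the complex structure on $B$ via $H$ to $B_{p_0}$ together with the holomorphic dependence of $\Fol$ on $\e$. For point~3, concatenate the $m$ consecutive Poincar\'e lifts attached to an $m$-periodic orbit $q_0, \ldots, q_{m-1}$: the result is a loop $\delta_{\e}$ on the leaf of $\Fol$ through $q_{\e} = q_0$, contained in $\EAprime$ since each lift projects into $\Aprime$ and nontrivial in the leaf's fundamental group by the standard correspondence between $m$-periodic orbits of the Poincar\'e map and marked complex cycles.

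Point~4 is the topological core. The direction (A)$\Rightarrow$(B) is immediate from functoriality: $\delta^{\prime}_{\e}\subset \Edelta$ forces $H(\delta^{\prime}_{\e})\subset \Bdelta$, and a free homotopy $\delta^{\prime}_{\e}\simeq\delta_0^m$ in $\Edelta$ projects to a free homotopy of $H(\delta^{\prime}_{\e})$ to the constant loop at $u_0$ in $\Bdelta$. For the converse, $H(\delta^{\prime}_{\e})\subset\Bdelta$ immediately gives $\delta^{\prime}_{\e}\subset H^{-1}(\Bdelta)=\Edelta$. To establish $\delta^{\prime}_{\e}\simeq \delta_0^m$ in $\Edelta$, apply the homotopy lifting property of the smooth fibration $H:\Edelta\to \Bdelta$ to a free null-homotopy of $H(\delta^{\prime}_{\e})$ inside $\Bdelta$; this produces a free homotopy in $\Edelta$ from $\delta^{\prime}_{\e}$ to a loop $\eta\subset S_{u^{*}}$ in a single fiber, and a further transport within $\Edelta$ along a path in $\Bdelta$ yields a loop $\eta_0\subset S_{u_0}$. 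To identify the free homotopy class of $\eta_0$ on $S_{u_0}$ with that of $\delta_0^m$, use that $\delta^{\prime}_{\e}$ and the $\delta_{\e}$ from point~3 represent the same element of the fundamental group of their common $\Fol$-leaf, together with an explicit free homotopy from $\delta_{\e}$ to $\delta_0^m$ obtained by collapsing each horizontal ``slide'' arc of the Poincar\'e lift inside $B_{p_0}$ and each individual loop-lift onto its $\delta_0$-counterpart in the corresponding fiber. Since the monodromy of $\pi_1(\Bdelta)$ on $\pi_1(S_{u_0})$ fixes $\delta_0^m$ (each generating Dehn twist $\tilde{D}_{\gamma_j}$ with $j\notin J(\delta_0)$ has support disjoint from $\delta_0$), the transport introduces no ambiguity on this class and the identification descends to an equivalence in $\Edelta$.

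The most delicate step is the final identification in point~4. The chain $\delta^{\prime}_{\e}\simeq \delta_{\e}\simeq \delta_0^m$ takes place a priori in the larger ambient space $E$, whose $\pi_1$ admits the Dehn twists $\tilde{D}_{\gamma_j}$ for $j\in J(\delta_0)$; those twists do not fix $\delta_0$, so if one only had the equivalence in $E$, the class of $\eta_0$ could differ from $\delta_0^m$ by such a twist and the desired equality in $\Edelta$ would fail. The crucial move is to carry out the null-homotopy lift entirely within $\Edelta$ from the outset, which confines the ``horizontal'' motion to loops in $\Bdelta$ and thereby rules out the problematic twists. A secondary, more technical issue is the global construction of $\Pmap$ on the non-local cross-section and the verification of its holomorphicity, which demands a compatible choice of $\sigma$ and of the transverse complex structure near each point of $\Cpzeroprime$.
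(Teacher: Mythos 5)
Your proposal takes a genuinely different route from the paper, but it has a gap that is not closed.

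The paper's central tool is the covering map $\Pi : \Disc \times S \to E$ of Theorem~\ref{bundle_topology}, whose deck group $\hat\Gamma = \langle (\gamma_j, D_{\gamma_j}) \rangle$ respects both the vertical fibers $\{z\}\times S$ and the horizontal slices $\Disc \times \{p\}$. The cross-section $B_{p_0}$ is then the projection of a horizontal slice $\Disc \times \{p_0\}$ (invariant because $p_0$ lies outside all twist supports), the Poincar\'e map is built as a $\hat\Gamma_0$-equivariant lift $\Pmaphat$ on $\Disc\times\{p_0\}$ covering $\delta_0$ under $pr_S$ and then descends, and the complex structure is induced from flow-box charts of the holomorphic foliation. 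Your proposal replaces all of this with: an obstruction-theoretic section for $B_{p_0}$, parallel transport in $\Bdelta$ for $\Pmap$, pull-back of the complex structure from $B$, and a fibration-lifting argument for point~4. This is a legitimate alternative outline in spirit, but notice that pulling back the complex structure from $B$ via $H$ is \emph{not} what the paper does and would not automatically make $\Pmap$ holomorphic; the paper uses the transverse complex structure coming from the foliation itself (Lemmas~\ref{lemma complex atlas}, \ref{Lemma Complex Pmap}). That part of your point~2 would need to be replaced.

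The genuine gap, however, is in the converse direction of point~4. You correctly obtain a free homotopy $\delta'_\e \simeq \eta_0$ inside $\Edelta$, with $\eta_0 \subset S_{u_0}$. You then try to identify the free-homotopy class of $\eta_0$ with $\delta_0^m$ via the chain $\delta'_\e \simeq \delta_\e \simeq \delta_0^m$. But that chain takes place in $E$ (the $\Fol$-leaf carrying the based homotopy $\delta'_\e \simeq \delta_\e$ is not contained in $\Edelta$, and the explicit homotopy $\delta_\e \simeq \delta_0^m$ lives in $\EAprime$, which is also not contained in $\Edelta$). Combined with $\delta'_\e \simeq \eta_0$ in $\Edelta \subset E$, all this gives is $\eta_0 \simeq \delta_0^m$ freely in $E$, hence only that $\eta_0$ is free homotopic on $S_{u_0}$ to $D_\gamma(\delta_0^m)$ for \emph{some} $\gamma \in \Gamma$ --- not necessarily $\gamma \in \Gamma_0$. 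If $\gamma \notin \Gamma_0$ then $D_\gamma(\delta_0^m)$ is not free homotopic to $\delta_0^m$ on $S_{u_0}$, and the conclusion fails. The claim in your final paragraph that ``carrying out the null-homotopy lift entirely within $\Edelta$'' rules out the problematic twists is not substantiated: confining the lifted null-homotopy to $\Edelta$ controls only the homotopy from $\delta'_\e$ to $\eta_0$, not the comparison of $\eta_0$ with $\delta_0^m$. The paper closes exactly this gap through the covering space: under the hypothesis that $H(\delta'_\e)$ is null-homotopic in $\Bdelta$, the $pr_\Disc$-projection of the lift $\hat\delta'_\e$ is a loop, and since $\Gamma$ acts freely on $\Disc$, the deck transformation relating the endpoints of $\hat\delta'_\e$ must be the identity; this forces the lifted orbit of $\Pmaphat$ to be genuinely $m$-periodic with $z_1 \in \Cdeltaprimehat \subset \Bdeltahat$, and then Lemma~\ref{Lemma_periodic_orbits_cycles_lifted}, point~2, pins down $\gamma \in \Gamma_0$. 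Without some substitute for this mechanism, your argument does not establish the converse direction of point~4.
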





What we gain with this theorem is that for a small enough $\e$ we
are able to construct a Poincar\'e transformation along $\delta_0$
defined on a very large domain. In this way we can encode a lot of
information about a big portion of the perturbed foliation $\Fol.$
In particular, it allows us to keep track of the behavior of
\emph{continuous families} of $\delta_0, m-$fold limit cycles with
respect to the parameter $\e.$ In addition, Theorem \ref{thm-pmap}
reveals a link between the dynamical notion of a multi-fold cycle,
as given by Definition \ref{mfold} and the topological point of
view introduced in Definition
\ref{Definition_delta_m_fold_vertical}. Thus, there exists a
strong connection between the dynamical properties of marked
complex cycles, in terms of periodic orbits of the corresponding
Poincar\'e map, and the topological properties of these cycles, in
terms of free homotopy classes.

Next, we explain the notion of a \emph{continuous family} of
$\delta_0, m-$fold limit cycles with respect to a parameter $\e.$
\begin{Def}
A family $\{(\Deltae, q_{\e})\}_{\e}$ of limit cycles of $\Fol$ is
called continuous with respect to $\e,$ relative to an embedded in
$E$ surface $L,$ if there exists a continuous family of
representing loops from $\Deltae,$ so that the base point $q_{\e}$
varies continuously on $L$.
\end{Def}
The next main result shows that for $m>1,$ a continuous family of
$m-$fold limit cycles tends to escape from a very large domain in
$\CC$, namely $\Ecdelta.$ We refer to this phenomenon as
\emph{rapid evolution} of the multi-fold family. This behavior is
completely different from the behavior of a $1-$fold family.
According to Theorem 1, the latter always stays in a neighborhood
of an algebraic leaf of $\Foliation^0$ as $\e$ approaches 0.


Fix a positive integer $m>1$ and let $D_r(0)=\{\e \in \cc
\,\,:\,\,|\e| \leq r\}$ for $r>0.$ 
We claim that as long as $r>0$ is chosen small enough, rapid
evolution of marked complex cycles occurs in the following form:

\begin{thm}\label{rapid-evolution}
Assume that for some $\e_0 \in D_r(0)$ the foliation
$\Foliation^{\e_0}$ has a $\delta_0,m$-fold vertical limit cycle
which corresponds to an $m$-periodic orbit of $P_{\delta_0, \e_0}$
on the cross-section $C_{p_0}^{\prime}.$ Also, assume that the
cycle has a $\delta_0, m-$fold vertical representative contained
in $\Ecdelta$. Then, for any curve $\eta$ connecting $\e_0$ to $0$
and embedded in $D_r(0)$, there exists a relatively open subset
$\sigma$ of $\eta,$ such that the cycle extends on $\sigma$ to a
continuous family $\{(\Delta_{\e}, q_{\e})\}_{\e \in \sigma}$ of
marked cycles of $\Fol$. Moreover, as $\e$ moves along $\sigma$ in
the direction of $0,$ it reaches a value $\estar \in \sigma$ such
that for any $\e \in \sigma$ past $\estar$ no $\delta_0, m-$fold
vertical representative of $(\Delta_{\e}, q_{\e})$ will be
contained in $\Ecdelta$ anymore. 
\end{thm}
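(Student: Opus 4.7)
The strategy is to convert the assumed cycle into an isolated zero of a holomorphic function by means of Theorem~\ref{thm-pmap}, continue this zero along $\eta$ as far as possible, and then read off the topological conclusion from the fact that at $\e=0$ the Poincar\'e map collapses to the identity. By hypothesis, the assumed $m$-periodic orbit makes $q_{\e_0}\in\Cpzeroprime$ an isolated zero of the holomorphic function $f(\e,z)=\Pmap^m(z)-z$ restricted to $\{\e_0\}\times\Cpzeroprime$. Since $\Pmap$ depends holomorphically on the pair $(\e,z)$, a standard Rouch\'e/Weierstrass preparation argument on a small polydisc centered at $(\e_0,q_{\e_0})$ produces a (possibly multi-valued) analytic branch $\e\mapsto q_\e$ of fixed points of $\Pmap^m$ near $\e_0$. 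I would take $\sigma$ to be a relatively open subset of $\eta$ containing $\e_0$ along which this branch can be continuously followed inside $\Cpzeroprime$. Point~3 of Theorem~\ref{thm-pmap} then furnishes, for each $\e\in\sigma$, a representative $\delta_\e\subset\EAprime$ of a marked cycle $(\Deltae,q_\e)$ passing through the iterates of $q_\e$, and these loops vary continuously because the Poincar\'e transport that defines them does.

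The core obstruction is that at $\e=0$ the map $P_{\delta_0,0}$ is the identity on $\Cpzeroprime$. Indeed, $\Bpzero$ meets each noncritical leaf of $\Foliation^0$ at exactly one point, so the lift of $\delta_0$ to the leaf through any $q\in\Cpzeroprime$ is a loop on that leaf returning to $q$; hence $f(0,z)\equiv 0$ on $\Cpzeroprime$ and $\{0\}\times\Cpzeroprime$ sits inside the zero variety of $f$ as a full irreducible component. An isolated zero for $\e\neq 0$ therefore cannot reach $\e=0$ as an isolated fixed point: as $\e$ moves along $\sigma$ toward $0$, the branch $q_\e$ must either exit $\Cpzeroprime$, collide with another branch and lose isolation, or approach $\{0\}\times\Cpzeroprime$ tangentially, all before $\e$ attains the value $0$.

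Finally, I would translate each of these analytic failure modes into the topological conclusion via point~4 of Theorem~\ref{thm-pmap}: existence of a $\delta_0,m$-fold vertical representative of $(\Deltae,q_\e)$ inside $\Ecdelta$ is equivalent to $H(\delta_\e)\subset\Cdelta$ being null-homotopic in $\Bdelta$. Define $\estar$ as the value of $\eta$, in the direction of $0$, beyond which this equivalence fails; the claim is that $\estar\in\sigma$ is attained strictly before $\e=0$, and that past $\estar$ the projection $H(\delta_\e)$ either exits $\Cdelta$ or acquires nontrivial winding around some critical value $a_j$ with $j\in J(\delta_0)$, destroying null-homotopy in $\Bdelta$. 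The main obstacle I anticipate lies precisely in this last translation: showing that the analytic loss of isolation or exit of $q_\e$ from $\Cpzeroprime$ forces a \emph{topological} exit from $\Ecdelta$ or a jump in the free homotopy class of $\delta_\e$, rather than a quiet vanishing of the cycle. This requires quantitative control over how the Poincar\'e lift of $\delta_0^m$ deforms as $q_\e$ approaches the boundary of the cross-section, and will lean on the geometry of the cuts $l_j$ and the thin Dehn-twist supports supplied by the Picard--Lefschetz monodromy.
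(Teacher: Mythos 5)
You correctly identify the two structural pieces of the argument --- convert the cycle to an isolated zero of $\Pmap^m(z)-z$, continue it analytically along $\eta$ via Weierstrass preparation, and extract the escape statement from the degeneration $P_{\delta_0,0}=\mathrm{id}$ --- and your handling of the first piece (constructing $\sigma$) is essentially what the paper does. But your treatment of the second piece has a genuine gap. The heuristic you offer, that $\{0\}\times\Cpzeroprime$ is a component of the zero variety so the branch $q_\e$ ``cannot reach $\e=0$ as an isolated fixed point,'' does not imply the theorem. A branch $q_\e$ of isolated fixed points can converge to a limit $q_0\in\Cpzero$ as $\e\to 0$ without ever leaving $\Ecdelta$ --- this is exactly what Pontryagin's criterium (Theorem~\ref{thm-poncrit}) says happens when $m=1$. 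What is needed is an argument specific to $m>1$, and it is absent from the proposal. The paper's actual mechanism is a counting argument: suppose for contradiction that the $m$-periodic orbit stays in a compact subregion as $\e\to 0$; extract a limit point $x^*$, which is a fixed point of $\Pmaptildzero=\mathrm{id}$; write $\Pe(\zeta)=\zeta+\e^{l}I(\zeta)+\e^{l+1}R(\zeta,\e)$ with $I\not\equiv 0$, so that $\Pe^m(\zeta)=\zeta+\e^{l}mI(\zeta)+\e^{l+1}R_{(m)}(\zeta,\e)$. Dividing $\Pe(\zeta)-\zeta=0$ and $\Pe^m(\zeta)-\zeta=0$ by $\e^{l}$ and applying Rouch\'e's theorem on a small circle about $x^*$, both must have the same number of zeros as $I(\zeta)=0$; but for $\e\neq 0$ the second equation carries the $m$-periodic orbit as at least $m$ zeros beyond the fixed points of $\Pe$. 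This contradiction is the heart of the proof and has no analogue at $m=1$. Without it you cannot conclude that the family leaves $\Ecdelta$ strictly before $\e$ reaches $0$.

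There is a second, smaller gap: you correctly anticipate that translating the analytic failure into the topological conclusion is delicate, but the route you sketch (``quantitative control over how the Poincar\'e lift deforms,'' ``geometry of the cuts'') is vaguer than what is needed. The paper resolves this by carrying out the entire continuation on the covering $\Disc\times S$ rather than on $\Cpzeroprime$ directly. It continues the periodic orbit of $\Pmaphat$ on the $\hat\Gamma_0$-invariant cross-section $\Xdeltaprimehat\times\{p_0\}$, which is strictly larger than $\Cdeltaprimehat\times\{p_0\}$. When the orbit leaves $\Ahat\times\{p_0\}$ at $\estar$, it does not simply vanish: the base point of the projected cycle either moves to a point of $\Apzeroprime$ outside $\Cpzero$ (so every representative passes through a point outside $\Ecdelta$), or its lift lands in a translate $\gamma(\Cdeltahat)\times\{p_0\}$ with $\gamma\in\Gamma-\Gamma_0$, and the earlier lemma relating lifted orbits to cycles then gives that every representative in $\Edelta$ is $\Dgamma(\delta_0),m$-fold vertical, which is not $\delta_0,m$-fold vertical. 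This case split is what makes point~4 of Theorem~\ref{thm-pmap} actually bite, and you would need to reproduce it rather than gesture at it.
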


To summarize the conclusions of Theorem \ref{rapid-evolution}, a
limit $\delta_0,m$-fold vertical cycle of the perturbed foliation,
represented by a periodic orbit of the corresponding Poincar\'e
map, gives rise to a continuous family defined on $\sigma$.
Eventually, as $\e$ goes in the direction of $0$ on $\sigma,$ all
representatives of the cycles from that family not only leave the
domain $\Ecdelta$ but they do not come back to it as multi-fold
vertical cycles of the same topological type. If they do come
back, their topological characteristics $\delta_0$ or $m$ are
changed.


Before we continue with the exposition, we are going to make a
small comment. Denote by $\delta^{\prime}$ the representative of
the $\delta_0,m$-fold vertical cycle from Theorem
\ref{rapid-evolution} contained in the domain $\Ecdelta$ when
$\e=\e_0.$ Notice that as soon as its image $H(\delta^{\prime})$
is null-homotopic in $\Bdelta,$ the loop $\delta^{\prime}$ is
forced by point 4 from Theorem \ref{thm-pmap} to be free homotopic
inside $\Edelta$ to $\delta_0^m$ and cannot belong to any other
free homotopy class in $\Edelta$. Therefore, the fact that
$\Pmapezero$ is the Poincar\'e map with respect to $\delta_0$, is
directly related to the fact that $\delta^{\prime}$ is
$\delta_0,m-$vertical. Moreover, as Proposition
\ref{TopologyOfCycles} suggests, any other representatives of the
same marked cycle, contained in $\Edelta,$ will also be
$\delta_0,m-$fold vertical.


We are going to give short outlines of the proofs of the above two
results. To verify the claims of Theorem \ref{thm-pmap}, one can
use the pull back of the bundle $E$ over the universal covering
disc of the surface $B.$ In this way, a covering bundle with an
action of a deck group is obtained, and we can smoothly trivialize
that bundle (notice the disc is contractible) so that the group
will map both vertical and horizontal fibers to vertical and
horizontal fibers, respectively. In fact, the group preserves the
horizontal disc fibers passing through $S_{u_0} - \cup_{j=1}^{n^2}
supp(\tilde{D}_{\gamma_j})$ because on the vertical fibers it is
generated by the Dehn twists $\{\tilde{D}_{\gamma_j} \, : \,
j=1...n^2\},$ which act trivially outside
$supp(\tilde{D}_{\gamma_j}).$ In particular, if we take the
horizontal disc passing through $p_0$ and project it to $E,$ we
will obtain the desired cross-section $B_{p_0}.$ If we pull back
the foliation in the trivial bundle then we obtain a foliation
invariant with respect to the action of the deck group. The direct
product structure on the trivial covering bundle allows us to lift
$\delta_0$ on the leaves of the pulled back foliation so that we
get a Poincar\'e map $\hat{P}_{\delta_0, \e}$ on the disc. The
invariance of the foliation implies the relation $\gamma \circ
\hat{P}_{\delta_0, \e} =
\hat{P}_{\tilde{D}^{-1}_{\gamma}(\delta_0), \e} \circ \gamma$ for
all $\gamma \in \pi_1(B, u_0).$ But for $\delta_j \cdot \delta_0 =
0$ we have $\gamma_j \circ \hat{P}_{\delta_0, \e} =
\hat{P}_{\delta_0, \e} \circ \gamma_j$ because
$\tilde{D}_{\gamma_j}(\delta_0)=\delta_0.$ Projecting everything
back to $E,$ we get the desired cross-sections and Poincar\'e map.
By construction the map branches over the cuts of $\Bdelta.$ The
complex structure on $\Apzeroprime$ is defined as the transverse
structure to the leaves of $\Fol$ and extended by $0$ on $B_{p_0}
- \Apzeroprime.$ The remaining claims follow from the
constructions above.


When proving Theorem \ref{rapid-evolution}, one can use Theorem
\ref{thm-pmap} in order to represent the family of limit cycles as
an analytic family of $m-$periodic orbits of the corresponding
Poincar\'e map inside the cross-sections $\Cpzero.$ Then one can
apply a version of the known property that for $m>1$, an analytic
family of $m$-periodic orbits of a holomorphic map close to
identity, tends to escape a domain inside the map's definition. In
our case the domain happens to be $\Cpzero.$ Therefore, when the
points from the periodic orbit leave $\Cpzero$ they also leave
$\Ecdelta.$ Because all representatives pass through the base
point, and because the base point, which is a point from the
periodic orbit, happens to be outside $\Ecdelta,$ no
representative is entirely contained in $\Ecdelta.$ In the case
when the periodic orbit goes 
through a cut, it turns into a periodic orbit of another branch of
the Poincar\'e map, obtained as a lift of a loop that can be sent
to the original $\delta_0$ by a Dehn twist. This implies that the
new cycle will not have representatives free homotopic to
$\delta_0^m$ inside $\Edelta$ anymore.


An important problem in the study of multi-fold limit cycles is
the existence of the latter in families of polynomial foliations
of the form (1). Heuristically, we can follow the following steps.
Using Theorem \ref{thm-poncrit}, we can find a family of
$\delta_0, 1$-fold cycles which gives a family of isolated fixed
points for the corresponding Poincar\'e map $P_{\e} = P_{\delta_0,
\e}.$ For infinitely many values of $\e$ in any neighborhood of 0,
the derivative of $P_{\e}$ evaluated at the fixed point will be an
$m$-th root of unity. Thus, for such $\e$ a local continuous
family of $m$-periodic isolated orbits will bifurcate from the
fixed point. This will happen as long as the resonant terms of the
normal form of the map do not vanish, i.e. the map is not
analytically equivalent to a rotation. Since having nonzero
resonant terms is a very generic property of resonant maps, we can
expect that the Poincar\'e transformations for most foliations of
the form (1) will have a lot of isolated periodic orbits and thus,
the foliations themselves will have many multi-fold limit cycles.
The only obstacle in this strategy is the verification that some
of the resonant term coefficients of the map's normal form are
nonzero. This is hard to establish since the connection between
the polynomial foliation and its Poincar\'e transformation is
implicit and indirect.


Modifying the strategy above, we give an example of a polynomial
foliation with limit multi-fold vertical cycles. Let $H$ be the
following polynomial with leaves transverse to infinity:
\begin{displaymath}
H = x^2 + y^2.
\end{displaymath}
Choose polynomial forms $\omega_1$ and $\omega_2$ as follows:
\begin{align*}
\omega_1 = (H-1)(y dx - x dy) \,\,\,\, \text{and} \,\,\,\,
\omega_2 = y \, dH.
\end{align*}
Consider the two parameter family
\begin{equation} \label{twoparameter}
\ker \Bigl(dH + \e(\omega_1 + a\omega_2)\Bigl),
\end{equation}
where $\e$ and $a$ are the parameters. Consider the leaf
$$S_1=\{(x,y) \in \CC \, | \, x^2 + y^2 =1\}$$ tangent to the
integrable line field $\ker(dH)$. Fix the loop $\delta_0 = S_1
\cap \RR$. In this setting, the following result holds:

\begin{thm} \label{example}
For any $m \in \Naturals$ large enough there exists a complex
parameter $\e_m$ near $\frac{1}{m}$ and a parameter $a_m$ such
that for all $\e$ in a neighborhood of $\e_m,$ the polynomial
foliation (\ref{twoparameter}) has a limit $\delta_0,m-$fold
vertical cycle. The cycle satisfies the properties of Theorem
\ref{thm-pmap} and is subject to rapid evolution, as explained in
Theorem \ref{rapid-evolution}.
\end{thm}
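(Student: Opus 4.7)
My plan follows the heuristic outlined in the paper: use Pontryagin's criterion to produce a 1-fold limit cycle; force the multiplier of the associated Poincar\'e map to be a primitive $m$-th root of unity; then exploit the freedom in the auxiliary parameter $a$ to ensure the first resonant normal-form coefficient of the Poincar\'e map is nonzero, so that an isolated $m$-periodic orbit bifurcates from the 1-fold fixed point.

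Fix a complex transversal through a point $p_0 \in \delta_0$ and let $P_{\e,a}$ denote the Poincar\'e map along $\delta_0$ of the foliation (\ref{twoparameter}). On the real circle $\delta_u = S_u \cap \RR$ we parameterize $(x,y) = (\sqrt{u}\cos\theta, \sqrt{u}\sin\theta)$, which gives $\int_{\delta_u}(y\,dx - x\,dy) = -2\pi u$. Since $dH$ vanishes on every leaf of $\Foliation^0$, the form $\omega_2 = y\,dH$ restricts to zero on each $\delta_u$, so the Pontryagin integral equals $I(u) = -2\pi u(u-1)$, \emph{independently of $a$}, with simple zero $I(1)=0$ and $I'(1)=-2\pi\neq 0$. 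By Theorem \ref{thm-poncrit}, for every small $\e$ and every $a$ in a fixed disc the foliation (\ref{twoparameter}) carries a 1-fold limit cycle represented by an isolated fixed point $z^*(\e,a)$ of $P_{\e,a}$, depending holomorphically on $(\e,a)$ and tending to the transversal's intersection with $\delta_0$ as $\e\to 0$.

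The multiplier $\lambda(\e,a) = \partial_z P_{\e,a}(z^*(\e,a))$ is holomorphic in $(\e,a)$ with $\lambda(0,a)=1$, and the standard first-order computation yields $\lambda(\e,a) = 1 + c_1\e + O(\e^2)$ with $c_1$ proportional to $I'(1)$, hence nonzero. The implicit function theorem therefore supplies, for every sufficiently large $m$ and every $a$ in a fixed disc, a unique analytic branch $\e_m(a)$ of order $1/m$ solving $\lambda(\e,a) = e^{2\pi i/m}$. At such a resonant parameter value the multiplier is a primitive $m$-th root of unity, and the classical normal-form theorem for one-dimensional holomorphic germs gives an analytic chart around $z^*(\e_m(a),a)$ in which
\begin{displaymath}
P_{\e_m(a),a}(z) = \lambda\, z\,\bigl(1 + c_m(a)\, z^m + O(z^{2m})\bigr),
\end{displaymath}
and consequently
\begin{displaymath}
P_{\e_m(a),a}^{\,m}(z) = z + m\, c_m(a)\, z^{m+1} + O(z^{2m+1}).
\end{displaymath}
Whenever $c_m(a)\neq 0$ the right-hand side has exactly $m$ distinct nonzero roots in a punctured neighborhood of $0$, forming one isolated $m$-periodic orbit of $P_{\e_m(a),a}$.

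The main difficulty, as flagged in the introduction, is to produce $a = a_m$ with $c_m(a_m)\neq 0$. This is precisely the purpose of introducing $\omega_2 = y\,dH$: since $\omega_2$ vanishes along every leaf of $\Foliation^0$, the parameter $a$ affects neither the Pontryagin integral nor the location of the 1-fold fixed point, but the foliation (\ref{twoparameter}) in $\CC$ does depend nontrivially on $a$, and so do the higher Taylor coefficients of $P_{\e,a}$ at $z^*(\e,a)$. The plan is to expand $P_{\e,a}$ to order $\e^2$ via the iterated-Melnikov formalism, extract the coefficient of $z^{m+1}$ in $P_{\e,a} - \lambda z$, and show by direct residue computation on $S_1$ that its leading term is a nonconstant polynomial in $a$, so that $c_m(a)$ is a nontrivial analytic function of $a$ with only isolated zeros. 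Carrying this out for arbitrarily large $m$ is the technical heart of the proof and the step I expect to be hardest. Once $a_m$ is secured, the $m$-periodic orbit of $P_{\e_m(a_m),a_m}$ lies on $\Cpzeroprime$; part 3 of Theorem \ref{thm-pmap} produces the corresponding marked limit cycle, part 4 identifies it as $\delta_0,m$-fold vertical (the orbit stays close to $z^*$, so its image under $H$ stays near $u=1\in \Bdelta$ where it is null-homotopic), and for appropriately chosen radii $\rho_i$ the representative is contained in $\Ecdelta$. Theorem \ref{rapid-evolution} then delivers the rapid evolution assertion.
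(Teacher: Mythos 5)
Your strategy is essentially the paper's: use the persistence of $\delta_0\subset S_1$ to get a fixed point of the Poincar\'e map, tune $\e$ so the multiplier is a primitive $m$-th root of unity, and use $a$ to secure a nonzero $m$-resonant coefficient. But the step you single out as ``the technical heart of the proof and the step I expect to be hardest'' --- showing $c_m(a)\neq 0$ --- is exactly the step you leave undone, and without it the theorem is not established; the rest of your outline is set-up around that missing computation.

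Moreover, the paper's computation is substantially simpler and more explicit than the iterated-Melnikov/normal-form route you sketch. Since $\omega_1$ carries the factor $(H-1)$ and $\omega_2=y\,dH$ vanishes on every leaf of $\ker dH$, the surface $S_1$ is tangent to $F^{a,\e}$ for \emph{all} $(a,\e)$, so $P_{a,\e}(0)\equiv 0$ exactly; no Pontryagin argument is needed to locate the fixed point, and no parameter-dependent normal-form reduction (whose conjugating chart you would have to control in $(a,\e)$) is required. In the flowbox chart $f=f_1\circ f_2$ one has $f^*F^{a,\e}=\ker\bigl(dJ^{(\e)}+a\,\omega^{(\e)}\bigr)$ with $J^{(\e)}=we^{-\e z}$ and $\omega^{(\e)}=e^{-\e z}\sin z\,(1-w)^{-1/2}\,dw$, and the coefficient of $a$ in $P^m_{a,i/m}$ is just the first Melnikov integral of $\omega^{(i/m)}$ along the explicit spiral $\delta^{(m)}_{0,i/m}(u)$. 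Expanding $(1-w)^{-1/2}=\sum_k b_k w^k$ and integrating $e^{ikt/m}\sin t$ over $[0,2\pi m]$, every term with $k\neq m$ drops out and one gets $I_{(m)}(u,i/m)=\pi b_m u^m$ with $b_m$ an explicit nonzero binomial coefficient. Hence the resonance coefficient equals $a\pi b_m+O(a^2)$, nonzero for all small $a\neq 0$; the isolated $m$-periodic orbit then follows from Rouch\'e's theorem after nudging $\e$ away from $i/m$ so that $e^{2\pi m\e}\neq 1$ and $u=0$ stays a simple fixed point distinct from the orbit, and a separate Rouch\'e/minimum estimate shows no lower iterate fixes any of the $m$ points. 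Your sketch is heading the right way and correctly diagnoses the role of $\omega_2$, but the one step you postpone is precisely the content of the proof.
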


\section{Marked Cycles in the Fibred Domain}

\subsection{Topology of the Fiber Bundle}\label{TopologyFiberBundle}

First, we will try to understand the topology of the bundle $H : E
\to B$ induced by the integrable foliation $\Foliation^0.$ The
idea is to "unfold" $E$ into something simple, a direct product in
our case, keeping the "folding pattern" into a group of deck
transformations.

Let $\Disc$ be the open unit disc in $\cc.$ Consider the universal
covering map $\pi : \Disc \to B.$ Denote its group of deck
transformations by $\Gamma.$ Then, $\Gamma$ is isomorphic to the
fundamental group of $B$. Since the disc $\Disc$ is a conformal
model of the hyperbolic plane, $\Gamma$ is a discreet group of
isometries acting properly discontinuously. Let $z_0 \in \Disc$ be
a point such that $\pi(z_0)=u_0.$ Each loop $\gamma_j \in
\pi_1(B,u_0)$ we have chosen in Section \ref{preliminaries} can be
lifted to a path on $\Disc$ starting from point $z_0$. Denote by
$z^{(j)}$ the second end of this path. Abusing notation, for each
$j=1...n^2$ consider $\gamma_j \in \Gamma$ to be the parabolic
isometry of $\Disc$ corresponding to the loop $\gamma_j \in
\pi_1(B,u_0)$ that sends $z_0$ to $z^{(j)}=\gamma_j(z_0).$ Then
$\Gamma = \langle \gamma_1, ..., \gamma_{n^2} \rangle$ is a free
group generated by $n^2$ transformations. Let $\hat{a}_j$ be the
fixed point of the parabolic isometry $\gamma_j$ on the boundary
$\partial \Disc$ for all $j=1,...,n^2.$ We can think of
$\hat{a}_j$ as the lift of $a_j \in \Sigma$ on the ideal boundary
$\partial \Disc$ of the hyperbolic plane $\Disc.$ Assume that the
subscripts in the notation of the critical values are chosen so
that the loop $\gamma_{n^2}...\gamma_2 \gamma_1$ on $B$ is
homotopic to a simple loop around the cusp $\infty$ of $B.$ Thus,
the corresponding isometry $\gamma_{n^2} \circ ...\circ \gamma_2
\circ \gamma_1 \in \Gamma$ is also parabolic with a fixed point we
denote by $\infty_1 \in \partial \Disc$ which can be thought of as
a lift of the infinity point of $\cc \cup \{ \infty \}$ on the
ideal boundary $\partial \Disc.$ Similarly, for any $j=2...n^2$
the isometry $\gamma_{j-1} \circ...\circ \gamma_1 \circ
\gamma_{n^2} \circ...\circ \gamma_j \in \Gamma$ is parabolic with
a fixed point $\infty_j \in \partial \Disc.$ The ideal points
$\hat{a}_1, \infty_1, \hat{a}_2, \infty_2,..., \hat{a}_{n^2}$ and
$\infty_{n^2}$ are arranged in a cyclic order along the boundary
$\partial \Disc$. The geodesic convex hull of those $2n^2$ points
with respect to the Poincar\'e metric on $\Disc$ is 
a closed (in the topology of $\Disc$) ideal $2n^2-$gon $Q$ with
geodesic edges, which is a fundamental domain for the deck group
$\Gamma.$

From now on, we are going to use the shorter notation $S$ for the
fixed fiber $S_{u_0}.$ Also, whenever we have a cartesian product
$M_1 \times M_2$ of two sets, by $pr_{M_i}$ we are going to denote
the projection $pr_{M_i} : M_1 \times M_2 \to M_i$ where
$pr_{M_i}(m_1,m_2)=m_i$ for $i=1,2.$

\begin{thm} \label{bundle_topology}
There is a smooth covering map $\Pi : \Disc \times S \to E$ with
the following properties: \vspace{1.5mm}

\noindent {\em 1.} If $pr_{\Disc} : \Disc \times S \to \Disc$ is
the projection $(z,p) \mapsto z$ then $H \circ \Pi =\pi \circ
pr_{\Disc}.$ \vspace{1.5mm}

\noindent {\em 2.} The deck group of $\Pi : \Disc \times S \to E$
is $$\hat{\Gamma} = \langle \,\, (z,p) \mapsto (\gamma_j(z),
D_{\gamma_j}(p)) \,\,|\,\, j=1...n^2 \,\, \rangle,$$ where
$\gamma_j \in \Gamma$ are the earlier described generators of
$\Gamma$ and the maps $D_{\gamma_j}=\tilde{D}_{\gamma_j}^{-1}$ are
Dehn twists along the vanishing cycles $\delta_j$ on the surface
$S.$ Thus the factor bundle $(\Disc \times S) / \hat{\Gamma}$ is
diffeomorphically isomorphic to the bundle $E.$ 
\end{thm}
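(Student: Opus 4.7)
The plan is to realize $\Pi$ by pulling back $H : E \to B$ along the universal cover $\pi : \Disc \to B$, trivializing the pullback with a $\Gamma$-equivariant Ehresmann connection, and then identifying the resulting deck action on the fiber using the classical Picard--Lefschetz theorem. First I would form the pullback bundle
$$\pi^{*}E = \{(z,p) \in \Disc \times E \,:\, \pi(z) = H(p)\}$$
with its two projections $pr_1 : \pi^{*}E \to \Disc$ and $pr_2 : \pi^{*}E \to E$. The map $pr_2$ is a smooth regular covering whose deck group is the obvious lift of $\Gamma$, namely $\{(z,p) \mapsto (\gamma(z),p) \,:\, \gamma \in \Gamma\}$, while $pr_1$ is a smooth locally trivial fiber bundle with fiber $S$ over the contractible base $\Disc$.

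Next, I would fix a smooth Ehresmann connection on $H : E \to B$, that is, a smooth horizontal distribution complementary to the vertical tangent bundle of $H$, and pull it back via $pr_2$ to a horizontal distribution on $\pi^{*}E$ that is automatically $\Gamma$-invariant. Since $\Disc$ is simply connected, parallel transport from the basepoint fiber $\{z_0\} \times S$ produces a smooth trivialization $\Phi : \pi^{*}E \to \Disc \times S$ with $pr_{\Disc} \circ \Phi = pr_1$ and $\Phi$ equal to the identity above $z_0$. Setting $\Pi = pr_2 \circ \Phi^{-1}$, property 1 is immediate from $H \circ pr_2 = \pi \circ pr_1$. Conjugating the $\Gamma$-action on $\pi^{*}E$ by $\Phi$ and using that the connection is $\Gamma$-invariant, each $\gamma \in \Gamma$ becomes a map of the form $(z,p) \mapsto (\gamma(z), M_\gamma(p))$, where $M_\gamma : S \to S$ is the monodromy of the original connection on $E$ around the loop on $B$ corresponding to $\gamma$. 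The $p$-component is $z$-independent precisely because the connection descends from $B$.

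The main technical step, and the core of the proof, is identifying $M_{\gamma_j}$ with $D_{\gamma_j} = \tilde{D}_{\gamma_j}^{-1}$. This is the classical Picard--Lefschetz theorem in the Morse setting: near the non-degenerate critical point with critical value $a_j$, a standard Lefschetz model is available in which the monodromy around a small counter-clockwise loop encircling $a_j$ is realized by a Dehn twist along the vanishing cycle $\delta_j$, supported in an arbitrarily thin cylindrical neighborhood; by composing with the path in $B$ from $u_0$ to that small loop, the same conclusion holds for the generator $\gamma_j$. The chosen counter-clockwise orientation of $\gamma_j$ together with the direction of parallel transport accounts for the inverse convention $\tilde{D}_{\gamma_j}^{-1}$. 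Once this is established, $\hat{\Gamma}$ is generated as claimed, $\Pi$ is a regular smooth covering with deck group $\hat{\Gamma}$, and the diffeomorphic isomorphism $(\Disc \times S)/\hat{\Gamma} \cong E$ is automatic. The main obstacle lies in this Picard--Lefschetz identification, specifically in matching orientation and inverse conventions with those already fixed for $\tilde{D}_{\gamma_j}$ in Section \ref{preliminaries}, and in choosing the connection carefully enough near the critical fibers so that the supports of the twists can be taken pairwise disjoint whenever $\delta_i \cap \delta_j = \varnothing$.
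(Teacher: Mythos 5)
Your overall strategy (pull back $E$ along $\pi$, trivialize over the contractible cover, identify the conjugated deck action) mirrors the paper's, but the trivialization step has a genuine gap. You claim that parallel transport of a $\Gamma$-invariant Ehresmann connection from the basepoint fiber produces a trivialization $\Phi$ in which each deck transformation becomes $(z,p) \mapsto (\gamma(z), M_\gamma(p))$ with the fiber component independent of $z$. This does not follow from invariance of the connection. If $\tau_z$ denotes parallel transport along your chosen path from $z_0$ to $z$, then conjugating the deck action by $\Phi$ gives $(z,p) \mapsto (\gamma(z), \tau_{\gamma(z)}^{-1}\circ\tau_z(p))$, and the map $\tau_{\gamma(z)}^{-1}\circ\tau_z$ is the holonomy of the original connection on $E$ around a loop in $B$ whose shape depends on $z$. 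These loops all lie in one homotopy class, but a generic Ehresmann connection is not flat, so their holonomies genuinely vary with $z$. You would only get $z$-independence from a flat connection realizing the prescribed monodromy $D_{\gamma_j}$, which is essentially the content of the theorem itself and cannot be invoked for free. A closely related issue is your final step: Picard--Lefschetz gives the monodromy only up to isotopy with a Dehn twist supported near the vanishing cycle, not equal to the specific representative $D_{\gamma_j}$ fixed in Section~\ref{preliminaries}.

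The paper confronts exactly this: it starts with an arbitrary smooth trivialization, records the deck action as $\tilde\gamma(z,p) = (\gamma(z), \psi_\gamma(z,p))$ with a $z$-dependent fiber component, and then constructs an explicit isotopy of gluing maps over tubular neighborhoods of the edges of the fundamental polygon $Q$, first deforming $\psi_{\gamma_j}(z,\cdot)$ to the $z$-independent $\psi_{\gamma_j}(z_0,\cdot)$ and then, via Picard--Lefschetz, to the chosen Dehn twist $D_{\gamma_j}$. Since isotopic gluing maps yield isomorphic bundles, this produces a new trivialization in which the deck action has the required product form. To repair your argument you would need to add an analogous isotopy or re-gluing step; the Ehresmann connection and parallel transport alone do not deliver the conclusion.
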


The essence of this theorem is that not only we can unfold the
bundle $H : E \to B$ into a trivial covering bundle $pr_{\Disc} :
\Disc \times S \to \Disc$ but we can do so by making sure the deck
group $\Gammahat$ acts in a very special manner. It is natural to
expect that any element of the group takes vertical fibers $\{z\}
\times S$ to vertical fibers. What is important is that it also
sends horizontal fibers $\Disc \times \{p\}$ to horizontal fibers.

\begin{proof}
Consider the pullback of the bundle $H : E \to B$ over the disc
$\Disc$ under the covering map $\pi.$ To carry out this
construction, first define the total space $\pi^{*}E=\{(z,q) \in
\Disc \times E \,\, : \,\, \pi(z)=H(q)\}.$ Then, the restricted
projection $\kappa=(pr_{\Disc})_{|_{\pi^{*}E}} : \pi^{*}E \to
\Disc$ gives us
the desired pullback bundle. 
Also, there is a map $\tilde{\Pi}^{'}=(pr_{E})_{|_{\pi^{*}E}} :
\pi^{*}E \to E$ 
that satisfies the condition $H \circ \tilde{\Pi}^{'} = \kappa
\circ \pi$ and so it is a bundle map over the map $\pi.$ Together
with that, $\tilde{\Pi}^{'} : \pi^{*}E \to E$ is a covering map.

Because $\Disc$ is contractible, the pullback bundle $\kappa :
\pi^{*}E \to \Disc$ is trivializible, i.e. there is a smooth
bundle isomorphism $\varsigma : \Disc \times S \to \pi^{*}E$ so
that we have $\kappa \circ \varsigma = pr_{\Disc} \circ
id_{\Disc}$ where $id_{\Disc}$ is the identity map on $\Disc$.
Then, the composition $\tilde{\Pi}=\tilde{\Pi}^{'} \circ \varsigma
: \dtimess \to E$ satisfies the condition $H \circ \tilde{\Pi}=
\pi \circ pr_{\Disc}$ and thus it is a bundle map and
a covering map at the same time. 
Without loss of generality we can think that
$\tilde{\Pi}(z_0,p)=p,$ that is we identify the fiber $\{z_0\}
\times S$ with the surface $S.$

We are going to look at the deck group $\Gammatilde$ of the
covering map $\tilde{\Pi}.$ Let $\tilde\gamma \in \Gammatilde$ be
a deck transformation from that group. Then the diffeomorphism
$\tilde\gamma : \dtimess \to \dtimess$ is of the form
$\tilde{\gamma}(z,p)=(\gamma(z),\psi_{\gamma}(z,p))$ where $\gamma
\in \Gamma$ is a deck transformation for the covering map $\pi$
and $\psi : \dtimess \to S$ is a smooth map. If we factor
$\dtimess$ by the action of the deck group $\Gammatilde$ we obtain
the manifold $(\dtimess) / \Gammatilde$ which is isomorphic to $E$
as a fiber bundle over $B$. For any $(z,p) \in \dtimess$ consider
$\psi_{\gamma,z}(p)=\psi_{\gamma}(z,p).$ Then, $\psi_{\gamma,z} :
S \to S$ is a diffeomorphism on the standard fiber $S$ for any
fixed $z \in \Disc.$ If $\gamma_j$ is one of the generators of
$\Gamma$, as described before, then $\psi_{\gamma_j,z_0}$ is
isotopic to the Dehn twist
$D_{\gamma_j}=\tilde{D}_{\gamma_j}^{-1}.$ This follows from
Picard-Lefchetz's theory as discussed previously in section
\ref{preliminaries} and in \cite{AGV}.

By the properties of the ideal polygon $Q,$ for each $j=1,...,n^2$
there are two adjacent geodesic edges that have $\hat{\alpha}_j$
as a common ideal vertex. One of those two edges, we denote by
$e_j,$ is mapped by $\gamma_j$ to the other one, we denote by
$\gamma_j(e_j).$ Then, both $e_j$ and $\gamma_j(e_j)$ meet the
ideal boundary $\partial \Disc$ at $\hat{\alpha_j}.$ Now, for any
$j=1,...,n^2$ consider an open tubular neighborhood $I_j$ of $e_j$
in $\Disc$ thin enough so that two properties hold. First,
$\overline{I}_i \cap \overline{I}_j=\emptyset$ whenever $i \neq
j.$ Here, $\overline{I}_j$ is the closure of $I_j$ in the
hyperbolic plane $\Disc.$ Second, $\overline{I_j} \cap
\gamma_j(\overline{I}_j)=\emptyset,$ where $j=1,...,n^2.$ Notice,
that $\gamma_j(I_j)$ is a tubular neighborhood of $\gamma_j(e_j).$
Let $I = \sqcup_{j=1}^{n^2} I_j$ and $J=\sqcup_{j=1}^{n^2}
\gamma_j(I_j).$ Denote by $\tilde{Q}$ the union $Q \cup I \cup J.$
We can see that $\tilde{Q}$ is an open neighborhood of the
fundamental domain $Q.$ 

Define the smooth gluing map $\phi_0 : I \times S \to J \times S$
to be $\phi_{0}(z,p)=(\gamma_j(z),\psi_{\gamma_j}(z,p))$ for any
$(z,p) \in I_j \times S,$ where $j=1,...,n^2.$ Since $\phi_0$
respects the bundle structure of $\dtimess$, the quotients
$(\tilde{Q} \times S)/\phi_0$ and $(\dtimess)/\Gammatilde$ are
smoothly isomorphic as fiber bundles over $B$ (for isotopies of
gluing maps, see for example \cite{Hr}.) Therefore, $(\tilde{Q}
\times S)/\phi_0$ and $E$ are smoothly isomorphic as bundles over
$B.$

Notice, that $I_j$ is diffeomorphic to a disc and so it
deformation retracts onto a point $z_j \in I_j$ for $j=1,...,n^2.$
For that reason, there exists a smooth deformation retraction
$r^{(j)} : I_j \times [0, \third] \to I_j$ so that
$r^{(j)}_0=id_{I_j}$ and $r^{(j)}_{1/3}\equiv z_j.$ Then, extend
$r^{(j)}_t$ smoothly for $t \in [0,\twothird]$ so that whenever $t
\in [\third,\twothird]$ we have $r^{(j)}_t(z)=z_j(t)$ for any $z
\in I_j$ where $z_j(t)$ is a smoothly parametrized geodesic
connecting $z_j$ to $z_0.$ Thus the smooth map $r^{(j)} : I_j
\times [0, \twothird] \to I_j$ is a homotopy connecting the
identity map on $I_j$ to the constant map $r^{(j)}_{2/3}(z)=z_0$
for $z \in I_j.$

Define the isotopy
\begin{align*}
 &\phi : I \times S \times [0,2/3] \to J \times S \\
 &\phi_t(z,p)=(\gamma_j(z),\psi_{\gamma_j}(r^{(j)}_t(z),p))
\end{align*}
for $(z,p) \in I_j \times S$ where $j=1,...,n^2.$ When $t=0$ we
have the earlier defined map $\phi_0.$ When $t=2/3$ we obtain the
map $\phi_{2/3}(z,p)=(\gamma_j(z),\psi_{\gamma_j}(z_0,p))$ for
$(z,p) \in I_j \times S.$ Notice that the second component of
$\phi_{2/3}$ does not depend on the variable $z$ but only on $p.$
As we mentioned earlier,
$\psi_{\gamma_j}(z_0,p)=\psi_{\gamma_j,z_0}(p)$ is isotopic to
$D_{\gamma_j}(p).$ Let
$\Psi^{j}_t(z,p)=\psi_{\gamma_j}(r^{(j)}_t(z),p)$ for $t \in
[0,\twothird]$ and $(z,p) \in I_j \times S$ where $j=1,...,n^2.$
Let $\Psi^{j}_t(z,p)$ for $t \in [\twothird, 1]$ be the isotopy on
the surface $S$ that connects the diffeomorphism
$\psi_{\gamma_j,z_0}(p)$ to the Dehn twist
$D_{\gamma_j}=\tilde{D}_{\gamma_j}^{-1}.$ Notice, that in the case
when $t \in [\twothird, 1]$ the presence of the variable $z$ in
the expression $\Psi^{j}_t(z,p)$ is superficial as the isotopy in
fact does not depend on $z$ but it takes place only on the surface
$S.$

Using the notation above, define the isotopy
\begin{align*}
 &\phi : I \times S
\times [0,1] \to J \times S \\
&\phi_t(z,p)=(\gamma_j(z),\Psi^{j}_t(z,p))
\end{align*}
for $(z,p) \in I_j \times S$ where $j=1,...,n^2.$ Thus, for
$j=1,...,n^3$ the maps
$\phi_0(z,p)=(\gamma_j(z),\psi_{\gamma_j}(z,p))$ and
$\phi_1(z,p)=(\gamma_j(z), D_{\gamma_j}(p))$ are isotopic for
$(z,p) \in I_j \times S$. Notice that $\phi_t$ respects the
vertical fibers $\{z\} \times S,$ that is the isotopy takes place
only with respect to the second coordinate, along the fiber $S,$
while the first coordinate is kept the same. Therefore,
$(\tilde{Q} \times S)/\phi_0$ and $(\tilde{Q} \times S)/\phi_1$
are smoothly isomorphic as fiber bundles over $B.$ As we already
saw, $(\tilde{Q} \times S)/\phi_0$ and $E$ 
are isomorphic as well. Hence, 
$(\tilde{Q} \times S)/\phi_1$ and $E$ are isomorphic as bundles
over $B.$ Since by construction $(\tilde{Q} \times S)/\phi_1$ and
$(\Disc \times S)/\Gammahat$ are also isomorphic as bundles over
$B$, we can conclude that there exists a smooth bundle isomorphism
$\Phi : (\Disc \times S)/\Gammahat \to E.$ If $\upsilon : \dtimess
\to (\dtimess)/\Gammahat$ is the quotient map, then it is a bundle
map over the covering map $\pi.$ When we compose it with $\Phi$ we
obtain the desired bundle covering map $\Pi = \Phi \circ \upsilon
: \dtimess \to E$ satisfying the condition $H \circ \Pi=\pi \circ
pr_{\Disc}$ and having $\Gammahat$ as its group of deck
transformations. This completes the proof of the theorem.
\end{proof}

The results from Theorem \ref{bundle_topology} are a main tool in
the proofs of Theorem \ref{thm-pmap} and \ref{rapid-evolution}. As
it was mentioned already, a deck transformation $\hat{\gamma}(z,p)
= (\gamma(z), D_{\gamma}(p))$ from $\hat{\Gamma}$ maps not only
vertical fibers $\{z\} \times S$ to vertical fibers $\{\gamma(z)\}
\times S$ but also horizontal fibers $\Disc \times \{p\}$ to
horizontal fibers $\Disc \times \{D_{\gamma}(p)\}$. In particular,
since $D_{\gamma}$ acts on $S_{u_0} - (\cup_{j=1}^{n^2}
supp(D_{\gamma_j}))$ as the identity map, whenever $p\in S_{u_0} -
(\cup_{j=1}^{n^2} supp(D_{\gamma_j})),$ the horizontal disc $\Disc
\times \{p\}$ is invariant under the action of $\hat{\Gamma}.$
These facts lead us to the following conclusion.

\begin{cor} \label{transverse surface}
The projection $\Pi(\Disc \times \{p\}) = B_p$ is a smoothly
embedded surface in $E,$ diffeomorphic to $B.$ It intersects each
leaf from the integrable foliation $\Foliation^0$ transversely at
a single point.
\end{cor}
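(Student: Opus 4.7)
The plan is to pull both claims back to the trivial covering bundle $\dtimess$ via Theorem \ref{bundle_topology}, where they become transparent. Assume $p \in S_{u_0} - \cup_{j=1}^{n^2} supp(D_{\gamma_j})$, so that, as noted just before the corollary, every $\hat{\gamma} \in \Gammahat$ acts on $\Disc \times \{p\}$ by $(z, p) \mapsto (\gamma(z), p)$. Thus $\Disc \times \{p\}$ is $\Gammahat$-invariant, and the restriction of the $\Gammahat$-action to it is nothing but the deck action of $\Gamma$ on $\Disc.$

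First I would show that $B_p$ is a smoothly embedded surface diffeomorphic to $B.$ The restriction $\Pi|_{\Disc \times \{p\}}$ is a smooth covering map onto $B_p$ whose deck group acts through the first factor as $\Gamma,$ so already at the topological level $B_p \cong \Disc/\Gamma = B.$ For the smooth embedded structure I would use standard evenly covered neighborhoods of $\Pi$: given $\hat{z} = (z, p) \in \Disc \times \{p\}$, pick an open $U \subset \dtimess$ containing $\hat{z}$ such that $\Pi|_U$ is a diffeomorphism onto $\Pi(U)$ and $\Pi^{-1}(\Pi(U))$ is the disjoint union of the translates $\hat{\gamma}(U);$ because $\Disc \times \{p\}$ is $\Gammahat$-invariant one has
\begin{equation*}
\Pi(U) \cap B_p = \Pi(U \cap (\Disc \times \{p\})),
\end{equation*}
which is the diffeomorphic image of an embedded disc in $U.$ Patching these local slice charts produces the smooth embedded structure on $B_p$ and identifies it with $B.$

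Next I would establish transversality and the single-point property. The relation $H \circ \Pi = \pi \circ pr_{\Disc}$ gives $\Pi^{-1}(S_u \cap E) = \pi^{-1}(u) \times S$, a disjoint union of vertical fibers $\{z\} \times S$ indexed by $z \in \pi^{-1}(u).$ Each such vertical fiber meets the horizontal disc $\Disc \times \{p\}$ transversely at the unique point $(z, p),$ because $T_z \Disc \oplus T_p S = T_{(z, p)}(\dtimess).$ Since $\Pi$ is a local diffeomorphism, this transversality descends to transversality of $S_u$ and $B_p$ in $E.$ Finally, all points of $\pi^{-1}(u) \times \{p\}$ form a single $\Gammahat$-orbit: for any $z_1, z_2 \in \pi^{-1}(u)$ there is $\gamma \in \Gamma$ with $\gamma(z_1) = z_2,$ and then $\hat{\gamma}(z_1, p) = (\gamma(z_1), p) = (z_2, p)$ because $D_\gamma(p) = p.$ Hence they all project to a single point of $B_p \cap S_u.$ No serious obstacle is expected here; the corollary is essentially a bookkeeping exercise, with the one slightly delicate point being to exploit the $\Gammahat$-invariance of the horizontal slice in order to pass from a covering map restricted over an invariant submanifold to an honest smooth embedding.
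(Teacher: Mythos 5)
Your argument is correct and follows exactly the route the paper takes: the paper offers no separate proof, treating the corollary as immediate from Theorem \ref{bundle_topology} together with the observation made just before the corollary that $\Disc \times \{p\}$ is $\hat{\Gamma}$-invariant when $p$ avoids the twist supports, which is precisely the fact you exploit. You have simply filled in the bookkeeping — the restriction of the deck action to the first factor, the slice charts from evenly covered neighborhoods, the descent of transversality through the local diffeomorphism $\Pi$, and the single-orbit computation for $\pi^{-1}(u) \times \{p\}$ — that the paper leaves implicit.
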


In particular, this corollary applies to the point $p_0.$ Thus, we
have obtained the global cross-section $B_{p_0}.$

\subsection{Properties of Multi-Fold Vertical
Cycles}\label{Section_Vertical_Cycles}

In this section, we give a proof of Proposition
\ref{TopologyOfCycles}. We start with some notations which will be
used at a later time.

Let $M$ be an arbitrary path-connected topological space with a
base point $x_0 \in M.$ Let $l$ be a loop on $M$ passing through
$x_0.$ Then, by $[l]_M$ we are going to denote the equivalence
class of all loops homotopic to $l$ in $M,$ relative to the base
point $x_0.$

Denote by $\Bdeltahat \subset \Disc$ the connected component of
$\pi^{-1}(\Bdelta)$ that contains the point $z_0.$ First, the
domain $\Bdeltahat$ is open. Second, the closure of $\cup_{\gamma
\in \Gamma} \gamma(\Bdeltahat)$ is equal to the whole disc
$\Disc.$ Third, for any two transformations $\gamma_1$ and
$\gamma_2$ from $\Gamma$, either $\gamma_1(\Bdeltahat) \cap
\gamma_2(\Bdeltahat) = \varnothing$ or $\gamma_1(\Bdeltahat) =
\gamma_2(\Bdeltahat).$

Since $\Bdeltahat$ is homeomorphic to a disc, there exists a
deformation retraction $\overline{R}_t : \Bdeltahat \to
\Bdeltahat$ of $\Bdeltahat$ onto $z_0,$ where $t \in [0,1].$ Then
$\overline{R}_0=id_{\Bdeltahat}, \overline{R}_1 \equiv z_0$ and
$\overline{R}_t(z_0)=z_0$ for all $t \in [0,1].$ Using
$\overline{R}_t$, we can define the continuous one-parameter
family of maps $R_t : \Bdeltahat \times S \to \Bdeltahat \times S$
by denoting $R_t(z,p)=(\overline{R}_t(z),p),$ where $t \in [0,1]$
and $(z,p) \in \Bdeltahat \times S.$  Notice, that $R_0 =
id_{(\hat{\Bdelta} \times S)}$ and $R_1(z,p)=(z_0,p).$ In
addition, $R_t(z_0,p)=(\overline{R}_t(z_0),p)=(z_0,p)$ for any
point $(z_0,p) \in \{z_0\} \times S$ and any $t \in [0,1]$. Then
$R_t$ is a deformation retraction of $\Bdeltahat \times S$ onto
$\{z_0\} \times S.$ For simplicity, let $R=R_1.$ So
$R(z,p)=(z_0,p)$ for any $(z,p) \in \Bdeltahat \times S$ and it
can be rewritten as $R(z,p)=(z_0,pr_S(z,p)).$

Analogously, we can define a deformation retraction $R^{\prime}_t$
of $\Disc \times S$ onto $\{z_0\} \times S.$ Again for simplicity,
we denote $R^{\prime}(z,p)=R^{\prime}_1(z,p)=(z_0,p)$ for any
point $(z,p)$ from $\Disc \times S.$ As in the case of $R$, we can
write $R^{\prime}(z,p)=(z_0,pr_S(z,p))$ \vspace{2mm}
\paragraph{\em Proof of Proposition \ref{TopologyOfCycles}.}
We start with point one from the proposition. By assumption, we
know that the foliation $\Fol$ has a marked cycle $(\Delta,q)$
with a representative $\delta$ contained in $\Edelta$ and free
homotopic to $\delta_0^m$ inside $\Edelta.$ Assume that besides
that, the representative $\delta$ is free homotopic inside
$\Edelta$ to another loop $\delta_0^{\prime},$ also lying on the
fibre $S.$ This implies that there exists a free homotopy
$\delta(t)$ inside $\Edelta,$ where $t \in [0,1],$ such that
$\delta(0)=\delta_0^{\prime}$ and $\delta(1)=\delta_0.$ The loop
$\delta_0^{\prime}$ lifts to the loop $\{z_0\} \times
\delta_0^{\prime}$ on the fiber $\{z_0\} \times S$ and so,
$\Pi(\{z_0\} \times \delta_0^{\prime}) = \delta_0^{\prime}.$ Then
$\delta(t)$ lifts to a homotpy $\hat{\delta}(t)$ for which
$\hat{\delta}(0)=\{z_0\} \times \delta_0^{\prime}.$ When $t=1$ the
loop $\hat{\delta}(1)$ belongs to the fiber $\{\gamma(z_0)\}
\times S$ and maps to $\delta_0=\Pi(\hat{\delta}(1)),$ where
$\gamma \in \Gamma.$ Since the homotopy $\delta(t)$ takes place
inside the domain $\Edelta,$ the lifted homotopy $\hat{\delta}(t)$
takes place in $\Bdeltahat \times S,$ so in fact $\gamma \in
\Gamma_0.$ Because $\hat{\delta}(1)$ lies on the fiber
$\{\gamma(z_0)\} \times S,$ it has the form $\hat{\delta}(1) =
\{\gamma(z_0)\} \times \delta_1,$ where $\delta_1$ is a loop on
the surface $S.$ Using this representation we compute
\begin{align*}
\Pi\Bigl(\{\gamma(z_0)\} \times \delta_1\Bigr)&=\Pi \circ
\hat{\gamma}^{-1}\Bigl(\{\gamma(z_0)\} \times \delta_1\Bigr)\\
&=\Pi\Bigl(\{\gamma^{-1} \circ \gamma(z_0)\} \times
D^{-1}_{\gamma}(\delta_1)\Bigr) \\
&=\Pi\Bigl(\{z_0\} \times D^{-1}_{\gamma}(\delta_1)\Bigr) \\
&= D^{-1}_{\gamma}(\delta_1)=\delta_0,
\end{align*}
that is $\delta_1=D_{\gamma}(\delta_0).$ Now, consider the
homotopy $pr_S(\hat{\delta}(t))$ which takes place only on the
surface $S$. Notice that $pr_S(\hat{\delta}(t))$ is continuous
with respect to $t \in [0,1].$  Moreover, for $t=0$ we have
$pr_S(\hat{\delta}(0))=\delta_0^{\prime}$ and for $t=1$ we have
$pr_S(\hat{\delta}(1))=\delta_1=D_{\gamma}(\delta_0).$ As we
already noticed, $D_{\gamma}(\delta_0)=\delta_0$ whenever $\gamma
\in \Gamma_0,$ hence $pr_S(\hat{\delta}(t))$ is the desired
homotopy on the surface $S$ between the two loops
$\delta_0^{\prime}$ and $\delta_0.$

Next, we prove the second part of the proposition. Since both
$\delta$ and $\delta^{\prime}$ are representatives from the same
marked cycle $(\Delta,q),$ there exists a homotopy $\delta(t)$ on
the leaf $\varphi^{\e}_q$ that keeps the base point $q$ fixed and
connects $\delta$ to $\delta^{\prime}$. Ignoring the leaf
$\varphi^{\e}_q,$ we have a homotopy $\delta(t)$ inside $E$ such
that $\delta(0)=\delta$ and $\delta(1)=\delta^{\prime}.$

Let $(\tilde{z},\tilde{p}) \in \Bdeltahat \times S$ be such that
$\Pi(\tilde{z},\tilde{p})=q.$ Since $\delta$ is $\delta_0,m-$fold
vertical, it lifts under the covering map $\Pi$ to a loop
$\hat{\delta}$ contained in $\Bdeltahat \times S.$ By the homotopy
lifting property of covering spaces \cite{H}, the homotopy
$\delta(t)$ inside $E$ lifts to a homotopy $\hat{\delta}(t)$
inside $\Disc \times S$, so that $\Pi(\hat{\delta}(t))=\delta(t).$
Thus, $\hat{\delta}(t)$ connects $\hat{\delta}$ to
$\hat{\delta}^{\prime}=\hat{\delta}(1),$ where
$\Pi(\hat{\delta}^{\prime})=\delta^{\prime}.$

Because of the assumption that $\delta^{\prime}$ is contained in
$\Edelta$, it follows that $\hat{\delta}^{\prime}$ is inside
$\gamma(\Bdeltahat) \times S$ for some $\gamma \in \Gamma.$ Then,
the base point $(\tilde{z},\tilde{p}),$ which lies on the loop
$\hat{\delta}^{\prime},$ is simultaneously in $\gamma(\Bdeltahat)
\times S$ and in $\Bdeltahat \times S.$ Therefore
$(\gamma(\Bdeltahat) \times S) \cap (\Bdeltahat \times S) \neq
\varnothing,$ which is possible only when $\gamma(\Bdeltahat) \cap
\Bdeltahat \neq \varnothing.$ But by construction,
$\gamma(\Bdeltahat) \cap \Bdeltahat \neq \varnothing$ if and only
if $\gamma(\Bdeltahat) = \Bdeltahat.$ It follows from here that
$\hat{\delta}^{\prime}$ is contained in $\Bdeltahat \times S.$

As pointed out in the two paragraphs preceding the proof, the map
$R : \Bdeltahat \times S \to \{z_0\} \times S$ defined by the
expression $R(z,p)=(z_0,p)$ is a deformation retraction.
Similarly, $R^{\prime} : \Disc \times S \to \{z_0\} \times S$,
defined by the same rule $\hat{R}^{\prime}(z,p)=(z_0,p)$, is also
a deformation retraction. The induced homomorphisms on the
corresponding fundamental groups
\begin{align*}
& R_{*} : \pi_1(\Bdeltahat \times S, (\tilde{z},\tilde{p})) \to
\pi_1(\{z_0\} \times S,(z_0,\tilde{p})) \\
& R_{*}^{\prime} : \pi_1(\Disc \times S, (\tilde{z},\tilde{p}))
\to \pi_1(\{z_0\} \times S,(z_0,\tilde{p})),
\end{align*}
given by $R_{*}[\,l\,]_{(\Bdeltahat \times S)}=[R(l)]_{(\{z_0\}
\times S)}$ and $R_{*}^{\prime}[\,l^{\prime}\,]_{(\Disc \times
S)}=[R^{\prime}(l^{\prime})]_{(\{z_0\} \times S)}$ respectively,
are isomorphisms since they come from deformation retractions
\cite{H}. Here, $l$ and $l^{\prime}$ are arbitrary loops from
$\Bdeltahat \times S$ and $\Disc \times S$ respectively, passing
through $(\tilde{z},\tilde{p})$. Because of the fact that $R$ is
simply the restriction of $R^{\prime}$ onto $\Bdeltahat \times S$
and that both loops $\hat{\delta}$ and $\hat{\delta}(1)$ are
contained in $\Bdeltahat \times S,$ it follows that
\begin{align*}
R_{*}^{\prime}[\,\hat{\delta}\,]_{(\Disc \times S)} &=
[R^{\prime}(\hat{\delta})]_{(\{z_0\} \times
S)}=[R(\hat{\delta})]_{(\{z_0\} \times
S)}=R_{*}[\,\hat{\delta}\,]_{(\Bdeltahat \times S)}\\
R_{*}^{\prime}[\,\hat{\delta}^{\prime}\,]_{(\Disc \times S)}
&=[R^{\prime}(\hat{\delta}^{\prime})]_{(\{z_0\} \times
S)}=[R(\hat{\delta}^{\prime})]_{(\{z_0\} \times
S)}=R_{*}[\,\hat{\delta}^{\prime}\,]_{(\Bdeltahat \times S)}.
\end{align*}
Since $\hat{\delta}$ and $\hat{\delta}^{\prime}$ are homotopic
inside $\Disc \times S$ via $\hat{\delta}(t),$ we can see that
$[\,\hat{\delta}\,]_{(\Disc \times
S)}=[\,\hat{\delta}^{\prime}\,]_{(\Disc \times S)}.$ Therefore,
$R_{*}^{\prime}[\,\hat{\delta}\,]_{(\Disc \times
S)}=R_{*}^{\prime}[\,\hat{\delta}^{\prime}\,]_{(\Disc \times S)}.$
Combining all of those identities, we obtain
\begin{align*}
R_{*}[\,\hat{\delta}\,]_{(\Bdeltahat \times S)} &=
R_{*}^{\prime}[\,\hat{\delta}\,]_{(\Disc \times
S)}=R_{*}^{\prime}[\,\hat{\delta}^{\prime}\,]_{(\Disc \times
S)}=R_{*}[\,\hat{\delta}^{\prime}\,]_{(\Bdeltahat \times S)}.
\end{align*}
Since $R_{*}$ is a group isomorphism
\begin{align*}
R_{*}[\,\hat{\delta}\,]_{(\Bdeltahat \times
S)}=R_{*}[\,\hat{\delta}^{\prime}\,]_{(\Bdeltahat \times S)}\,
\,\, &\text{if and only if} \,\,\, [\,\hat{\delta}\,]_{(\Bdeltahat
\times S)}=[\,\hat{\delta}^{\prime}\,]_{(\Bdeltahat \times S)},
\end{align*}
which immediately implies that there exists a homotopy
$\hat{\delta}_t$ inside $\Bdeltahat \times S$ such that
$\hat{\delta}_0=\hat{\delta}$ and
$\hat{\delta}_1=\hat{\delta}^{\prime}.$ The projection of
$\hat{\delta}_t$ back to $E$ gives rise to a homotopy
$\delta_t=\Pi(\hat{\delta}_t)$ inside $\Edelta$ between the loops
$\delta^{\prime}$ and $\delta.$ By assumption, $\delta$ is free
homotopic to $\delta_0^m$ inside $\Edelta.$ Therefore,
$\delta^{\prime}$ is also free homotopic to $\delta_0^m$ inside
$\Edelta.$ $\square$


\section{The Poincar\'e Map, Periodic Orbits, and Marked Cycles}

The goal of this section is to provide the proof of Theorem
\ref{thm-pmap}. It heavily relies on the results from the
preceding chapter and establishes the link between the topological
properties of the foliation and the dynamical properties of its
Poincar\'e transformation, constructed on a very large
cross-section.

\subsection{Construction of a Non-Local Poincar\'e
Map}\label{global_map_construction}


As promised in Section \ref{preliminaries}, we begin with a
description of each cut $l_j$ that connects the cusp $a_j$ to
$\infty$ on $B,$ for $j \in J(\delta_0)$. Let $l_ j = \pi(e_j) =
\pi(\gamma_j(e_j)) \subset B$ be the image of the two adjacent
geodesic edges $e_j$ and $\gamma_j(e_j)$ of the ideal polygon $Q$
that meet the boundary of $\Disc$ at $\hat{a}_j$ (see Section
\ref{preliminaries}.)


Now, having in mind all the constructions from Sections
\ref{preliminaries} and \ref{TopologyFiberBundle}, we are ready to
move on with the definition of the desired Poincar\'e map. Our
first step will be to set up a few domains in $\Disc$ that will
play an important role in the construction of the map. From this
moment on, all interiors and closures of subsets of $\Disc$ will
be relative to the topology of the open disc $\Disc$. Lift the
domain $ \Aprime $ onto $\Disc$ to obtain $\Aprimehat=
\pi^{-1}(\Aprime).$ Take $\Cdeltaprimehat$ to be the connected
component of $\pi^{-1}(\Cdeltaprime)$ that contain the point
$z_0.$ Define the compact domain $Q^{\prime}=Q \cap
\pi^{-1}(\overline{\Cdeltaprime}).$ We can think of $Q^{\prime}$
as the ideal geodesic polygon $Q$ with its corners cut out along
horocycle arcs. Attach to $Q^{\prime}$ the neighboring congruent
pieces to form the compact domain
$$\Cprimehat= \cup \{\gamma(Q^{\prime}) \,:\,
\gamma \in \{id_{\Disc},\gamma_1,...,\gamma_{n^2},
\gamma^{-1}_1,...,\gamma^{-1}_{n^2}\}\}.$$ Similarly, let $Q_A=Q
\cap \pi^{-1}(\overline{A})$ and let
$$\CAhat = \cup \{\gamma(Q_A) \,:\,
\gamma \in \{id_{\Disc},\gamma_1,...,\gamma_{n^2},
\gamma^{-1}_1,...,\gamma^{-1}_{n^2}\}\}.$$
 If we denote by $\Chat$ the intersection
$Q \cap \pi^{-1}(\overline{\Cdelta}),$ then by construction $\Chat
\subset \CAhat \subset \Cprimehat \subset \Aprimehat.$

In the constructions that are going to follow we will need the
group $\Gamma_0 = \langle \gamma_j \,\, | \,\, j \in J(\delta_0)
\rangle$ and its lift $\hat{\Gamma}_0 = \langle
\hat{\gamma}_j=\gamma_j \times D_{\gamma_j} \,\, | \,\, j \in
J(\delta_0) \rangle$ which are subgroups of the deck groups
$\Gamma$ and $\hat{\Gamma}$ respectively. With the help of those
groups we define the closed domains
$$\Xdeltahat = \cup_{\gamma \in \Gamma_0} \gamma(\Chat), \,\,\,
\Xdeltaprimehat = \cup_{\gamma \in \Gamma_0} \gamma(\Cprimehat)
\,\,\,\text{and}\,\,\,\Ahat = \cup_{\gamma \in \Gamma_0}
\gamma(\CAhat).$$ Notice, that $\Xdeltahat$ is in fact the closure
of $\Cdeltahat$.


Consider the pull-back $\hat{\Fol}=\Pi^*\Fol.$ This is a foliation
on $\Disc \times S$ invariant with respect to the action of
$\hat{\Gamma}.$ In other words, if $\hat{\gamma} \in \hat{\Gamma}$
and $\hat{\varphi}^{\e}_{(z,p)}$ is a leaf of the foliation
$\hat{\Fol}$ passing through the point $(z,p) \in \Disc \times S,$
then $\hat{\gamma} (\hat{\varphi}^{\e}_{(z,p)}) =
\hat{\varphi}^{\e}_{\hat{\gamma}(z,p)}.$ Notice that the closure
of the projection $\Pi(\Aprimehat \times
\{p_0\})=\Apzeroprime$ 
is compact in $E$ and thus, the line field of the foliation $\Fol$
is transverse to $\Apzeroprime$ for all $|\e| \leq r,$ where $r>0$
is small enough.


\begin{lem} \label{lemma about lifted Pmap}
For small enough $r>0$ and for any $|\e| \leq r$ there exists a
smooth Poincar\'e map $\Pmaphat : \Cprimehat \times \{p_0\} \to
\Aprimehat \times \{p_0\}$ associated with the foliation
$\hat{\Fol}$ such that for any $\hat{\gamma} \in \hat{\Gamma}$ if
both points $(z,p_0)$ and $\hat{\gamma}(z,p_0)$ belong to
$\Cprimehat \times \{p_0\}$ then $ \hat{\gamma} \circ \Pmaphat =
\PmaphatD \circ \hat{\gamma}.$ In particular, if $\hat{\gamma} \in
\hat{\Gamma}_0$ then $ \hat{\gamma} \circ \Pmaphat = \Pmaphat
\circ \hat{\gamma}.$ Moreover, for an integer $m>0$ the radius
$r>0$ can be chosen small enough so that $\Pmaphat^k(\Chat \times
\{p_0\}) \subset \CAhat \times \{p_0\},$ for $k=1,...,m$ and for
all $\e \in D_{r}(0)$
\end{lem}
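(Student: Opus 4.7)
The plan is to construct $\Pmaphat$ by path-lifting in the trivial bundle $\Disc \times S$ and then to deduce the equivariance and invariance statements from the $\Gammahat$-invariance of the pulled-back foliation $\hat{\mathcal{F}}^{\e} = \Pi^{*}\Fol$. The guiding principle is that all the compactness required for uniform estimates will be obtained by descending non-compact lifts in $\Disc$ to fundamental domains that project to compactly contained subsets of $E$.

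\textbf{Transversality and definition.} At $\e = 0$ the leaves of $\hat{\mathcal{F}}^{0}$ are precisely the vertical fibers $\{z\} \times S$, hence are transverse to every horizontal disc $\Disc \times \{p\}$. Because $\hat{\mathcal{F}}^{\e}$ is $\Gammahat$-invariant, uniform transversality to the horizontal discs on $\overline{\Aprimehat} \times S$ reduces to the same property on one compact fundamental domain, whose image under $\Pi$ is relatively compact in $E$ by the construction of $\Aprime$. Continuity of the distribution in $\e$ yields a radius $r > 0$ such that transversality persists for all $|\e| \leq r$. Fix a parametrization $\delta_0 : [0,1] \to S$ with $\delta_0(0) = \delta_0(1) = p_0$. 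For each $(z, p_0) \in \Cprimehat \times \{p_0\}$, this transversality produces a unique smooth lift $t \mapsto (z(t), \delta_0(t))$ on the leaf of $\hat{\mathcal{F}}^{\e}$ through $(z, p_0)$ characterized by $pr_{S}(z(t), \delta_0(t)) = \delta_0(t)$. I set $\Pmaphat(z, p_0) := (z(1), p_0)$. At $\e = 0$ the lift is simply $\{z\} \times \delta_0$, so by continuity the entire lift stays in $\Aprimehat \times S$ for small $\e$ and the endpoint lies in $\Aprimehat \times \{p_0\}$. Joint smoothness in $(z, \e)$ is the standard smooth dependence of ODE solutions on initial data and parameters.

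\textbf{Equivariance.} Let $\hat{\gamma} = \gamma \times D_{\gamma} \in \Gammahat$. Since $\hat{\mathcal{F}}^{\e}$ is $\Gammahat$-invariant and $\hat{\gamma}(\Disc \times \{p\}) = \Disc \times \{D_{\gamma}(p)\}$, pushing the lift $(z(t), \delta_0(t))$ forward by $\hat{\gamma}$ produces $(\gamma(z(t)), D_{\gamma}(\delta_0(t)))$, which is itself the path-lift of the loop $D_{\gamma} \circ \delta_0$ based at $D_{\gamma}(p_0)$ along the leaf through $\hat{\gamma}(z, p_0)$. Evaluating at $t = 1$ yields $\hat{\gamma} \circ \Pmaphat = \PmaphatD \circ \hat{\gamma}$ whenever both sides are defined; the hypothesis that $(z, p_0)$ and $\hat{\gamma}(z, p_0)$ both lie in $\Cprimehat \times \{p_0\}$ forces $D_{\gamma}(p_0) = p_0$ and makes the identity meaningful. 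For $\hat{\gamma} \in \hat{\Gamma}_0$, the Dehn-twist generators involved act as the identity on $\delta_0$ by the support conditions fixed in Section \ref{preliminaries}, so $D_{\gamma}(\delta_0) = \delta_0$ pointwise and the identity collapses to the commutation $\hat{\gamma} \circ \Pmaphat = \Pmaphat \circ \hat{\gamma}$.

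\textbf{Containment and main obstacle.} At $\e = 0$ the map $\Pmaphat$ is the identity, since lifting $\delta_0$ along a vertical-fiber leaf returns to the starting point; consequently $\Pmaphat^{k}$ is the identity at $\e = 0$ for every $k$. The sets $\Chat$ and $\CAhat$ are compact in $\Disc$ by construction, and $\Chat \subset \mathrm{int}(\CAhat)$: along the geodesic edges of $Q$, the translates $\gamma_{j}^{\pm 1}(Q_A)$ attached in the definition of $\CAhat$ thicken $Q_A \supset \Chat$ across those edges, while along each cusp the strict inequality $\rho_0 > \rho_1$ places the horocycle cut-offs of $\CAhat$ strictly deeper than those of $\Chat$. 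By joint continuity of $(z, \e) \mapsto \Pmaphat(z, p_0)$ and compactness of $\Chat$, I can shrink $r$ so that each iterate $\Pmaphat^{k}$, for $k = 1, \ldots, m$ and $\e \in D_{r}(0)$, displaces every point of $\Chat$ by less than $\mathrm{dist}(\Chat, \partial \CAhat)$, giving the required inclusion. The principal technical obstacle lies in the transversality step: controlling the domain and existence of the path-lift uniformly over the non-compact sets $\Aprimehat$ and $\Cprimehat$ is exactly where the $\Gammahat$-equivariant reduction to a compact fundamental domain is essential.
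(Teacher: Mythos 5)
Your proof is correct and follows essentially the same route as the paper: define $\Pmaphat$ by lifting $\delta_0$ along leaves of $\hat{\Fol}$ via the projection $pr_S$, use the $\hat{\Gamma}$-invariance of $\hat{\Fol}$ together with the identity $pr_S \circ \hat{\gamma} = D_{\gamma} \circ pr_S$ to recognize $\hat{\gamma}(\hat{\delta}_{\e}(z,p_0))$ as the lift of $D_{\gamma}(\delta_0)$ and thus deduce $\hat{\gamma} \circ \Pmaphat = \PmaphatD \circ \hat{\gamma}$, reduce to commutation for $\hat{\Gamma}_0$ via $D_{\gamma}(\delta_0)=\delta_0$, and obtain the containment of the $m$ iterates from $\Pmaphat|_{\e=0}=\mathrm{id}$, compactness of $\Chat$, and continuous dependence on $(z,\e)$. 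One minor point: your remark that the hypothesis ``forces $D_{\gamma}(p_0)=p_0$'' is accurate as a logical implication, but in fact $D_{\gamma}(p_0)=p_0$ holds for all $\gamma\in\Gamma$ by the standing choice $p_0 \in S - \cup_j supp(D_{\gamma_j})$ (this is what makes $\Aprimehat\times\{p_0\}$ a $\hat{\Gamma}$-invariant cross-section, as the paper notes); and your one-line treatment of the iteration bound implicitly requires that the intermediate iterates of $\Chat$ stay in the domain $\Cprimehat$ of $\Pmaphat$ before they can be shown to land in $\CAhat$ --- a short induction (or a uniform displacement bound on a compact set strictly between $\CAhat$ and $\Cprimehat$) closes this, and the paper's own proof elides the same detail.
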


\begin{proof} As usual, let $pr_S : \Disc \times S \to S$ be the
projection $(z,p) \mapsto p.$ By continuous dependance of
$\hat{\Fol}$ on parameters and initial conditions, we can choose
the radius $r$ of the parameter space so that the construction
that follows holds for any $|\e| \leq r$. Choose an arbitrary
point $(z,p_0) \in \Cprimehat \times \{p_0\}.$ If
$\hat{\varphi}^{\e}_{(z,p_0)}$ is the leaf of the perturbed
foliation $\hat{\Fol},$ passing through $(z,p_0),$ lift the loop
$\delta_0$ to a curve $\hat{\delta}_{\e}(z,p_0)$ on
$\hat{\varphi}^{\e}_{(z,p_0)}$ so that $\hat{\delta}_{\e}(z,p_0)$
covers $\delta_0$ under the projection $pr_S.$ Since $r$ is chosen
small enough, the lift $\hat{\delta}_{\e}(z,p_0)$ is contained in
the domain $\Aprimehat \times S$ and both of its endpoints are on
$\Aprimehat \times \{p_0\}.$ The first endpoint is $(z,p_0) \in
\Cprimehat \times \{p_0\}$ and the second we denote by
$\Pmaphat(z,p_0)=(\Pmaptilde(z),p_0) \in \Aprimehat \times
\{p_0\}.$ Thus, we obtain the correspondence $\Pmaphat :
\Cprimehat \times \{p_0\} \to \Aprimehat \times \{p_0\},$ which is
a smooth map close to identity. Notice, that for some integer $m >
0$ if we decrease the radius of the parameter space enough, then
by continuous dependance on parameters and initial conditions we
can make sure that for any $\e \in D_{r}(0),$ all $m$ iterations
of $\Chat \times \{p_0\}$ under $\Pmaphat$ fall inside $\CAhat
\times \{p_0\}.$

By construction, the cross-section $\Aprimehat \times \{p_0\}$ is
$\hat{\Gamma}-$invariant. Now, assume $(z,p_0) \in \Cprimehat
\times \{p_0\}$ is such that $\hat{\gamma}(z,p_0)=(\gamma(z),p_0)
\in \Cprimehat \times \{p_0\}$ for some $\hat{\gamma} \in
\hat{\Gamma}.$ As pointed out earlier, the arc
$\hat{\delta}_{\e}(z,p_0)$ is the lift of $\delta_0$ on
$\hat{\varphi}^{\e}_{(z,p_0)}$ under the projection $pr_S.$ It
connects the two points $(z,p_0) \in \Cprimehat \times \{p_0\}$
and $\Pmaphat(z,p_0) \in \Aprimehat \times \{p_0\}.$ The image
$\hat{\gamma}(\hat{\delta}_{\e}(z,p_0))$ lies on the leaf
$\hat{\varphi}^{\e}_{\hat{\gamma}(z,p_0)}$ and its endpoints are
$\hat{\gamma}(z,p_0) \in \Cprimehat \times \{p_0\}$ and
$\hat{\gamma}(\Pmaphat(z,p_0)) \in \Aprimehat \times \{p_0\}.$ We
can see that $pr_S \circ
\hat{\gamma}(z,p)=pr_S(\gamma(z),D_{\gamma}(p))=D_{\gamma}(p)=
D_{\gamma} \circ pr_S(z,p).$ 
The fact that $\hat{\delta}_{\e}(z,p_0)$ is the lift of $\delta_0$
on the leaf $\hat{\varphi}^{\e}_{(z,p_0)}$ from $\Fol$ means that
$pr_S(\hat{\delta}_{\e}(z,p_0))=\delta_0.$ Similarly, to find out
what the arc $\hat{\gamma}(\hat{\delta}_{\e}(z,p_0))$ is a lift of
we just have to project it onto $S.$ Using the property $pr_S
\circ \hat{\gamma}=D_{\gamma} \circ pr_S$ we conclude that $pr_S
\circ \hat{\gamma}(\hat{\delta}_{\e}(z,p_0))=D_{\gamma} \circ
pr_S(\hat{\delta}_{\e}(z,p_0))=D_{\gamma}(\delta_0).$ That is,
$\hat{\gamma}(\hat{\delta}_{\e}(z,p_0))$ is the lift of
$D_{\gamma}(\delta_0)$ on the leaf
$\hat{\varphi}^{\e}_{\hat{\gamma}(z,p_0)}$ under the projection
$pr_S.$ Therefore, the endpoint $\hat{\gamma}(\Pmaphat(z,p_0))$
can also be represented as $\PmaphatD(\hat{\gamma}(z,p_0)).$ Thus,
we obtain the relation $\hat{\gamma} \circ \Pmaphat = \PmaphatD
\circ \hat{\gamma}.$

The base loop $\delta_0 \subset S$ is chosen so that whenever
$\delta_0 \cdot \delta_j=0$ then $\delta_0 \cap
supp(D_{\gamma_j})=\varnothing.$ Because of this choice, if
$\gamma \in \Gamma_0$ we have the identity
$D_{\gamma}(\delta_0)=\delta_0.$ That leads to the second
equivariance relation $\hat{\gamma} \circ \Pmaphat = \Pmaphat
\circ \hat{\gamma}.$
\end{proof}

Lemma \ref{lemma about lifted Pmap} allows us to extend $\Pmaphat$
from a map on $\Cprimehat \times \{p_0\}$ to a $\hat{\Gamma}_0$ -
equivarint map on the cross-section $\Xdeltaprimehat \times
\{p_0\}.$ In particular, since $\Cdeltaprimehat \times \{p_0\}$ is
a $\hat{\Gamma}_0-$invariant open subdomain of $\Xdeltaprimehat
\times \{p_0\},$ the map $\Pmaphat$ is well defined and
$\hat{\Gamma}_0-$equivarint on it. This fact makes it possible for
the $\Pmaphat$ to descend under the covering $\Pi$ to a Poincar\'e
map defined on $\Cpzeroprime.$

\begin{cor} \label{Corollary Pmaphat extension}
The transformation $\Pmaphat$ constructed in lemma \ref{lemma
about lifted Pmap} gives rise to a map $\Pmaphat : \Xdeltaprimehat
\times\{p_0\} \to \Aprimehat \times \{p_0\}$ for the foliation
$\hat{\Fol}$ such that for any $\hat{\gamma} \in \hat{\Gamma}_0$
the equivariance relation $ \hat{\gamma} \circ \Pmaphat = \Pmaphat
\circ \hat{\gamma}$ holds. In particular, the restriction of
$\Pmaphat$ on $\Cdeltaprimehat \times \{p_0\}$ satisfies the same
equivarance relation $ \hat{\gamma} \circ \Pmaphat = \Pmaphat
\circ \hat{\gamma}$ for $\hat{\gamma} \in \hat{\Gamma}_0.$
\end{cor}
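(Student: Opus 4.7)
The plan is to force the desired equivariance by defining the extension through the formula
$$\Pmaphat(z,p_0) := \hat{\gamma}\bigl(\Pmaphat(\hat{\gamma}^{-1}(z,p_0))\bigr),$$
where $\hat{\gamma}\in\hat{\Gamma}_0$ is any element whose inverse pulls $(z,p_0)$ back into the original domain $\Cprimehat\times\{p_0\}$. The existence of such $\hat{\gamma}$ follows directly from the defining expression $\Xdeltaprimehat = \cup_{\gamma\in\Gamma_0}\gamma(\Cprimehat)$. Moreover, since the base point $p_0$ was chosen outside $\cup_j supp(D_{\gamma_j})$, every element of $\hat{\Gamma}$ preserves the second coordinate, so both $\hat{\gamma}^{-1}(z,p_0)$ and the output of the above composition land on the correct slice $\Aprimehat\times\{p_0\}$.

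The only substantive step is well-definedness. If $\hat{\gamma},\hat{\gamma}'\in\hat{\Gamma}_0$ both pull $(z,p_0)$ back into $\Cprimehat\times\{p_0\}$, I would set $\hat{\sigma} := (\hat{\gamma}')^{-1}\hat{\gamma}\in\hat{\Gamma}_0$ and note that $w:=\hat{\gamma}^{-1}(z,p_0)$ and $\hat{\sigma}(w)=(\hat{\gamma}')^{-1}(z,p_0)$ both lie in $\Cprimehat\times\{p_0\}$. This is exactly the hypothesis under which the second equivariance relation of Lemma \ref{lemma about lifted Pmap} applies and yields $\hat{\sigma}\circ\Pmaphat(w)=\Pmaphat(\hat{\sigma}(w))$. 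A one-line computation then gives $\hat{\gamma}'\bigl(\Pmaphat(\hat{\sigma}(w))\bigr)=\hat{\gamma}'\hat{\sigma}\bigl(\Pmaphat(w)\bigr)=\hat{\gamma}\bigl(\Pmaphat(w)\bigr)$, so the two candidate values coincide.

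Once well-definedness is in hand the remaining claims are automatic. Global equivariance under $\hat{\Gamma}_0$ follows directly from the construction: for any $\hat{\tau}\in\hat{\Gamma}_0$ and any valid choice $\hat{\gamma}$ at $(z,p_0)$, the element $\hat{\tau}\hat{\gamma}\in\hat{\Gamma}_0$ is a valid choice at $\hat{\tau}(z,p_0)$, and unpacking the definition gives $\Pmaphat(\hat{\tau}(z,p_0))=\hat{\tau}\hat{\gamma}\bigl(\Pmaphat(w)\bigr)=\hat{\tau}\bigl(\Pmaphat(z,p_0)\bigr)$. Smoothness is inherited from the local representation as the composition $\hat{\gamma}\circ\Pmaphat\circ\hat{\gamma}^{-1}$. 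Finally, since $\Cdeltaprimehat\times\{p_0\}$ is an open $\hat{\Gamma}_0$-invariant subset of $\Xdeltaprimehat\times\{p_0\}$, the restricted map inherits the same equivariance relation.

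The main obstacle is precisely the well-definedness check, and what makes it succeed is the specific form of the equivariance in Lemma \ref{lemma about lifted Pmap} when $\hat{\gamma}\in\hat{\Gamma}_0$: the invariant relation $\hat{\gamma}\circ\Pmaphat=\Pmaphat\circ\hat{\gamma}$, as opposed to the twisted relation $\hat{\gamma}\circ\Pmaphat=\PmaphatD\circ\hat{\gamma}$ that holds for general $\hat{\gamma}\in\hat{\Gamma}$. This in turn rests on the choice $D_\gamma(\delta_0)=\delta_0$ for $\gamma\in\Gamma_0$, which was built into the setup of $\delta_0$ in Section \ref{preliminaries} via the condition $\delta_0 \cap supp(\tilde{D}_{\gamma_j})=\varnothing$ whenever $\delta_0\cdot\delta_j=0$.
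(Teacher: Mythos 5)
Your proof is correct and follows essentially the same route as the paper's: both define the extension on each translate $\hat\gamma(\Cprimehat\times\{p_0\})$ by the conjugated map $\hat\gamma\circ\Pmaphat\circ\hat\gamma^{-1}$ and then check agreement on overlaps using the equivariance relation of Lemma~\ref{lemma about lifted Pmap}. The only difference is that you spell out the well-definedness computation in more detail than the paper does.
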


\begin{proof}
Notice that $\Gamma_0$ keeps both domains $\Xdeltaprimehat$ and
$\Cdeltaprimehat$ invariant. In other words,
$\gamma(\Xdeltaprimehat)=\Xdeltaprimehat$ and
$\gamma(\Cdeltaprimehat)=\Cdeltaprimehat$ for any $\gamma \in
\Gamma_0.$ This immediately leads to the invariance of the
cross-sections $\Xdeltaprimehat \times \{p_0\}$ and
$\Cdeltaprimehat \times \{p_0\}$ under the action of
$\hat{\Gamma}_0.$

Since $\Xdeltaprimehat = \cup_{\gamma \in \Gamma_0}
\gamma(\Cprimehat),$ we can define $\Pmaphat$ on
$\gamma(\Cprimehat) \times \{p_0\} = \hat{\gamma}(\Cprimehat
\times \{p_0\})$ as the conjugated map $\hat{\gamma} \circ
\Pmaphat \circ \hat{\gamma}^{-1} : \gamma(\Cprimehat) \times
\{p_0\} \to \Aprimehat \times \{p_0\}.$ By lemma \ref{lemma about
lifted Pmap}, for $\hat{\gamma}_1$ and $\hat{\gamma}_2 \in
\hat{\Gamma}_0,$ the two maps $\hat{\gamma}_1 \circ \Pmaphat \circ
\hat{\gamma}_1^{-1}$ and $\hat{\gamma}_2 \circ \Pmaphat \circ
\hat{\gamma}_2^{-1}$ agree on the intersection
$\hat{\gamma}_1(\Cprimehat \times \{p_0\}) \cap
\hat{\gamma}_2(\Cprimehat \times \{p_0\})$ whenever it is
nonempty. As $\Cdeltaprime$ is a $\Gamma_0-$invariant subdomain of
$\Xdeltaprimehat,$ the second statement follows immediately.
\end{proof}

\begin{cor} \label{Corollary Pmaphat descend}
The transformation $\Pmaphat : \Cdeltaprimehat \times \{p_0\} \to
\Aprimehat \times \{p_0\}$ associated with the foliation
$\hat{\Fol}$ descends to a smooth Poincar\'e map $\Pmap :
\Cpzeroprime \to \Apzeroprime$ for the foliation $\Fol$ under the
covering bundle map $\Pi : \dtimess \to E$. In other words, for
any $(z,p_0) \in \Cdeltaprimehat \times \{p_0\}$ the relation $\Pi
\circ \Pmaphat(z,p_0) = \Pmap \circ \Pi(z,p_0)$ holds.
\end{cor}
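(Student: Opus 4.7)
The plan is to verify that $\Pmaphat$ descends to a well-defined smooth map on $\Cpzeroprime$ under $\Pi$, by checking that it respects the deck-group equivalence that defines the quotient.

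First, I would establish that $\Pi$ restricted to $\Cdeltaprimehat \times \{p_0\}$ is surjective onto $\Cpzeroprime$. Using the identity $H \circ \Pi = \pi \circ pr_{\Disc}$ from Theorem \ref{bundle_topology}, the intersection $\Pi^{-1}(\Cpzeroprime) \cap (\Disc \times \{p_0\})$ equals $\pi^{-1}(\Cdeltaprime) \times \{p_0\}$, which is the $\Gamma$-orbit of $\Cdeltaprimehat \times \{p_0\}$. Since $p_0 \in S - \cup_{j=1}^{n^2} supp(D_{\gamma_j})$ by the construction of $B_{p_0}$, every $\hat{\gamma} \in \hat{\Gamma}$ fixes the second coordinate $p_0$, so each component $\gamma(\Cdeltaprimehat) \times \{p_0\}$ is identified with $\Cdeltaprimehat \times \{p_0\}$ via an element of $\hat{\Gamma}$ that preserves the horizontal slice $\Disc \times \{p_0\}$. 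Hence every point of $\Cpzeroprime$ has a preimage in $\Cdeltaprimehat \times \{p_0\}$.

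Next I would verify well-definedness. Take $(z_1,p_0),(z_2,p_0) \in \Cdeltaprimehat \times \{p_0\}$ with $\Pi(z_1,p_0)=\Pi(z_2,p_0)$. Then $(z_2,p_0)=\hat{\gamma}(z_1,p_0)$ for some $\hat{\gamma}=(\gamma,D_\gamma) \in \hat{\Gamma}$. Because components of $\pi^{-1}(\Cdeltaprime)$ are either equal or disjoint, the condition $\gamma(z_1) \in \Cdeltaprimehat$ forces $\gamma(\Cdeltaprimehat)=\Cdeltaprimehat$, so $\gamma$ lies in the stabilizer of $\Cdeltaprimehat$, which is precisely $\Gamma_0$ by the identification used throughout Section \ref{global_map_construction}. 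Thus $\hat{\gamma} \in \hat{\Gamma}_0$, and Corollary \ref{Corollary Pmaphat extension} gives $\Pmaphat(z_2,p_0) = \hat{\gamma}\bigl(\Pmaphat(z_1,p_0)\bigr)$. Applying $\Pi$ to both sides and using that $\hat{\gamma}$ is a deck transformation of $\Pi$, we obtain $\Pi(\Pmaphat(z_2,p_0))=\Pi(\Pmaphat(z_1,p_0))$. Therefore the assignment $\Pmap(\Pi(z,p_0)):=\Pi(\Pmaphat(z,p_0))$ is unambiguous, which gives the desired relation $\Pi \circ \Pmaphat = \Pmap \circ \Pi$ on $\Cdeltaprimehat \times \{p_0\}$.

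Smoothness of $\Pmap$ is then automatic: $\Pi$ is a local diffeomorphism and $\Pmaphat$ is smooth by Lemma \ref{lemma about lifted Pmap}, so locally $\Pmap = \Pi \circ \Pmaphat \circ (\Pi|_{\Cdeltaprimehat \times \{p_0\}})^{-1}$ is a composition of smooth maps. Finally, $\Pmap$ qualifies as a Poincar\'e map for $\Fol$ along $\delta_0$ because $\hat{\Fol}=\Pi^*\Fol$ identifies leaves under $\Pi$, and the arc $\hat{\delta}_\e(z,p_0)$ on a leaf of $\hat{\Fol}$, which covers $\delta_0$ under $pr_S$, projects to an arc on the leaf of $\Fol$ through $\Pi(z,p_0)$ that also covers $\delta_0$ and connects $\Pi(z,p_0)$ to $\Pmap(\Pi(z,p_0))$.

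The main obstacle I anticipate is the clean identification of $\text{Stab}_\Gamma(\Cdeltaprimehat)$ with $\Gamma_0$. This is a statement in covering-space theory about the component $\Cdeltaprimehat$ of $\pi^{-1}(\Cdeltaprime)$ containing $z_0$; it follows from tracing how the cuts $\{l_j : j \in J(\delta_0)\}$ kill the corresponding generators of $\pi_1(B)$ while leaving the other $\gamma_j$ realizable as loops in $\Cdeltaprime$, but one must argue carefully with the fundamental polygon $Q$ and the neighboring translates used to build $\Cprimehat$ so that the stabilizer computation matches the generating set chosen for $\Gamma_0$ in Section \ref{global_map_construction}.
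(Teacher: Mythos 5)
Your proof is correct and follows the same route as the paper, which dispatches this corollary with a one-line appeal to Corollary \ref{Corollary Pmaphat extension}; you simply spell out the surjectivity, well-definedness, and smoothness arguments that the paper leaves implicit. The stabilizer identification you flag as a worry does hold: if $\gamma(\Cdeltaprimehat) \cap \Cdeltaprimehat \neq \varnothing$ then, since $\Cdeltaprimehat \subset \Bdeltahat$, also $\gamma(\Bdeltahat) \cap \Bdeltahat \neq \varnothing$, so $\gamma(\Bdeltahat) = \Bdeltahat$ and hence $\gamma \in \Gamma_0$, which is exactly the fact the paper records in the proof of Lemma \ref{Lemma_periodic_orbits_cycles_lifted}.
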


\begin{proof}
The statement follows directly from corollary \ref{Corollary
Pmaphat extension}.
\end{proof}

At this point, it is not difficult to explain the role of the
index set $J(\delta_0)$ and the choice of the cuts in the
definition of $\Bdelta$ and subsequently of $\Cpzero$ and
$\Cpzeroprime$. Whenever $j \in J(\delta_0),$ the loop $\delta_0$
does not intersect the vanishing cycle $\delta_j$ and in fact is
contained in $S - supp(D_{\gamma_j}).$ Hence, it is true that
$D_{\gamma_j}(\delta_0)=\delta_0.$ As a result of this, the
descended map $\Pmap$ is univalent around the hole in
$\Cpzeroprime$ associated to the singularity $a_j.$ On the other
hand, for $i$ not in $J(\delta_0)$ the loop $\delta_0$ intersects
$\delta_i$ and so $D_{\gamma_i}(\delta_0)$ is not even free
homotopic to $\delta_0.$ Therefore the map $\Pmap$ is going to
branch switching from $\Pmap$ to $\PmapD$ when going through a
cut.

On a side note, but still worth mentioning is a fact that follows
from the constructions in the proof of lemma \ref{lemma about
lifted Pmap}. It is not difficult to see that the Poincar\'e map
does not change when the base loop $\delta_0$ has been homotoped
appropriately. In other words, if $\delta_0$ is homotopic on $S$
to another loop $\delta_0^{'}$ passing through $p_0,$ then the two
maps $\hat{P}_{\delta_0, \e}$ and $\hat{P}_{\delta^{'}_0, \e}$
will be equal, as long as $\delta_0^{'}$ is close enough to
$\delta_0$ on $S$ or the radius $r$ is kept small enough. Thus, if
we slightly wiggle $\delta_0$ on $S$ so that the base point $p_0$
is kept fixed, the resulting Poincar\'e map will stay the same.
This provides us with the opportunity to adjust the loop
$\delta_0$ if necessary. The same is true for $\Pmap.$

\subsection{Complex Structures on the Cross-Section} \label{Section
Complex Structures}

Apart from the smooth structure of a fiber bundle, the space $E$,
being a subset of $\CC,$ has a complex structure with respect to
which the foliation $\Fol$ is holomorphic and depends analytically
on the parameter $\e$. This fact provides the foliation with very
specific properties. On the other hand, the Poincar\'e map $\Pmap
: \Cpzeroprime \to \Apzeroprime$ for the perturbed foliation
$\Fol$ captures some topological properties of the foliation.
Since some of those properties are strongly related to the
holomorphic nature of the foliation, we would like our Poincar\'e
map to reflect the complex analyticity of $\Fol.$ So far $\Pmap$
is defined as a smooth map on the smooth surface $\Cpzeroprime$
and therefore our next step is to induce a complex structure on
$\Cpzeroprime$ in which the Poincar\'e transformation is
holomorphic.

Since the closure of $\Apzeroprime$ is transverse to $\Fol$, there
is an open neighborhood $\Apzerotilde$ of $\Apzeroprime$ such that
$\Apzerotilde$ is transverse to $\Fol.$ Fix $\e \in D_r(0).$ Take
a point $q_0 \in \Apzerotilde$ and a complex cross-section
$\Lqzero$ through $q_0,$ transverse to $\Fol.$ More precisely,
$\Lqzero$ is a complex segment, that is, it lies on a complex line
through $q_{0}$ and is a real two dimensional disc.

The fact that the foliation $\Fol$ is holomorphic and
$\Apzerotilde$ is smoothly embedded surface transverse to $\Fol$
provides us with convenient flow-box charts. A chart of this kind
consists of an open neighborhood $FB(q_0) \subset E$ of $q_0$ and
a biholomorphic map $$\betaqzeroe \, : \, \Disc \times \Disc
\longrightarrow FB(q_0)$$ with the following properties:
\vspace{1.5mm}

\noindent 1. \, $\betaqzeroe(0,0)=q_0;$ \vspace{1.5mm}

\noindent 2. \, $\betaqzeroe(\{\zeta\} \times \Disc)$ is a
connected component of the intersection of $FB(q_0)$  with the
leaf $\phie_{\betaqzeroe(\zeta,0)}$ through the point
$\betaqzeroe(\zeta,0)$ for any $\zeta \in \Disc;$ \vspace{1.5mm}

\noindent 3. \, $\betaqzeroe(\Disc \times \{0\})=\Lqzero;$
\vspace{1.5mm}

\noindent 4. \, The portion of $\Apzerotilde$ passing through
$FB(q_0)$ looks like the graph of a smooth map $\alphaqzeroe :
\Disc \to \Disc$ in the chart $\Disc \times \Disc.$ In other words
\begin{align*}\betaqzeroe^{-1}(FB(q_0) \cap \Apzerotilde)=
\{(\zeta,\alphaqzeroe(\zeta)) \in \Disc \times \Disc \,\, | \,\,
\alphaqzeroe : \Disc \to \Disc \,\, \text{smooth}\}.
\end{align*}

\noindent Denote by $\Uqzero$ the open subset $FB(q_0) \cap
\Apzerotilde$ of $\Apzerotilde.$ Let $pr_j : \Disc \times \Disc
\to \Disc$ be $pr_j(\zeta_1,\zeta_2)=\zeta_j,$ where $j=1,2.$
Define the diffeomorphism
\begin{align*}
\phiqzeroe \,\, &: \,\, \Uqzero \longrightarrow \Disc \,\,\, \text{by} \\
\phiqzeroe \,\, &: \,\, q \longmapsto pr_1 \circ (\betaqzeroe^{-1})\big{|}_{\Uqzero}(q) \\
\phiqzeroe^{-1} \,\, &: \,\, \zeta \longmapsto
\betaqzeroe(\zeta,\alphaqzeroe(\zeta)).
\end{align*}
Consider the family of pairs
$\Atlas_{\e}(\Apzerotilde)=\{(\Uqzero,\phiqzeroe)\, | \, q_0 \in
\Apzerotilde\}.$

\begin{lem} \label{lemma complex atlas}
The collection of charts $\Atlas_{\e}(\Apzerotilde)$ is a
holomorphic atlas for the surface $\Apzerotilde.$
\end{lem}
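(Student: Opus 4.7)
The plan is to verify the three defining properties of a holomorphic atlas in turn, reducing the holomorphicity of the transition maps to the fact that the flow-box charts are biholomorphic straighteners of the holomorphic foliation $\Fol$.

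First, I would check that each $\phiqzeroe : \Uqzero \to \Disc$ is a smooth diffeomorphism onto an open subset of $\Disc$, and that $\{\Uqzero\}_{q_0 \in \Apzerotilde}$ covers $\Apzerotilde$. Since $\betaqzeroe$ is a biholomorphism onto $FB(q_0)$ and property 4 gives $\betaqzeroe^{-1}(\Uqzero) = \{(\zeta, \alphaqzeroe(\zeta))\,:\, \zeta \in \Disc\}$, the map $\phiqzeroe$ is a smooth diffeomorphism with the stated inverse $\zeta \mapsto \betaqzeroe(\zeta, \alphaqzeroe(\zeta))$, and the cover condition is immediate since $q_0 \in \Uqzero$.

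The substantive step is the holomorphicity of the transition $\phiqonee \circ \phiqzeroe^{-1}$ on each nonempty overlap $\phiqzeroe(\Uqzero \cap \Uqone)$. The key observation is that both $\betaqzeroe$ and $\betaqonee$ are biholomorphic charts that carry the standard vertical fibration $\{\zeta\} \times \Disc$ onto leaves of $\Fol$ (property 2). Consequently, wherever it is defined, the transition $\betaqonee^{-1} \circ \betaqzeroe$ is a biholomorphism between open subsets of $\Disc \times \Disc$ that sends each leaf $\{\zeta\} \times \Disc$ into a leaf $\{\zeta^{\prime}\} \times \Disc$. This forces its first coordinate to depend only on $\zeta$, so that there is a holomorphic function $\tau$ with
\[
pr_1 \circ \betaqonee^{-1} \circ \betaqzeroe(\zeta, w) = \tau(\zeta)
\]
for every $(\zeta, w)$ in the common domain.

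Putting the pieces together yields
\[
\phiqonee \circ \phiqzeroe^{-1}(\zeta) = pr_1 \circ \betaqonee^{-1}\bigl(\betaqzeroe(\zeta, \alphaqzeroe(\zeta))\bigr) = \tau(\zeta),
\]
which is holomorphic in $\zeta$. The subtle point, and the one I expect to be the main obstacle for a reader, is that the merely smooth graph function $\alphaqzeroe$ drops out of the final expression: the transition between transversals is a leaf invariant determined by $\Fol$ together with its biholomorphic trivializations, and does not see the particular smooth embedding of $\Apzerotilde$. Once this is noted, all three axioms of a holomorphic atlas have been checked and $\Atlas_{\e}(\Apzerotilde)$ is a holomorphic atlas on $\Apzerotilde$.
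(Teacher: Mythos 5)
Your argument is correct and follows essentially the same route as the paper's proof: both identify the transition map as $pr_1 \circ \betaqonee^{-1} \circ \betaqzeroe$ evaluated along the graph of $\alphaqzeroe$, then use the fact that both flow-box biholomorphisms carry the vertical plaques $\{\zeta\}\times\Disc$ to leaves of $\Fol$, so the first coordinate of the transition is independent of the second variable and the smooth graph function drops out. The only small item the paper records beyond what you write is that the resulting transition map also depends holomorphically on $\e$, which is used later.
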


\begin{proof}
Let $q_1, q_2$  be two points from the surface $\Apzerotilde$
with chart neighborhoods $\Uqone \cap \Uqtwo \neq \varnothing.$
Let $V_j = \phiqje(\Uqone \cap \Uqtwo)$ for $j=1,2.$ Consider the
diffeomorphism $\phiqtwoe \circ \phiqonee^{-1} : V_1 \to V_2.$ For
a point $\zeta \in V_1$ compute
\begin{align*}
\phiqtwoe \circ \phiqonee^{-1}(\zeta) &= pr_1 \circ
(\betaqtwoe^{-1})\big{|_{\Uqtwo}} \circ
\betaqonee(\zeta,\alphaqonee(\zeta)) \\
&= pr_1 \circ(\betaqtwoe^{-1} \circ
\betaqonee)(\zeta,\alphaqonee(\zeta)).
\end{align*}
The map $$\betaqtwoe^{-1} \circ \betaqonee \, : \,
\betaqonee^{-1}(FB(q_1) \cap FB(q_2)) \, \longrightarrow \,
\betaqtwoe^{-1}(FB(q_1) \cap FB(q_2))$$ is a holomorphic
isomorphism. Let us take a local leaf $\{\zeta\} \times \Disc$
contained in the open set $\betaqonee^{-1}(FB(q_1) \cap FB(q_2)).$
Then, $\betaqonee(\{\zeta\} \times \Disc)$ lies on the leaf
$\phie_{\betaqonee(\zeta,0)}$ from the foliation $\Fol$. Since
$\phie_{\betaqonee(\zeta,0)}$ passes through the intersection
$FB(q_1) \cap FB(q_2),$ there exists $\zeta^{\prime} \in \Disc$
such that
$\phie_{\betaqtwoe(\zeta^{\prime},0)}=\phie_{\betaqonee(\zeta,0)}.$
It follows from here that $\betaqtwoe(\{\zeta^{\prime}\} \times
\Disc)$ lies on the leaf
$\phie_{\betaqtwoe(\zeta^{\prime},0)}=\phie_{\betaqonee(\zeta,0)}.$
Hence, $$\betaqtwoe^{-1} \circ \betaqonee(\{\zeta\} \times
\Disc)=\{\zeta^{\prime}\} \times \Disc.$$ Therefore $$pr_1 \circ
\betaqtwoe^{-1} \circ \betaqonee(\zeta,\xi)=pr_1 \circ
\betaqtwoe^{-1} \circ \betaqonee(\zeta,0)$$ for all $\xi \in
\Disc$. In particular, $$\phiqtwoe \circ \phiqonee(\zeta)=pr_1
\circ \betaqtwoe^{-1} \circ
\betaqonee(\zeta,\alphaqonee(\zeta))=pr_1 \circ \betaqtwoe^{-1}
\circ \betaqonee(\zeta,0),$$ is a holomorphic transformation with
respect to $\zeta.$ Notice, that in fact the transition map
$\phiqtwoe \circ \phiqonee(\zeta)$ depends holomorphically on $\e$
as well.
\end{proof}

The choice of complex structure on the surface $\Apzerotilde$ is
justified by the next lemma. As it turns out, the map $\Pmap$ is
holomorphic in the complex structure $\Atlas_{\e}(\Apzerotilde).$

\begin{lem} \label{Lemma Complex Pmap}
The Poincar\'e map $\Pmap : \Cdeltaprime \to \Apzeroprime$
associated to the foliation $\Fol$ is holomorphic in the complex
structure defined by the atlas $\Atlas_{\e}(\Apzerotilde)$ and
depends analytically with respect to the parameter $\e$.
\end{lem}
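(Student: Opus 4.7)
The plan is to verify holomorphicity of $\Pmap$ locally at each $q \in \Cpzeroprime.$ I would pick a chart $(\Uqzero, \phiqzeroe)$ from $\Atlas_{\e}(\Apzerotilde)$ around $q$ and a chart $(\Uqone, \phiqonee)$ around $q' = \Pmap(q) \in \Apzeroprime,$ and then verify that the coordinate representation $\phiqonee \circ \Pmap \circ \phiqzeroe^{-1}$ is a holomorphic function of $\zeta \in \phiqzeroe(\Uqzero \cap \Pmap^{-1}(\Uqone))$ depending analytically on $\e \in D_r(0).$

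The key geometric point is that, by construction, the chart $\phiqzeroe$ identifies a point of $\Apzerotilde \cap FB(q_0)$ with the label $\zeta$ of the local leaf of $\Fol$ that contains it. Consequently, if a pair $\tilde q \in \Uqzero$ and $\Pmap(\tilde q) \in \Uqone$ happens to lie in a common flow-box, then $\phiqonee \circ \Pmap(\tilde q)$ depends holomorphically on $\phiqzeroe(\tilde q)$: this is precisely the calculation already carried out in the proof of Lemma \ref{lemma complex atlas}, showing that $pr_1 \circ \betaqtwoe^{-1} \circ \betaqonee(\zeta,\xi)$ is independent of $\xi$ and holomorphic in $\zeta.$ In general the lifted arc $\hat{\delta}_{\e}(q)$ from $q$ to $\Pmap(q)$ on the leaf $\phie_q$ will not sit inside a single flow-box, so I would cover this compact arc by a finite chain of flow-box neighborhoods $FB(q^{(0)}), \dots, FB(q^{(N)})$ with $q^{(0)} = q,\ q^{(N)} = q',$ and consecutive flow-boxes overlapping along a piece of the arc. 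Each overlap yields a holomorphic local leaf identification by the same computation, and the composition of these $N$ identifications coincides with $\phiqonee \circ \Pmap \circ \phiqzeroe^{-1}$ on a neighborhood of $\phiqzeroe(q);$ a finite composition of holomorphic maps is holomorphic.

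For the analytic dependence on $\e,$ I would use that the one-form $dH + \e\omega$ depends polynomially on $\e,$ so by the standard theorem on analytic dependence of solutions of a holomorphic ODE on parameters, the flow-box trivializations $\betaqzeroe$ can be chosen to depend holomorphically on $\e \in D_r(0).$ Each local leaf transition, and hence their finite composition, then depends jointly holomorphically on the chart variable and on $\e.$ The main technical obstacle I foresee is arranging the finite chain of flow-boxes uniformly in $\e$: I would fix the chain using the arc $\hat{\delta}_{0}(q)$ for $\e=0,$ and then invoke the continuous dependence of the lift $\hat{\delta}_{\e}(q)$ on $\e$ used in the proof of Lemma \ref{lemma about lifted Pmap}, shrinking $r$ if necessary so that the chain covers $\hat{\delta}_{\e}(q)$ for every $|\e|\le r.$ Once this uniformity is in place, the remainder of the argument is a routine composition of holomorphic maps with analytic parameter dependence.
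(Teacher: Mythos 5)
Your proposal is correct, and the underlying geometric insight --- that the chart $\phiqzeroe$ simply records the label of the local leaf of $\Fol$ through a point of $\Apzerotilde,$ so that holomorphicity of the chart representation of $\Pmap$ reduces to holomorphicity of a holonomy of a holomorphic foliation --- is exactly the one driving the paper's argument as well. The technical route, however, is different. The paper introduces for each $q_0 \in \Apzerotilde$ the auxiliary projection $\barphiqzeroe \colon \Uqzero \to \Lqzero$ (sliding along local leaves from the smooth cross-section $\Apzerotilde$ to the genuinely holomorphic cross-section $\Lqzero$), verifies the factorization $\betaqzeroe^{-1}\circ\barphiqzeroe = \phiqzeroe,$ and then exhibits a single explicit three-piece leafwise path $\lambda^{\e}(q')$ joining $q'\in\Lqone$ to $\barphiqtwoe\circ\Pmap\circ\barphiqonee^{-1}(q')\in\Lqtwo$; at that point it invokes, as a black box, the theorem on analytic dependence of solutions of holomorphic foliations on initial conditions and parameters from \cite{IY} to conclude that this holonomy between the two holomorphic cross-sections $\Lqone$ and $\Lqtwo$ is jointly holomorphic in $(q',\e),$ and undoes the conjugation. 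You instead rederive the content of that black-box theorem directly: you cover the lifted arc by a finite chain of flow-boxes, observe that each consecutive overlap yields a local leaf-label identification that is holomorphic in the label (and, by analytic parameter-dependence of the flow-box trivializations, jointly in $\e$), and identify the finite composition with $\phiqonee\circ\Pmap\circ\phiqzeroe^{-1}.$ Your handling of the $\e$-uniformity of the flow-box chain (fix the chain at $\e=0$, then shrink $r$ using continuous dependence) is the right way to close the one technical gap that this more hands-on route opens. The paper's version is shorter because it delegates the compactness/chaining argument to the cited reference; your version is more self-contained and makes the mechanism visible.
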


\begin{proof}
Let $q_1 \in \Cdeltaprime$ and $q_2 \in \Apzeroprime$ be two
points such that $q_2=\Pmap(q_1).$ Find charts $\Uqone$ and
$\Uqtwo$ such that $\Pmap(\Uqone) \subset \Uqtwo$ and $\Lqone$ and
$\Lqtwo$ are the corresponding cross-sections. According to the
definition for a holomorphic transformation with respect to a
complex atlas, $\Pmap$ is considered holomorphic whenever
$$\phiqtwoe \circ \Pmap \circ \phiqonee^{-1} : \Disc
\longrightarrow \Disc$$ is holomorphic.

For an arbitrary $q_0 \in \Apzerotilde$ define the map
\begin{align*}
  \barphiqzeroe \, &: \, \Uqzero  \longrightarrow \Lqzero,\\
  \barphiqzeroe \, &: \, q \longmapsto \betaqzeroe\bigl(pr_1 \circ
  \betaqzeroe^{-1}(q),0\bigr), \\
  \barphiqzeroe^{-1} \, &: \, q^{\prime} \longmapsto \betaqzeroe\Bigl(pr_1 \circ
  \betaqzeroe^{-1}(q^{\prime}), \alphaqzeroe\bigl(pr_1 \circ
  \betaqzeroe^{-1}(q^{\prime})\bigr)\Bigr).
\end{align*}
When $\barphiqzeroe$ is pre-composed with $\betaqzeroe^{-1},$ the
following chain of equalities holds:
\begin{align*}
\betaqzeroe^{-1} \circ \barphiqzeroe(q)&= \betaqzeroe^{-1} \circ
\betaqzeroe\bigl(pr_1 \circ \betaqzeroe(q),0\bigr)\\
&= pr_1 \circ \betaqzeroe^{-1}(q)\\
&= \phiqzeroe(q).
\end{align*}

Let us look at the smooth map $$\barphiqzeroe \circ \Pmap \circ
\barphiqzeroe^{-1} : \Lqone \to \Lqtwo.$$ As noted, $\barphiqzeroe
\circ \Pmap \circ \barphiqzeroe^{-1}(\Lqone) \subset \Lqtwo.$ For
$j=1,2$ and a point $q^{\prime} \in \Lqj,$ the image
$\zeta^{\prime}=pr_1(\betaqje^{-1}(q^{\prime}))$ belongs to
$\Disc.$ The straight segment
$$\Upsilon_{q_j,\e}=[0,\alphaqje(\zeta^{\prime})]$$ on $\Disc$
connects $0$ to the point $\alphaqje(\zeta^{\prime})$ so
$\{\zeta^{\prime}\} \times \Upsilon_{q_j,\e}$ lies on the local
leaf $\{\zeta^{\prime}\} \times \Disc.$ Therefore
$$\lambda_j^{\e}(q^{\prime})=\betaqje(\{\zeta^{\prime}\} \times
\Upsilon_{q_j,\e})$$ is an arc on $\phie_{q^{\prime}} \cap
FB(q_j)$ with one endpoint $q^{\prime} \in \Lqj$ and the second
one being
$$\betaqje\Bigl(pr_1 \circ \betaqje^{-1}(q^{\prime}),\,\alphaqje\bigl(pr_1 \circ \betaqje^{-1}(q^{\prime})\bigr)\Bigr)
= \barphiqje^{-1}(q^{\prime}) \in \Uqj.$$

Remember that the lifted Poincar\'e transformation $\Pmaphat$ was
constructed in lemma \ref{lemma about lifted Pmap} as a
correspondence between the endpoints $(\tilde{z},p_0)$ and
$\Pmaphat(\tilde{z},p_0)$ of the path
$\hat{\delta}_{\e}(\tilde{z},p_0)$. This path was obtained as the
lift of $\delta_0 \subset S$ to the leaf
$\hat{\varphi}^{\e}_{(\tilde{z},p_0)}$ of the foliation
$\hat{\Fol}$ under the projection $pr_S.$ Let
$\delta_{\e}(\tilde{q})=\Pi(\hat{\delta}_{\e}(\tilde{z},p_0)),$
where $\tilde{q}=\Pi(\tilde{z},p_0) \in \Cpzeroprime.$ Consider
the path
$$\lambda^{\e}(q^{\prime})=\lambda^{\e}_1(q^{\prime}) \cdot \delta_{\e}\bigl(\barphiqonee^{-1}(\tilde{q})\bigr)
\cdot \Bigl(\lambda^{\e}_2\bigl(\barphiqtwoe \circ \Pmap \circ
\barphiqonee^{-1}(q^{\prime})\bigr)\Bigr)^{-1}.$$ The path
connects the point $q^{\prime} \in \Lqone$ to the point
$P_{q_1,q_2,\e}(q^{\prime})=\barphiqtwoe \circ \Pmap \circ
\barphiqonee^{-1}(q^{\prime}).$ By construction,
$\lambda^{\e}(q^{\prime})$ lies on the leaf $\phie_{q^{\prime}}$
and varies continuously with respect to both the endpoint
$q^{\prime} \in \Lqone$ and the parameter $\e \in D_r(0).$ The
other endpoint $P_{q_1,q_2,\e}(q^{\prime})$ belongs to the
intersection $\varphi^{\e}_{q^{\prime}} \cap \Lqtwo.$ As we
already know, $\Lqone$ and $\Lqtwo$ are holomorphic cross-sections
and $\varphi^{\e}_{q^{\prime}}$ is a leaf of the holomorphic
foliation $\Fol$ depending analytically on $\e.$ Then, by analytic
dependence of the foliation on parameters and initial conditions
\cite{IY}, it follows that $P_{q_1,q_2,\e}(q^{\prime})$ depends
analytically on $(q^{\prime},\e).$ In other words, the map
$$P_{q_1,q_2,\e}(q^{\prime})=\barphiqtwoe \circ \Pmap \circ \barphiqonee^{-1} \, : \,
\Lqone \longrightarrow \Lqtwo$$ is a holomorphic map depending
holomorphically on $\e.$ Conjugating with the holomorphic maps
$\betaqonee$ and $\betaqtwoe$ we conclude that
$$(\betaqtwoe^{-1})|_{\Lqtwo} \circ \barphiqtwoe \circ \Pmap \circ \barphiqonee^{-1}
\circ (\betaqonee)|_{(\{0\}\times \Disc)}= \phiqtwoe \circ \Pmap
\circ \phiqonee^{-1} : \Disc \rightarrow \Disc$$ is also
holomorphic and depends analytically on $\e.$
\end{proof}

\subsection{Periodic Orbits and Complex Cycles}

We proceed with the study of the Poincar\'e maps $\Pmap$ and
$\Pmaphat.$ More precisely, we are interested in the relationship
between their periodic orbits and the complex cycles of the
perturbed foliation $\Fol.$

First, we start with a more general result.

\begin{lem} \label{Lemma_periodic_orbits_cycles_lifted}
Let $r>0$ be the radius obtained in lemma \ref{lemma about lifted
Pmap}. Let $$\Pmaphat : \Xdeltaprimehat \times \{p_0\} \to
\Aprimehat \times \{p_0\}$$ be the map defined in corollary
\ref{Corollary Pmaphat extension}, where $\e \in D_r(0)$. Then,
the following statements are true: \vspace{1.5mm}

\noindent {\em1.} Assume $\Pmaphat$ has a periodic orbit
$((z_1,p_0),...,(z_m,p_0))$ in $\Xdeltaprimehat \times \{p_0\}.$
Then the foliation $\Fol$ has a marked complex cycle
$(\Delta_{\e}, q_{\e})$ with a base point $q_{\e}=\Pi(z_1,p_0)$
and a representative $\delta_{\e}$ contained in $\EAprime.$
\vspace{1.5mm}

\noindent {\em 2.} For an arbitrary representative
$\delta^{\prime}_{\e}$ of the marked complex cycle $(\Delta_{\e},
q_{\e})$, if $\delta^{\prime}_{\e}$ is contained in $\Edelta$ then
it is $\Dgamma(\delta_0),m-$fold vertical for some $\gamma \in
\Gamma$. Moreover, if $z_1$ belongs to $\Cdeltaprimehat \subset
\Xdeltaprimehat,$ then $\gamma \in \Gamma_0$ and thus,
$\delta^{\prime}_{\e}$ is $\delta_0,m-$fold vertical. Otherwise,
if $z_1$ is in $\Xdeltaprimehat - \Cdeltaprimehat,$ then $\gamma
\in \Gamma-\Gamma_0$ and therefore $\delta^{\prime}_{\e}$ is not
$\delta_0,m-$fold vertical.
\end{lem}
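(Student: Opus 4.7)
The plan is to build the cycle in part (1) by splicing the Poincar\'e lifts over the points of the periodic orbit, and then, in part (2), to recover the free homotopy type of any representative lying in $\Edelta$ by lifting it into the trivial bundle $\dtimess$ and exploiting the simple connectedness of $\Bdeltahat$.

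For part (1), Lemma \ref{lemma about lifted Pmap} furnishes, for each $j=1,\dots,m$, an arc $\hat{\delta}_{\e}(z_j,p_0)$ on the leaf $\hat{\varphi}^{\e}_{(z_j,p_0)}$ of $\hat{\Fol}$, contained in $\Aprimehat\times S$, joining $(z_j,p_0)$ to $\Pmaphat(z_j,p_0)=(z_{j+1},p_0)$ (indices mod $m$). Consecutive endpoints match and all segments lie on the same leaf, so their concatenation $\hat{\delta}_{\e}$ is a closed loop based at $(z_1,p_0)$ on that leaf, contained in $\Aprimehat\times S$. Setting $\delta_{\e}=\Pi(\hat{\delta}_{\e})$ and $q_{\e}=\Pi(z_1,p_0)$ produces a loop on the leaf $\phie_{q_{\e}}$ of $\Fol$ inside $\EAprime$. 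Nontriviality follows from $pr_S(\hat{\delta}_{\e})=\delta_0^m$, which is nontrivial in $\pi_1(S,p_0)$: any nullhomotopy of $\delta_{\e}$ inside its leaf would lift to one of $\hat{\delta}_{\e}$ in $\dtimess$ and project by $pr_S$ to a nullhomotopy of $\delta_0^m$ in $S$, a contradiction.

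For part (2), let $\delta^{\prime}_{\e}$ be any representative of $(\Deltae,q_{\e})$ lying in $\Edelta$. Homotopy lifting forces the lift $\hat{\delta}^{\prime}_{\e}$ starting at $(z_1,p_0)$ to close up and to be based-homotopic to $\hat{\delta}_{\e}$ inside $\dtimess$. The inclusion $\delta^{\prime}_{\e}\subset\Edelta$ gives $\hat{\delta}^{\prime}_{\e}\subset\pi^{-1}(\Bdelta)\times S$; connectedness together with disjointness of the translates of $\Bdeltahat$ confines $\hat{\delta}^{\prime}_{\e}$ to a single slab $\gamma^{\prime}(\Bdeltahat)\times S$, where $\gamma^{\prime}\in\Gamma$ is determined by $z_1\in\gamma^{\prime}(\Bdeltahat)$. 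I transport by $\hat{\gamma}^{\prime -1}$ into $\Bdeltahat\times S$ and invoke the deformation retraction $R$ from Section \ref{Section_Vertical_Cycles}: the simple connectedness of $\Bdeltahat$ identifies the free homotopy class of the resulting loop with $\{z_0\}\times pr_S(\hat{\gamma}^{\prime -1}(\hat{\delta}^{\prime}_{\e}))$. The identity $pr_S\circ\hat{\gamma}^{\prime -1}=D_{\gamma^{\prime}}^{-1}\circ pr_S$ together with $pr_S(\hat{\delta}^{\prime}_{\e})\sim\delta_0^m$ in $S$ (inherited from the based homotopy in $\dtimess$) rewrites this class as $\{z_0\}\times D_{\gamma^{\prime}}^{-1}(\delta_0)^m$. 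Reapplying $\hat{\gamma}^{\prime}$, projecting by $\Pi$, and using $\Pi\circ\hat{\gamma}^{\prime}=\Pi$ yields $\delta^{\prime}_{\e}\sim D_{\gamma}(\delta_0)^m$ inside $\Edelta$ with $\gamma=(\gamma^{\prime})^{-1}$, which is exactly the $D_{\gamma}(\delta_0),m$-fold vertical property.

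The two sub-cases split according to whether $z_1\in\Bdeltahat$. If $z_1\in\Cdeltaprimehat\subset\Bdeltahat$, then $\gamma^{\prime}(\Bdeltahat)\cap\Bdeltahat\neq\varnothing$ forces $\gamma^{\prime}\in\Gamma_0$, hence $\gamma\in\Gamma_0$; since each generator of $\Gamma_0$ is a Dehn twist whose support is disjoint from $\delta_0$, one has $D_{\gamma}(\delta_0)\sim\delta_0$ on $S$ and $\delta^{\prime}_{\e}$ is $\delta_0,m$-fold vertical. If instead $z_1\in\Xdeltaprimehat-\Cdeltaprimehat$, inspection of $\Cprimehat=\Chat\cup\bigcup_{j=1}^{n^2}\bigl(\gamma_j(Q^{\prime})\cup\gamma_j^{-1}(Q^{\prime})\bigr)$ together with the $\Gamma_0$-orbit structure of $\Xdeltaprimehat$ places $z_1$ inside $\gamma^{\prime}(\Bdeltahat)$ for some $\gamma^{\prime}\notin\Gamma_0$, so $\gamma\in\Gamma-\Gamma_0$. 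Were $\delta^{\prime}_{\e}$ also $\delta_0,m$-fold vertical, Proposition \ref{TopologyOfCycles}(1) applied to both representations would equate $[\delta_0^m]$ and $[D_{\gamma}(\delta_0)^m]$ as free homotopy classes on $S$, and root-uniqueness in the free group $\pi_1(S)$ together with the primitivity of $\delta_0$ would force $D_{\gamma}(\delta_0)\sim\delta_0$ on $S$, contradicting $\gamma\notin\Gamma_0$. The main technical obstacle is the bookkeeping of base points under the transport by $\hat{\gamma}^{\prime}$ and the identification of the resulting class on $S$ via the retraction; the last case further relies on the correct root-uniqueness statement in the surface group.
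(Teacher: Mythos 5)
Your proof follows essentially the same route as the paper's: in part (1) you concatenate the Poincar\'e lifts of $\delta_0$ over the points of the periodic orbit and project by $\Pi$, and in part (2) you lift an arbitrary representative to $\Disc\times S$, confine it to a single slab $\gamma^{\prime}(\hat{B}_{\delta_0})\times S$, and read off its free homotopy class in $\Edelta$ as $\bigl(D_{\gamma^{\prime}}^{-1}(\delta_0)\bigr)^m$ via the retraction onto $\{z_0\}\times S$. The minor additions---a self-contained nontriviality argument in part (1), and the root-uniqueness step closing the last subcase of part (2)---supply details that the paper either cites to \cite{PL55}, \cite{PL57} or asserts without elaboration, and transporting by $(\hat{\gamma}^{\prime})^{-1}$ rather than conjugating the retraction by $\hat{\gamma}$ is a cosmetic variation.
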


\begin{proof}
 Consider the map $\Pmaphat : \Xdeltaprime \times
\{p_0\} \to \Aprimehat \times \{p_0\}$ and let its orbit
$(z_1,p_0)$, ...,$(z_m,p_0)$ be periodic on $\Xdeltaprimehat
\times \{p_0\}.$ For convenience, let $(z_{m+1},p_0)=(z_1,p_0).$
Notice that since all $m$ points belong to the same orbit, they
lie on the same leaf $\hat{\varphi}^{\e}_{(z_1,p_0)}$ from the
foliation $\hat{\Fol}.$ Let $\delta(z_i,z_{i+1}),$ for
$i=1,...,m,$ be the lift of $\delta_0$ on the leaf
$\hat{\varphi}^{\e}_{(z_1,p_0)}$ so that $\delta(z_i,z_{i+1})$
covers $\delta_0$ under the projection $pr_S$ and connects the
points $(z_i,p_0)$ and $(z_{i+1},p_0).$ By the construction of the
map $\Pmaphat$ in the proof of lemma \ref{lemma about lifted
Pmap}, all arcs $\delta(z_i,z_{i+1})$ are contained in $\Aprimehat
\times S.$ Therefore, the path $\hat{\delta}_{\e} =
\cup_{i=1}^{m-1} \delta(\hat{q}_i,\hat{q}_{i+1})$ is contained in
$\Aprimehat \times S$ and goes through all the points
$(z_1,p_0),...,(z_m,p_0).$ Also, its two endpoints are $(z_1,p_0)$
and $(z_{m+1},p_0)=(z_1,p_0)$ so in fact $\hat{\delta}_{\e}$ is a
loop.

When mapping $\hat{\delta}_{\e}$ with $\Pi$ back onto $E$ we
obtain a loop $\delta_{\e}=\Pi(\hat{\delta}^{\e})$ lying on the
leaf $\varphi^{\e}_{q_{\e}}=\Pi(\hat{\varphi}^{\e}_{(z_1,p_0)})$
from the perturbed foliation $\Fol.$ Moreover, $\delta_{\e}$ is
contained in $\EAprime=\Pi(\Aprimehat \times S).$ As discussed in
\cite{PL55} and \cite{PL57}, the loop $\delta_{\e}$ is non trivial
on $\varphi^{\e}_{q_{\e}}$ and defines a marked complex cycle
$(\Delta_{\e},q_{\e}).$

Let us now look at an arbitrary representative
$\delta^{\prime}_{\e}$ of the marked complex cycle $(\Delta_{\e},
q_{\e})$ and let us assume $\delta^{\prime}_{\e}$ is contained in
$\Edelta.$ By assumption, $\delta_{\e}^{\prime}$ and $\delta_{\e}$
are representatives of the same marked cycle
$(\Delta_{\e},q_{\e})$ for the foliation $\Fol.$ This implies that
there exists a homotopy $\delta(t)$ on the leaf
$\varphi^{\e}_{q_{\e}}$ between the two loops, keeping the base
point $q_{\e}$ fixed. Since the leaf is contained in $E,$ the
homotopy $\delta(t)$ takes place inside $E.$ As pointed out
earlier, $\delta_{\e}$ lifts to a loop $\hat{\delta}_{\e}$
contained in $\Aprimehat \times S$ and passing through
$(z_1,p_0)$. By the homotopy lifting property for covering spaces
\cite{H}, $\delta(t)$ lifts to a homotopy $\hat{\delta}(t)$ inside
$\Disc \times S$ so that $\Pi(\hat{\delta}(t))=\delta(t).$ Since
$\hat{\delta}(0)= \hat{\delta}_{\e}$ is a loop, then
$\hat{\delta}(1)$ is also a loop that passes through $(z_1,p_0)$
and $\Pi(\hat{\delta}(1))=\delta(1)=\delta_{\e}^{\prime}.$ Let
$\hat{\delta}^{\prime}_{\e}=\hat{\delta}(1).$ Thus,
$\hat{\delta}^{\prime}_{\e}$ is homotopic inside $\Disc \times S$
to $\hat{\delta}_{\e}$ via $\hat{\delta}(t)$ relative to the base
point $(z_1,p_0).$

It follows from the notations in Section
\ref{Section_Vertical_Cycles} that $\Pi(\gamma(\Bdelta) \times
S)=\Edelta$ for any $\gamma \in \Gamma.$ Since
$\delta_{\e}^{\prime}$ is contained in $\Edelta,$ the loop
$\hat{\delta}^{\prime}_{\e}$ is contained in $\gamma(\Bdeltahat)
\times S,$ where $\gamma$ is chosen so that $z_1 \in
\gamma(\Bdeltahat).$ Notice that $\gamma(\Bdeltahat)=\Bdeltahat$
if and only if $\gamma \in \Gamma_0.$ Consider the following
deformation retractions
\begin{eqnarray*}
R^{\prime}_{\gamma}&=&\hat{\gamma} \circ R^{\prime} \circ
\hat{\gamma}^{-1} \, : \, \Disc \times S \, \longrightarrow \,
\{\gamma(z_0)\} \times S \,\,\,\,\, \text{and} \\
R_{\gamma} &=& \hat{\gamma} \circ R \circ \hat{\gamma}^{-1} \, :
\, \gamma(\Bdeltahat) \times S \, \longrightarrow \,
\{\gamma(z_0)\} \times S,
\end{eqnarray*}
where $R^{\prime}$ and $R$ are defined in Section
\ref{Section_Vertical_Cycles}. Then,
$R^{\prime}_{\gamma}(\hat{\delta}(t)) = \{\gamma(z_0)\} \times
pr_S(\hat{\delta}(t))$ is a homotopy on $\{\gamma(z_0)\} \times S$
between the loops $\{\gamma(z_0)\} \times pr_S(\hat{\delta}_{\e})$
and $\{\gamma(z_0)\} \times pr_S(\hat{\delta}^{\prime}_{\e}).$ By
construction, $pr_S(\hat{\delta}_{\e})=\delta_0^m.$ Therefore,
\begin{eqnarray*}
   \Pi(\{\gamma(z_0)\} \times pr_S(\hat{\delta}_{\e})) &=& \Pi \circ
        \hat{\gamma}(\{z_0\} \times \Dgamma^{-1}(\delta_0^m)) \\
        &=& \Pi(\{z_0\} \times \Dgamma^{-1}(\delta_0^m)) \\
        &=& \Dgamma^{-1}(\delta_0^m) \,\,\,\, \text{and} \\
   \Pi(\{\gamma(z_0)\} \times pr_S(\hat{\delta}_{\e}^{\prime})) &=&
        \Pi \circ \hat{\gamma}(\{z_0\} \times \Dgamma^{-1}\circ
        pr_S(\hat{\delta}_{\e}^{\prime}))\\
        &=& \Dgamma^{-1}(pr_S(\hat{\delta}_{\e}^{\prime})).
\end{eqnarray*}
are homotopic on the fiber $S.$ With the help of the fact that the
loop $\hat{\delta}^{\prime}_{\e}$ is contained in
$\gamma(\Bdeltahat) \times S,$ we deduce that $\{\gamma(z_0)\}
\times pr_S(\hat{\delta}^{\prime}_{\e}) =
R^{\prime}_{\gamma}(\hat{\delta}^{\prime}_{\e}) =
R_{\gamma}(\hat{\delta}^{\prime}_{\e}).$ But $R_{\gamma}$ is a
deformation retraction of $\gamma(\Bdeltahat) \times S$ onto
$\{\gamma(z_0)\} \times S,$ so $\hat{\delta}^{\prime}_{\e}$ is
free homotopic to $\{\gamma(z_0)\} \times
pr_S(\hat{\delta}^{\prime}_{\e})$ inside $\gamma(\Bdeltahat)
\times S.$ This fact immediately implies that
$\delta^{\prime}_{\e}=\Pi(\hat{\delta}^{\prime}_{\e})$ is free
homotopic to
$\Dgamma^{-1}(pr_S(\hat{\delta}^{\prime}_{\e}))=\Pi(\{\gamma(z_0)\}
\times pr_S(\hat{\delta}^{\prime}_{\e}))$ inside $\Edelta =
\Pi(\gamma(\Bdeltahat) \times S).$ Therefore,
$\delta^{\prime}_{\e}$ is free homotopic to
$\Dgamma^{-1}(\delta_0^m)$ inside $\Edelta.$ Since $z_1$ is from
$\Xdeltaprimehat,$ there are two options. Either $z_1 \in
\Cdeltaprimehat \cap \Xdeltaprimehat$ or $z_1 \in \Xdeltaprimehat
- \Cdeltaprimehat.$ In the first case, $\Cdeltaprimehat \subset
\Bdeltahat$ so $\gamma \in \Gamma_0$ and therefore
$\Dgamma^{-1}(\delta_0^m)=\delta_0^m$ which means that
$\delta^{\prime}_{\e}$ is $\delta_0,m-$fold vertical. In the
second case, due to the identity $\Cdeltaprimehat=\Bdeltahat \cap
\Xdeltaprimehat,$ the point $z_1$ does not belong to the domain
$\Bdeltahat,$ so $\gamma \in \Gamma - \Gamma_0$ and therefore
$\Dgamma^{-1}(\delta_0^m)$ is not even free homotopic to
$\delta_0^m$ on the fiber $S$ which implies that
$\delta^{\prime}_{\e}$ is not $\delta_0,m-$fold vertical.
\end{proof}

The lemma above leads to a corollary that settles part of Theorem
\ref{thm-pmap}.

\begin{cor}\label{corollary about peridoic orbits and cycles}
Let $r>0$ be the radius obtained in lemma \ref{lemma about lifted
Pmap}. Let $\Pmap : \Cpzeroprime \to \Apzeroprime$ be the
Poincar\'e map for $\Fol$ as described in corollary \ref{Corollary
Pmaphat descend}, where $\e \in D_r(0)$. Then, the following
statements are true:

\noindent {\em1.} If $\Pmap$ has a periodic orbit of period $m$ in
$\Cpzeroprime$ then the foliation $\Fol$ has a marked complex
cycle $(\Delta_{\e}, q_{\e})$ with a base point $q_{\e}$ belonging
to $\Cpzeroprime.$\vspace{2mm}

\noindent {\em 2.} The marked complex cycle $(\Delta_{\e},
q_{\e})$ has a representative $\delta_{\e}$ contained in
$\EAprime$ and passing through the points of the $m-$periodic
orbit.\vspace{2mm}

\noindent {\em 3.} If $\delta^{\prime}_{\e}$ is an arbitrary
representative of the marked complex cycle $(\Delta_{\e},
q_{\e})$, then $\delta^{\prime}_{\e}$ is contained in $\Edelta$
and is $\delta_0,m-$fold vertical if and only if its image
$H(\delta_{\e})$ is contained in $\Bdelta$ and is free homotopic
to a point inside $\Bdelta.$
\end{cor}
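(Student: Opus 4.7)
My plan is to reduce the corollary to Lemma \ref{Lemma_periodic_orbits_cycles_lifted} via descent through the covering $\Pi$. For parts 1 and 2, take an $m$-periodic orbit $q_1,\dots,q_m$ of $\Pmap$ in $\Cpzeroprime$, choose a lift $(z_1,p_0)\in\Cdeltaprimehat\times\{p_0\}$, and define $(z_{i+1},p_0)=\Pmaphat(z_i,p_0)$ iteratively. The intertwining $\Pi\circ\Pmaphat=\Pmap\circ\Pi$ from Corollary \ref{Corollary Pmaphat descend} gives $\Pi(z_i,p_0)=q_i$. Concatenating the $pr_{S}$-lifts of $\delta_0$ on the leaves of $\hat{\Fol}$ produces a path $\hat{\delta}_{\e}\subset\Aprimehat\times S$ whose $\Pi$-image $\delta_{\e}\subset\EAprime$ is a loop on the leaf $\varphi^{\e}_{q_1}$ passing through all orbit points. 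The Petrovskii-Landis nontriviality argument from \cite{PL55,PL57} (embedded in the proof of Lemma \ref{Lemma_periodic_orbits_cycles_lifted}) then produces the marked cycle $(\Delta_{\e},q_{\e})$ with $q_{\e}=q_1$.

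For the forward direction of part 3, assume $\delta^{\prime}_{\e}\subset\Edelta$ is $\delta_0,m$-fold vertical, so that there is a free homotopy inside $\Edelta$ connecting $\delta^{\prime}_{\e}$ to $\delta_0^m$. Pushing this homotopy forward by $H$ yields a free homotopy inside $H(\Edelta)=\Bdelta$ between $H(\delta^{\prime}_{\e})$ and the constant loop $H(\delta_0^m)=\{u_0\}$; in particular $H(\delta^{\prime}_{\e})\subset\Bdelta$ is null-homotopic there.

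For the reverse direction, assume $H(\delta^{\prime}_{\e})\subset\Bdelta$ is null-homotopic in $\Bdelta$. The inclusion $\delta^{\prime}_{\e}\subset H^{-1}(\Bdelta)=\Edelta$ is immediate. Lift $\delta^{\prime}_{\e}$ to $\hat{\delta}^{\prime}_{\e}\subset\dtimess$ based at $(z_1,p_0)\in\Cdeltaprimehat\times\{p_0\}$. From $H\circ\Pi=\pi\circ pr_{\Disc}$ the projection $pr_{\Disc}(\hat{\delta}^{\prime}_{\e})$ is a lift of $H(\delta^{\prime}_{\e})$ starting at $z_1\in\Bdeltahat$, and null-homotopy of $H(\delta^{\prime}_{\e})$ in $\Bdelta$ forces this lifted path to close up as a loop inside the connected component $\Bdeltahat$ of $\pi^{-1}(\Bdelta)$; hence $\hat{\delta}^{\prime}_{\e}$ itself is a loop in $\Bdeltahat\times S$. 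The same $\Pi$-monodromy computation applied to the canonical representative $\delta_{\e}$ of part 2 (which shares the $\pi_1(E)$-class of $\delta^{\prime}_{\e}$) shows that $\hat{\delta}_{\e}$ is also a loop, so the lifted orbit $(z_1,p_0),\dots,(z_m,p_0)$ is a genuine $\Pmaphat$-periodic orbit in $\Xdeltaprimehat\times\{p_0\}$. Lemma \ref{Lemma_periodic_orbits_cycles_lifted} part 2, applied with $z_1\in\Cdeltaprimehat$, now yields that $\delta^{\prime}_{\e}$ is $\delta_0,m$-fold vertical.

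I expect the main obstacle to be the monodromy step in the reverse direction: verifying that the lifted chain closes up as a true periodic orbit of $\Pmaphat$, rather than merely a chain that closes up after applying some nontrivial deck transformation. The null-homotopy hypothesis on $H(\delta^{\prime}_{\e})$ is precisely what rules out a shift by some $\gamma\in\Gamma-\{\mathrm{id}\}$, while starting the lift in $\Cdeltaprimehat$ (rather than in some translate $\gamma(\Cdeltaprimehat)$ with $\gamma\in\Gamma-\Gamma_0$) pins down the correct component of $\pi^{-1}(\Bdelta)$. Without this pinning, Lemma \ref{Lemma_periodic_orbits_cycles_lifted} would at best deliver $D_{\gamma}(\delta_0),m$-fold verticality rather than the sharper $\delta_0,m$-fold verticality that the corollary requires.
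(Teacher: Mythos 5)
Your proposal is correct and follows essentially the same route as the paper: lift the periodic orbit through the covering $\Pi$, concatenate lifts of $\delta_0$ to build $\delta_{\e}\subset\EAprime$, and, for part 3, lift the representatives to $\Bdeltahat\times S$ so that Lemma \ref{Lemma_periodic_orbits_cycles_lifted} can be applied. The one place the paper spells things out more explicitly is your ``same $\Pi$-monodromy computation'' step: the paper lifts the basepoint-preserving homotopy between $\delta_{\e}'$ and $\delta_{\e}$ via the homotopy lifting property and uses the uniqueness of lifts to identify $\hat{\delta}(1)$ with $\hat{\delta}_{\e}$, forcing $\hat{q}_1=\hat{q}_{m+1}$ (the paper also notes that deck transformations of $\Disc\times S$ are fixed-point free, so the deck element $\hat{\gamma}$ with $\hat{q}_{m+1}=\hat{\gamma}(\hat{q}_1)$ is the identity); your appeal to the $\pi_1(E)$-invariance of loop-lifting is equivalent, just more compressed.
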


\begin{proof}
 Let us assume that the map $\Pmap : \Cpzeroprime
\to \Apzeroprime$ has a periodic orbit of period $m>0$ on
$\Cpzeroprime.$ Denote this orbit by $q_1,...,q_m.$ Consider its
lift $\hat{q}_1,...,\hat{q}_{m+1}$ on $\Cdeltaprimehat \times
\{p_0\}$ so that $\Pmaphat(\hat{q}_i)=\hat{q}_{i+1}$ for
$i=1,...,m$. Then, there exists $\hat{\gamma} \in \hat{\Gamma}_0$
such that
$\Pmaphat(\hat{q}_m)=\hat{q}_{m+1}=\hat{\gamma}(\hat{q}_{1}).$ The
fact that all $m+1$ points belong to the same orbit implies that
they lie on the same leaf $\hat{\varphi}^{\e}_{\hat{q}_1}$ from
the foliation $\hat{\Fol}.$ Analogously to the proof of lemma
\ref{Lemma_periodic_orbits_cycles_lifted}, let
$\delta(\hat{q}_i,\hat{q}_{i+1})$ be the lift of $\delta_0$ on the
leaf $\hat{\varphi}^{\e}_{\hat{q}_1}$ so that
$\delta(\hat{q}_i,\hat{q}_{i+1})$ covers $\delta_0$ under the
projection $pr_S$ and connects the points $\hat{q}_i$ and
$\hat{q}_{i+1}$ for $i=1,...,m.$ Because of the way the map
$\Pmaphat$ is defined, all arcs $\delta(\hat{q}_i,\hat{q}_{i+1})$
are contained in $\Aprimehat \times S.$ Therefore, the curve
$\hat{\delta}_{\e} = \cup_{i=1}^{m-1}
\delta(\hat{q}_i,\hat{q}_{i+1})$ is contained in $\Aprimehat
\times S$ and goes through all the points
$\hat{q}_1,...,\hat{q}_m.$

The image $\delta_{\e}=\Pi(\hat{\delta}^{\e})$ inside $E$ is a
loop lying on the leaf
$\varphi^{\e}_{q_1}=\Pi(\hat{\varphi}^{\e}_{\hat{q}_1})$ from the
perturbed foliation $\Fol.$ Moreover, $\delta_{\e}$ is contained
in $\EAprime=\Pi(\Aprimehat \times S)$ and passes through the
points of the periodic orbit $q_1,...,q_m.$ As pointed out in the
proof of the previous lemma, the loop $\delta_{\e}$ is non trivial
on $\varphi^{\e}_{q_1}$ and defines a marked complex cycle
$(\Delta_{\e},q_{\e})$, where $q_{\e}$ can be chosen to be any
point from the $m-$periodic orbit of $\Pmap.$ Without loss of
generality, we can think that $q_{\e}=q_1.$ Thus, we have proved
points 1 and 2 from the current statement.

Let us now look at an arbitrary representative
$\delta^{\prime}_{\e}$ of the marked complex cycle $(\Delta_{\e},
q_{\e})$ and its projection $H(\delta^{\prime}_{\e})$ on $B.$
Clearly, $\delta_{\e}^{\prime}$ is contained in $\Edelta$ exactly
when its image $H(\delta_{\e}^{\prime})$ is contained in
$\Bdelta.$ As we know $\Pi(\hat{q}_1)=q_1=q_{\e},$ so the loop
$\delta_{\e}^{\prime} \ni q_1$ lifts as a path
$\hat{\delta}_{\e}^{\prime}$ starting from $\hat{q}_1$ on $\Disc
\times S$ under the covering map $\Pi.$ The projection
$\tilde{\delta}_{\e}^{\prime} =
pr_{\Disc}(\hat{\delta}_{\e}^{\prime})$ on the disc $\Disc$ is the
lift of $H(\delta_{\e}^{\prime})$ under the universal covering map
$\pi.$ This is true because of the identity $H \circ \Pi = \pi
\circ pr_{\Disc}.$

Assume first that the loop $H(\delta_{\e}^{\prime})$ is contained
in $\Bdelta$ and is homotopic to a point inside $\Bdelta.$ 
For that reason, the lift $\tilde{\delta}_{\e}^{\prime}$ is a loop
in $\Bdeltahat$ and therefore $\hat{\delta}_{\e}^{\prime}$ is also
a loop contained in $\Bdeltahat \times S$.

By assumption, $\delta_{\e}^{\prime}$ and $\delta_{\e}$ are
representatives of the same marked cycle $(\Delta_{\e},q_{\e}).$
This implies that there exists a homotopy $\delta(t)$ on the leaf
$\varphi^{\e}_{q_{\e}}$ between the two loops, keeping the base
point $q_{\e}$ fixed. Since the leaf is contained in $E,$ the
homotopy $\delta(t)$ takes place inside $E.$ As pointed out
earlier, $\delta_{\e}^{\prime}$ lifts to a loop
$\hat{\delta}_{\e}^{\prime}$ contained in $\Bdeltahat \times S$
and passing through $\hat{q}_1$. The homotopy lifting property for
covering spaces applies again \cite{H}, leading to a lifted
homotopy $\hat{\delta}(t)$ inside $\Disc \times S$ such that
$\Pi(\hat{\delta}(t))=\delta(t).$ Since $\hat{\delta}(0)=
\hat{\delta}^{\prime}_{\e}$ is a loop, then $\hat{\delta}(1)$ is
also a loop that passes through $\hat{q}_1$ and such that
$\Pi(\hat{\delta}(1))=\delta_{\e}.$ Therefore,
$\hat{\delta}(1)=\hat{\delta}_{\e}.$ It follows from here that
$\hat{q}_1 = \hat{q}_{m+1}= \hat{\gamma}(\hat{q}_1).$ But
$\hat{\gamma}$ can have a fixed point inside $\Disc \times S$ only
if $\hat{\gamma}=id_{(\Disc \times S)}.$ Therefore, the lifted map
$\Pmaphat$ has a periodic orbit of period $m$ and
$pr_{\Disc}(\hat{q}_1) \in \Cdeltaprimehat.$ By point 2 from lemma
\ref{Lemma_periodic_orbits_cycles_lifted}, it follows that the
representative $\delta^{\prime}_{\e}$ is $\delta_0,m-$fold
vertical.

It is easier to see that the converse is also true. If
$\delta_{\e}^{\prime}$ is free homotopic to $\delta_0^m$ inside
$\Edelta$ then its projection $H(\delta_{\e})$ is necessarily free
homotopic to a point inside $\Bdelta.$ If the homotopy between
$\delta_{\e}^{\prime}$ and $\delta_0^m$ is denoted by
$\delta_{\e}(t),$ then it is enough to project with $H$ and obtain
the homotopy $H(\delta_{\e}(t))$ connecting the loop
$H(\delta_{\e})$ to the point $H(\delta_0)=u_0.$
\end{proof}

\vspace{2mm}
\paragraph{\em Proof of Theorem \ref{thm-pmap}.} All pieces of the theorem
are already proved. We only need to put them together. The
existence of a global cross-section $\Bpzero$ transverse to the
unperturbed foliation $\Foliation^0$ follows from Corollary
\ref{transverse surface}. Then we can see in the beginning of
Section 4.1 that $\Apzeroprime$ is transverse to the perturbed
foliation $\Fol$. By Corollary \ref{Corollary Pmaphat descend}, we
are able to construct the desired Poincar\'e map. Lemma \ref{lemma
complex atlas} and Lemma \ref{Lemma Complex Pmap} provide us with
a complex structure on the cross-section with respect to which the
map is holomorphic. Corollary \ref{corollary about peridoic orbits
and cycles} establishes the correspondence between periodic orbits
and multi-fold cycles and explains the link between the dynamical
features of the Poincar\'e transformation and the topological
properties of the multi-fold cycles with respect to the fibred
domain $\Edelta.$  $\square$ \vspace{2mm}

\section{Rapid Evolution of Marked Complex Cycles} \label{Section
on rapid evolution }

Our next goal is to explore the behavior of multi-fold limit
cycles of $\Fol$ as the parameter $\e$ approaches zero. We would
like to show their escape from large sub-domains of the complex
plane $\CC$ as explained in Theorem \ref{rapid-evolution}. This
phenomenon is what we call a rapid evolution of marked limit
cycles and this will be the topic of the current discussion.
Before we can give a proof of Theorem \ref{rapid-evolution} we
will need some auxiliary statements.

\subsection{Continuous Families of Orbits and Cycles}

We begin with some useful constructions. Fix a positive integer
$m>0$ and for convenience, consider an embedded arc $\eta$ in the
parameter disc $D_{r}(0),$ where $r>0$ is the radius chosen in
Lemma \ref{lemma about lifted Pmap}. Define the surface
$$\Yprime=(\Aprimehat \times \{p_0\})/\hat{\Gamma}_0.$$  By
construction, $\Xdeltaprimehat \times \{p_0\},$ $\Ahat \times
\{p_0\}$ and $\Xdeltahat \times \{p_0\}$ are
$\hat{\Gamma}_0-$invariant sub-surfaces of $\Aprimehat \times
\{p_0\},$ so the quotients $$\Xdeltaprime=(\Xdeltaprimehat \times
\{p_0\})/\hat{\Gamma}_0 \, , \,\,\, Y=(\Ahat \times
\{p_0\})/\hat{\Gamma}_0 \,\,\, \text{and} \,\,\,
\Xdelta=(\Xdeltahat \times \{p_0\})/\hat{\Gamma}_0$$ are
sub-surfaces of $\Yprime$ such that $\Xdelta \subset Y \subset
\Xdeltaprimehat.$ Denote by
$$\pizero \, : \, \Aprimehat \times \{p_0\} \longrightarrow
\Yprime$$ the corresponding quotient map. Since $\Pmaphat :
\Xdeltaprimehat \times \{p_0\} \to \Aprimehat \times \{p_0\}$ is
$\hat{\Gamma}_0$ - equivariant, that is $\hat{\gamma}\circ
\Pmaphat = \Pmaphat \circ \hat{\gamma}$ for any $\hat{\gamma} \in
\hat{\Gamma}_0,$ it descends to a diffeomorphism $$\Pmaptilde \, :
\, \Xdeltaprime \, \longrightarrow \, \Yprime $$ so that $\pizero
\circ \Pmaphat = \Pmaptilde \circ \pizero.$ Because by
construction $$\Pmaphat^k(\Xdeltahat \times \{p_0\}) \subset \Ahat
\times \{p_0\}\, , \,\, \text{for $\e \in D_r(0)$ and
$k=1,...,m,$}$$ the descended map has the corresponding property
$$\Pmaptilde^k(\Xdelta) \subset Y \, , \,\, \text{for $\e \in D_r(0)$ and
$k=1,...,m$}.$$

Denote the restriction of $\Pi$ on the surface $\Aprimehat \times
\{p_0\}$ by
$$\Pipzero = \Pi|_{(\Aprimehat \times \{p_0\})} \, : \, \Aprimehat \times
\{p_0\} \longrightarrow \Apzeroprime.$$ Then, the map $\Pipzero$
is a covering map.

\begin{lem}\label{Lemma Pmaphat and Pmaptilde complex structure}
Let $\Atlas_{\e}(\Apzeroprime) = \{ (\Uqzero,\phiqzeroe) \,\, :
\,\, q_0 \in \Apzeroprime\}$ be the complex atlas for
$\Apzeroprime$ as defined in Lemma \ref{lemma complex atlas}. Then
$\Aprimehat \times \{p_0\}$ has a complex atlas
$$\Atlas_{\e}(\Aprimehat \times \{p_0\}) = \{
(\Uqzerohat,\phiqzeroehat) \, \, : \, \, \hat{q}_0 \in \Aprimehat
\times \{p_0\}\},$$ such that the covering map $\Pipzero$ is
holomorphic. The new atlas makes the lifted Poincar\'e map
$\Pmaphat$ holomorphic, depending analytically on $\e.$
Analogously, the surface $\Yprime$ has a complex structure given
by the atlas
$$\Atlas_{\e}(\Yprime) = \{(\Uxzerotilde,\phixzeroetilde) \, \, : \, \, x_0 \in \Yprime\},$$
such that the quotient map $\pizero$ is holomorphic. This new
atlas makes the map $\Pmaptilde$ holomorphic, depending
analytically on $\e.$
\end{lem}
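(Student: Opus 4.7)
The plan is to build both atlases by pulling back the complex atlas $\Atlas_{\e}(\Apzeroprime)$ from Lemma \ref{lemma complex atlas} through the relevant covering maps, and then transfer holomorphy of $\Pmap$ to $\Pmaphat$ and $\Pmaptilde$ using the intertwining relations $\Pipzero \circ \Pmaphat = \Pmap \circ \Pipzero$ and $\pizero \circ \Pmaphat = \Pmaptilde \circ \pizero$. Since $\Pipzero$ is a covering map between smooth surfaces, for each $\hat{q}_0 \in \Aprimehat \times \{p_0\}$ I would choose an evenly covered neighborhood $\Uqzerohat$ of $\hat{q}_0$ small enough that $\Pipzero|_{\Uqzerohat}$ is a diffeomorphism onto a chart neighborhood $\Uqzero$ of $q_0 = \Pipzero(\hat{q}_0)$ in $\Atlas_{\e}(\Apzeroprime)$, and set $\phiqzeroehat = \phiqzeroe \circ \Pipzero|_{\Uqzerohat}$.

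The first routine step is verifying that the collection $\Atlas_{\e}(\Aprimehat \times \{p_0\}) = \{(\Uqzerohat,\phiqzeroehat)\}$ is a holomorphic atlas: on the overlap of two such charts, the transition $\phiqoneehat \circ \phiqzeroehat^{-1}$ equals, after restriction, the downstairs transition $\phiqonee \circ \phiqzeroe^{-1}$, which is holomorphic (and analytic in $\e$) by Lemma \ref{lemma complex atlas}. By construction $\Pipzero$ reads as the identity in these charts, so it is holomorphic. The holomorphy of $\Pmaphat$ then follows by reading the identity $\Pipzero \circ \Pmaphat = \Pmap \circ \Pipzero$ in the charts $(\Uqzerohat,\phiqzeroehat)$ and $(\Uqonehat,\phiqoneehat)$ with $\Pmaphat(\Uqzerohat) \subset \Uqonehat$: the local expression of $\Pmaphat$ coincides with that of $\Pmap$, which was shown holomorphic and analytic in $\e$ in Lemma \ref{Lemma Complex Pmap}.

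For the quotient $\Yprime = (\Aprimehat \times \{p_0\})/\hat{\Gamma}_0$, I would first observe that each $\hat{\gamma} \in \hat{\Gamma}_0$ acts by biholomorphisms on the atlas just constructed: since $\Pipzero \circ \hat{\gamma} = \Pipzero$, the map $\hat{\gamma}$ reads locally as a change of chart upstairs sitting above the identity downstairs, hence is holomorphic. Because $\hat{\Gamma}_0$ is a subgroup of the deck group $\hat{\Gamma}$ of the covering $\Pipzero$, its action on $\Aprimehat \times \{p_0\}$ is free and properly discontinuous, so the quotient $\Yprime$ inherits a unique complex structure $\Atlas_{\e}(\Yprime)$ for which $\pizero$ is a holomorphic covering map; explicitly, charts $(\Uxzerotilde,\phixzeroetilde)$ are obtained by pushing forward restrictions of $\phiqzeroehat$ to $\hat{\Gamma}_0$-small neighborhoods. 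Holomorphy of $\Pmaptilde$ and its analyticity in $\e$ follow from the intertwining $\pizero \circ \Pmaphat = \Pmaptilde \circ \pizero$: locally $\Pmaptilde$ is represented, via the sections of $\pizero$ implicit in the chart construction, by the same local expression as $\Pmaphat$.

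The only real point requiring care — and what I would treat as the main potential obstacle — is checking that the complex structure on $\Yprime$ is globally well-defined rather than just locally, i.e.\ that different choices of evenly covered neighborhoods and of $\hat{\Gamma}_0$-lifts yield compatible charts. This reduces to the fact already established, that deck transformations in $\hat{\Gamma}_0$ are biholomorphisms of $(\Aprimehat \times \{p_0\},\Atlas_{\e}(\Aprimehat \times \{p_0\}))$, so any two local sections of $\pizero$ differ by such a biholomorphism and thus produce holomorphically equivalent charts. The analytic dependence on $\e$ is inherited at every step, since the downstairs atlas $\Atlas_{\e}(\Apzeroprime)$ and the map $\Pmap$ depend analytically on $\e$, and pullback through the $\e$-independent covering $\Pipzero$ preserves this dependence.
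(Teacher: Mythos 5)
Your proposal is essentially the paper's own argument, merely spelled out in full: the paper's proof is a one-liner that pulls back the complex structure from $\Apzeroprime$ to $\Aprimehat\times\{p_0\}$ and pushes it forward to $\Yprime$, and you carry out exactly that, using the intertwining identities $\Pipzero\circ\Pmaphat = \Pmap\circ\Pipzero$ and $\pizero\circ\Pmaphat = \Pmaptilde\circ\pizero$ together with the freeness of the $\hat{\Gamma}_0$-action to transfer holomorphy. The approach and the key observations (deck transformations become biholomorphisms in the pulled-back atlas, local expressions of $\Pmaphat$ and $\Pmaptilde$ coincide with that of $\Pmap$) are the same; you have just made explicit the details the paper calls "straightforward."
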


\begin{proof}
The proof of this fact is straightforward. All we have to do is to
pull back the complex structure given by
$\Atlas_{\e}(\Apzeroprime)$ to the surface $\Aprimehat \times
\{p_0\}$ in the first case, and to push forward the same structure
on the surface $\Yprime$ in the second case.
\end{proof}

The holomorphic nature of the Poincar\'e map guarantees that every
time the map has an isolated periodic orbit for some particular
value of $\e,$ there will be a continuous family of periodic
orbits defined near that particular value of $\e.$ In other words,
an isolated periodic orbit gives rise to a local continuous family
of periodic orbits due to the complex analytic properties of the
Poincar\'e map. In addition, there will be a continuous family of
marked complex cycles as well.

\begin{lem}\label{Lemma peridoic orbits and families and connection with cycles}
Let $\eprime$ belong to the parameter disc $D_r(0),$ where the
radius $r>0$ is chosen as in Lemma \ref{lemma about lifted Pmap}.
\vspace{1.5mm}

\noindent{1.} Assume $\Pmaphateprime$ has an isolated $m-$periodic
orbit $(z_1,p_0),...,(z_m,p_0)$ on the cross-section
$\Xdeltaprimehat \times \{p_0\}.$ Then $\pizero$ maps that orbit
to an isolated $m-$periodic orbit $x_1,...,x_m$ for the map
$\Pmaptildeeprime$ on the surface $\Xdeltaprime.$ \vspace{2mm}

\noindent{2.} There exists $r^{\prime}>0$ with
$D_{r^{\prime}}(\eprime) \subset D_r(0),$ such that for any
embedded in $D_{r^{\prime}}(\eprime)$ curve $\etaprime,$ passing
through $\eprime,$ there exists a continuous family
$\bigl((z_1(\e),p_0),...,(z_m(\e),p_0)\bigr)_{\e \in \etaprime}$
of periodic orbits for the map $\Pmaphat$ on $\Xdeltaprimehat
\times \{p_0\},$ which for $\e=\eprime$ becomes
$(z_1,p_0),...,(z_m,p_0).$ Moreover, the continuous family of
$\Pmaphat$ is mapped by $\pizero$ to a continuous family of
periodic orbits $(x_1(\e),...,x_m(\e))_{\e \in \etaprime}$ for the
transformation $\Pmaptilde$ on the surface $\Xdeltaprime,$ which
for $\e=\eprime$ becomes the orbit $x_1,...,x_m.$ \vspace{2mm}

\noindent{3.} If $\Pmaphat$ has a continuous family of periodic
orbits on $\Xdeltaprimehat \times \{p_0\}$ for $\e$ varying on
some curve $\etatilde$ embedded in $D_r(0),$ then the perturbed
foliation $\Fol$ has a continuous family of marked cycles
$\{(\Delta_{\e},q_{\e})\}_{\e \in \etatilde}.$ \vspace{1mm}
\end{lem}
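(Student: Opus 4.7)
My plan is to treat the three parts separately, using the holomorphic structures from Lemma \ref{Lemma Pmaphat and Pmaptilde complex structure} as the essential ingredient.

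For Part 1, I would note that $\hat{\Gamma}_0$ acts freely and properly discontinuously on $\Aprimehat \times \{p_0\}$, since it is a subgroup of the deck group of the covering $\Pi$. Hence the quotient $\pizero$ is itself a covering map and in particular a local diffeomorphism. The intertwining relation $\pizero \circ \Pmaphat = \Pmaptilde \circ \pizero$ iterates to $\pizero \circ \Pmaphat^m = \Pmaptilde^m \circ \pizero$. Near $(z_1,p_0)$ the map $\pizero$ is a diffeomorphism onto its image, so a punctured neighborhood of $(z_1,p_0)$ containing no other fixed points of $\Pmaphateprime^m$ is sent homeomorphically onto a punctured neighborhood of $x_1 = \pizero(z_1,p_0)$ containing no other fixed points of $\Pmaptildeeprime^m$. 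This yields the desired isolated $m$-periodic orbit $x_1,\dots,x_m$ in $\Xdeltaprime$.

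For Part 2, I would invoke holomorphic persistence of isolated fixed points. By Lemma \ref{Lemma Pmaphat and Pmaptilde complex structure}, $\Pmaptilde$ is holomorphic in the spatial variable and analytic in $\e$, so in a local complex chart on $\Yprime$ centered at $x_1$ the function $g(x,\e) = \Pmaptilde^m(x) - x$ is holomorphic in $(x,\e)$ and $g(\cdot,\eprime)$ has an isolated zero at $x_1$ of some positive multiplicity $\mu$. The argument principle applied on a sufficiently small disc about $x_1$ then shows that for all $\e$ in some disc $D_{r^\prime}(\eprime) \subset D_r(0)$ the equation $g(\cdot,\e) = 0$ has exactly $\mu$ zeros in that disc, counted with multiplicity. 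The principal obstacle is that when $\mu > 1$ these zeros need not depend single-valuedly and holomorphically on $\e$; however, the Weierstrass preparation theorem provides a Puiseux-type parametrization of the vanishing set, and restricting to the one-dimensional embedded arc $\etaprime$ one can always select a continuous branch $x_1(\e)$. Setting $x_i(\e) = \Pmaptilde^{i-1}(x_1(\e))$ gives the continuous family for $\Pmaptilde$. Lifting through the local diffeomorphism $\pizero$ near each $(z_i,p_0)$ and using path-lifting along $\etaprime$ produces the continuous family $((z_1(\e),p_0),\dots,(z_m(\e),p_0))_{\e \in \etaprime}$ for $\Pmaphat$.

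For Part 3, I would follow the construction from the proof of Lemma \ref{Lemma_periodic_orbits_cycles_lifted}. For each $\e \in \etatilde$ the $m$ points $(z_1(\e),p_0),\dots,(z_m(\e),p_0)$ lie on the single leaf $\hat{\varphi}^\e_{(z_1(\e),p_0)}$ of $\hat{\Fol}$; lifting $\delta_0$ under $pr_S$ between consecutive points produces arcs whose concatenation is a loop $\hat{\delta}_\e$ on this leaf. Projecting, $\delta_\e = \Pi(\hat{\delta}_\e)$ is a loop on $\varphi^\e_{q_\e}$ based at $q_\e = \Pi(z_1(\e),p_0)$, and the marked cycle is $(\Delta_\e,q_\e) := ([\delta_\e],q_\e)$. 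Continuous dependence of the foliation $\hat{\Fol}$ on $\e$, continuity of the lifts of $\delta_0$ on initial data and parameter, and the continuous variation of the base points $q_\e$ on $\Bpzero$ together ensure that $\{(\Delta_\e,q_\e)\}_{\e \in \etatilde}$ is a continuous family in the sense of the definition preceding Theorem \ref{rapid-evolution}.
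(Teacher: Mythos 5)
Your arguments for Parts~2 and~3 are sound and close to the paper's: Part~3 is essentially the paper's construction (concatenation of lifted arcs as in Lemma~\ref{Lemma_periodic_orbits_cycles_lifted}), and Part~2 is a mild variant -- the paper runs Weierstrass preparation for $\Pmaphat^m$ upstairs and then pushes down via $\pizero$, while you run it for $\Pmaptilde^m$ downstairs and then lift, which is equally valid once you observe that the element $\hat{\gamma}(\e)\in\hat{\Gamma}_0$ with $\Pmaphat^m(z_1(\e),p_0)=\hat{\gamma}(\e)(z_1(\e),p_0)$ is locally constant in $\e$ and equals the identity at $\eprime$.

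The genuine gap is in Part~1. Your local-diffeomorphism argument establishes only that $x_1$ is an \emph{isolated} fixed point of $\Pmaptildeeprime^m$; it does not show that the image orbit $x_1,\dots,x_m$ actually has period $m$. Since $\pizero$ identifies $\hat{\Gamma}_0$-orbits, it could a priori happen that $x_{k+1}=x_1$ for some $1\le k\le m-1$, even though the $m$ points $(z_1,p_0),\dots,(z_m,p_0)$ upstairs are distinct. That would mean $(z_{k+1},p_0)=\hat{\gamma}(z_1,p_0)$ for some nontrivial $\hat{\gamma}\in\hat{\Gamma}_0$, and ruling this out is exactly the nontrivial content of the paper's Part~1. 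The paper does so by iterating the equivariance $\hat{\gamma}\circ\Pmaphat=\Pmaphat\circ\hat{\gamma}$ to conclude $\gamma^m(z_1)=z_1$, so $\gamma^m$ has a fixed point in the open disc; since $\Gamma_0$ is a subgroup of a Fuchsian group of a Riemann surface it has no elliptic elements, forcing $\gamma=\mathrm{id}$ and hence a contradiction. Without some argument of this kind your claim that the image is an ``$m$-periodic orbit'' (as opposed to a $k$-periodic one with $k\mid m$, $k<m$) is unjustified. A secondary, smaller point: your assertion that a small punctured neighborhood of $x_1$ contains no other fixed points of $\Pmaptildeeprime^m$ needs proper discontinuity of the $\hat{\Gamma}_0$-action to rule out nearby solutions of $\Pmaphateprime^m(z)=\hat{\gamma}(z)$ with $\hat{\gamma}\neq\mathrm{id}$; this is true but should be said explicitly.
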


\begin{proof}
 By assumption, $(z_1,p_0),...,(z_m,p_0)$ is an
isolated $m-$periodic orbit of $\Pmaphateprime$ on
$\Xdeltaprimehat \times \{p_0\}.$ The image of this orbit under
the covering map $\pizero$ is denoted by $x_1,...,x_m.$ Because of
the property $\pizero \circ \Pmaphateprime = \Pmaptildeeprime
\circ \pizero,$ the orbit $x_1,...,x_m$ is also isolated and
periodic with possibly a smaller or equal period. Clearly,
$\Pmaptildeeprime^m(x_1)=\Pmaptildeeprime^m(\pizero(z_1,p_0))=\pizero
\circ \Pmaphateprime^m(z_1,p_0)=\pizero(z_1,p_0)=x_1.$

Assume there exists a smaller $k=1,...,m-1$ such that
$x_1=x_{k+1}$. Then, there exists $\hat{\gamma} \in
\hat{\Gamma}_0$ such that
$(z_{k+1},p_0)=\hat{\gamma}(z_1,p_0)=(\gamma(z_1),p_0)$ for the
corresponding $\gamma \in \Gamma_0.$ On the other hand,
$(z_{k+1},p_0)=\Pmaphateprime^k(z_1,p_0).$ Thus,
$\Pmaphateprime^k(z_1,p_0)=\hat{\gamma}(z_1,p_0).$ Applying
$\Pmaphateprime^k$ to the last equality we obtain
\begin{align*}
 \Pmaphateprime^{2k}(z_1,p_0)&=\Pmaphateprime^k \circ
 \hat{\gamma}(z_1,p_0)\\
 &=\hat{\gamma} \circ \Pmaphateprime^k(z_1,p_0)\\
 &=\hat{\gamma}^2(z_1,p_0).
\end{align*}
In general, $\Pmaphateprime^{jk}(z_1,p_0)=\hat{\gamma}^j(z_1,p_0)$
for any $j \in \Naturals.$ In particular, when $j=m$ we have
$(z_1,p_0)=\Pmaphateprime^{mk}(z_1,p_0)=\hat{\gamma}^m(z_1,p_0)=(\gamma^m(z_1),p_0).$
As it turns out, $z_1=\gamma^m(z_1)$ which means that $\gamma^m$
has a fixed point in the interior of the hyperbolic disc $\Disc.$
As a subgroup of a Fuchsian group associated to a Riemann surface,
$\Gamma_0$ can have no elliptic elements but only parabolic and
hyperbolic \cite{Hu},\cite{K}. Therefore, $\gamma^m=id_{\Disc}$
and more precisely, $\gamma=id_{\Disc}.$ Thus, as it turns out,
$(z_{k+1},p_0)=(z_1,p_0)$ which is not the case.

As $\Pmaphateprime^m(z_1,p_0)=(z_1,p_0),$ we choose a chart
$(\Uzonehat,\phizoneehat)$ form the atlas $\Atlas_{\e}(\Aprimehat
\times \{p_0\})$ around the point $(z_1,p_0)$ and a smaller
neighborhood $\Uzonehat^{\prime}$ of the same point such that
$\Uzonehat^{\prime} \subset \Uzonehat$ and
$\Pmaphateprime^m(\Uzonehat^{\prime}) \subset \Uzonehat.$ Let
$D^{\prime}=\phizoneeprimehat(\Uzonehat^{\prime}) \subset \Disc$
where $\phizoneeprimehat(z_1,p_0)=0 \in D^{\prime}.$ If
$r^{\prime}>0$ is chosen small enough, then
$$\Pem = \phizoneehat \circ \Pmaphat^m \circ \phizoneehat^{-1} \, : \, D^{\prime} \, \longrightarrow
\Disc$$ for $\e \in D_{r^{\prime}}(\eprime) \subset D_r(0).$
Notice that $\Pemprime(0)=0.$ The complex valued function
$$\Ftilde : D^{\prime} \to \cc \,\,\, \text{defined as}\,\,\,
\Ftilde(\zeta,\e)=\Pem(\zeta)-\zeta$$ is holomorphic with respect
to $\zeta \in D^{\prime}$ and with respect to $\e \in
D_{r^{\prime}}(\eprime).$ By Hartogs' Theorem \cite{GR}, it is
holomorphic with respect to $(\zeta,\e) \in D^{\prime} \times
D_{r^{\prime}}(\eprime).$ Since $\Pemprime(0)=0,$ the point
$(0,\eprime)$ is a zero of $\Ftilde,$ that is
$\Ftilde(0,\eprime)=0.$

Let us look at the zero locus of $\Ftilde$ in $D^{\prime} \times
D_{r^{\prime}}(\eprime).$ The fact that the periodic orbit is
isolated means that $(z_1,p_0)$ is an isolated fixed point for the
map $\Pmaphateprime^m$. Therefore $0$ is an isolated fixed point
for $\Pemprime$ and thus, it is an isolated zero for the
holomorphic function $\Ftilde(\zeta,\eprime).$ By Weierstrass
Preparation Theorem \cite{GR},\cite{Ch}, we can write
$$\Ftilde(\zeta,\e)=\prod_{j=1}^{s}(\zeta - \alpha_j(\e))\theta(\zeta,\e),$$
where $\theta(0,\eprime) \neq 0$ and $\{\alpha_j(\e) \, : \,
j=1,...,s\}$ depend analytically on $\e,$ satisfying the
equalities $\alpha_1(\eprime)=...=\alpha_s(\eprime)=0$ and
possibly branching into each other. 

Now, let $\etaprime$ be some curve embedded in the disc
$D_{r^{\prime}}(\eprime)$ and passing through $\eprime.$ For $\e$
 varying on $\etaprime,$ we can choose a branch, denoted for simplicity by
$\alpha_1(\e).$ Then the desired continuous family for $\Pmaphat$
can be constructed by setting
$(z_1(\e),p_0)=\phizoneehat^{-1}(\alpha_1(\e))$ and
$(z_{j+1}(\e),p_0)=\Pmaphat^{j}(z_1(\e),p_0)$ for $j=1,...,m-1.$
Its image under the covering $\pizero$ will provide the continuous
family of periodic orbits for $\Pmaptilde.$

The third point of the statement follows directly form Lemma
\ref{Lemma_periodic_orbits_cycles_lifted} with the remark that the
representative $\delta_{\e}$ is constructed to depend continuously
on the parameter $\e.$
\end{proof}

\subsection{Proof of Theorem \ref{rapid-evolution}.}

By assumption, the Poincar\'e map $\Pmapezero$ has an isolated
periodic orbit $(q_1,...,q_m)$ on the cross-section $\Cpzeroprime$
and the perturbed foliation $\Foliation^{\e_0}$ has a marked limit
cycle $(\Delta,q_1)$ with a $\delta_0,m-$fold vertical
representative $\delta^{\prime}$ contained inside $\Ecdelta.$
Since the loop $\delta^{\prime}$ passes through the point $q_1,$
the latter in fact belongs to the surface $\Cpzero \subset
\Ecdelta.$ Because $\Pi(\Xdeltahat \times S)=
\overline{\Ecdelta},$ there exists a point $(z_1,p_0) \in
\Xdeltahat \times S$ such that $\Pi(z_1,p_0)=q_1.$

As already discussed in the proof of Corollary \ref{corollary
about peridoic orbits and cycles}, the fact that $H(\deltaprime)
\subset \Cdelta$ is null-homotopic implies that $\deltaprime$
lifts to a loop $\deltaprimehat$ on $\Xdeltahat \times S$ that
passes through the point $(z_1,p_0)$ and its image
$\Pi(\deltaprimehat)=\deltaprime.$ Let
$(z_{j+1},p_0)=\Pmaphatzero^{j}(z_1,p_0)$ for $j=1,...,m-1.$ The
orbit $(z_1,p_0),$...,$(z_m,p_0)$ belongs to $\Ahat \times
\{p_0\}.$ The loop $\deltaprime$ can be regarded as a path from
the point $q_1$ to itself so its lift $\deltahat,$ being also a
loop, is a path from $(z_1,p_0)$ to itself. For that reason, we
can conclude $\Pmaphatezero^m(z_1,p_0)=(z_1,p_0)$ which means that
$(z_1,p_0),...,(z_m,p_0)$ is an $m-$periodic orbit on $\Ahat
\times \{p_0\}.$ Together with that, the orbit is isolated because
the original orbit $q_1,...,q_m$ is isolated.

Let $\eta$ be an embedded in $D_r(0)$ curve, connecting $\e_0$ to
$0$. For convenience, define a natural linear order $\preceq$ on
it so that $0 \prec \e_0.$ By point 2 from Lemma \ref{Lemma
peridoic orbits and families and connection with cycles}, there
exists $D_{r_0}(\e_0) \subset D_r(0)$ for some $r_0>0,$ such that
if $\eta_0=\eta \cap D_{r_0}(\e_0),$ then there is a continuous
family of periodic orbits
$\bigl((z_1(\e),p_0),...,(z_m(\e),p_0)\bigr)_{\e \in \eta_0}$ of
the map $\Pmaphat$ on the cross-section $\Xdeltaprimehat \times
\{p_0\}.$

Define $\etamax \subseteq \eta$ as the maximal relatively open
subset of $\eta$ on which the continuous family
$\bigl((z_1(\e),p_0),...,(z_m(\e),p_0)\bigr)_{\e \in \etamax}$ of
periodic orbits for $\Pmaphat$ exists on $\Xdeltaprimehat \times
\{p_0\}.$ Since $\eta_0 \neq \varnothing$ is a relatively open in
$\eta,$ the inclusion $\eta_0 \subseteq \etamax $ holds and
therefore $\etamax \neq \varnothing.$

By point 3 from Lemma \ref{Lemma peridoic orbits and families and
connection with cycles} there is a continuous family of marked
complex cycles $\{(\Delta_{\e},q_{\e})\}_{\e \in \etamax}$ with
$q_{\e}=\Pi(z_1(\e),p_0).$ Near $\e_0 \in \etamax$ the cycles
$(\Delta_{\e},q_{\e})$ have $\delta_0,m-$fold vertical
representatives $\deltaprimee$ contained in $\Ecdelta$ because for
$\e=\e_0$ the cycle $(\Delta_{\e_0},q_{\e_0})$ has a
$\delta_0,m-$fold vertical representative, namely
$\deltaprime=\deltaprime_{\e_0},$ contained inside the domain
$\Ecdelta.$ We are interested to find out what happens to the
cycles as $\e$ varies on $\etamax.$

Let $\etaprime$ be the set of all $\e$ from $\etamax$ for which
the periodic orbits from the continuous family
$\bigl((z_1(\e),p_0),...,(z_m(\e),p_0)\bigr)_{\e \in \etamax}$ are
contained in $\Ahat \times \{p_0\}.$ As we already saw, at $\e_0$
the orbit $(z_1(\e_0),p_0),...,(z_m(\e_0),p_0)$ is inside $\Ahat
\times \{p_0\}$ and by continuity, the orbits
$(z_1(\e),p_0),...,(z_m(\e),p_0)$ are also contained in $\Ahat
\times \{p_0\}$ for $\e$ near $\e_0.$ This fact shows that
$\etaprime \neq \varnothing$ and in fact it has a nonempty
interior.

Let $\edoublestar=\inf_{\eta}(\etamax)$ be the infimum of
$\etamax$ with respect to the linear ordering on $\eta.$ Then,
$D_{\frac{1}{N}}(\edoublestar) \cap \etamax \neq \varnothing$ for
all $N \in \Naturals.$ Similarly, define
$\estar=\inf_{\eta}(\etaprime)$ as the infimum of $\etaprime.$ The
inclusion $\etaprime \subseteq \etamax$ implies that $\edoublestar
\preceq \estar.$ We are going to show that $\edoublestar \neq
\estar.$

Assume $\edoublestar=\estar,$ that is for all $N \in \Naturals$
there exists $\e_N \in D_{\frac{1}{N}}(\edoublestar) \cap \etamax$
such that $(z_1(\e_N),p_0),...,(z_m(\e_N),p_0)$ is contained in
$\Ahat \times \{p_0\}.$ As explained in point 2 of Lemma
\ref{Lemma peridoic orbits and families and connection with
cycles} the family of periodic orbits
$\bigl((z_1(\e),p_0),...,(z_m(\e),p_0)\bigr)_{\e \in \etamax}$ is
mapped by $\pizero$ to a periodic family
$(x_1(\e),...,x_m(\e))_{\e \in \etamax}$ of the map $\Pmaptilde$
on the surface $\Xdeltaprime.$ Also, the corresponding orbits
$x_1(\e_N),...,x_m(\e_N)$ are inside $Y \subset \Xdeltaprime$ for
$N \in \Naturals.$ In particular, the sequence $\{x_1(\e_N)\}_{N
\in \Naturals}$ is contained in the compact set $Y.$ Then, there
exists $x^{*} \in Y$ and a subsequence $\{x_1(\e_n)\}_{n \in
\Naturals}$ such that $\lim_{n \to \infty} x_1(\e_n)=x_1^*$ and
$\lim_{n \to \infty} \e_n=\edoublestar.$ By continuity, the
identity $\tilde{P}_{\delta_0,\e_n}^m(x_1(\e_n))=x_1(\e_n)$
converges to $\Pmaptildeestar^m(x_1^*)=x_1^*$ as $n \to \infty.$
Generate a periodic orbit $x_1^*,...,x_m^*$ by setting
$x_{j+1}^*=\Pmaptildeestar^j(x_1^*)$ for $j=1,...,m-1.$ Since
$x_{j+1}(\e_n)=\Pmaptildeestar^j(x_1(\e_n))$, the limit for each
$x_j(\e_n)$ is $x_j^*$ as $n \to \infty.$ Thus, the periodic orbit
$x_1^*,...,x_m^*$ is the limit of periodic orbits
$x_1(\e_n),...,x_m(\e_n).$

We will show that under the current assumptions $\edoublestar=0$.
Assume that $\edoublestar \neq 0$. Then $\{\e \in \eta \, : \, \e
\prec \edoublestar\} \neq \varnothing.$ We proceed in a very
similar fashion to that in the proof of Lemma \ref{Lemma peridoic
orbits and families and connection with cycles}. The point $x_1^*
\in Y$ is fixed by the map $\Pmaptildeestar^m.$ Take a complex
chart $(\Uxonestartilde, \tilde{\phi}_{x^{*}_1,\edoublestar})$
form the atlas $\Atlas_{\e}(Y^{\prime})$ around the point $x_1^*$
and a smaller neighborhood $\Uxonestartildeprime \subset
\Uxonestartilde$ of the same point such that
$\Pmaptildeestar^m(\Uxonestartildeprime) \subset \Uxonestartilde.$
Let
$D^{\prime}=\tilde{\phi}_{x^{*}_1,\edoublestar}(\Uxonestartildeprime)
\subset \Disc$ where $\tilde{\phi}_{x^{*}_1,\edoublestar}(x_1^*)=0
\in D^{\prime}.$ Choose $r^*>0$ small enough such that
$$\Pem = \tilde{\phi}_{x^{*}_1,\e} \circ \Pmaptilde^m \circ \tilde{\phi}_{x^{*}_1,\e}^{-1}
\, : \, D^{\prime} \, \longrightarrow \Disc$$ for $\e \in
D_{r^*}(\edoublestar) \subset D_r(0).$ Notice that
$\Pemstar(0)=0.$ The complex valued function
$$\Ftilde : D^{\prime} \to \cc \,\,\, \text{defined as}\,\,\,
\Ftilde(\zeta,\e)=\Pem(\zeta)-\zeta$$ is holomorphic with respect
to $(\zeta,\e) \in D^{\prime} \times D^{r^*}(\edoublestar).$ Since
$\Pemstar(0)=0,$ the point $(0,\edoublestar)$ is a zero of
$\Ftilde,$ that is $\Ftilde(0,\edoublestar)=0.$

We are interested in the zero locus of $\Ftilde$ in $D^{\prime}
\times D^{r^*}(\edoublestar).$ If we assume for a moment that
$\Ftilde(\zeta,\e)\equiv 0$ on $D^{\prime}$ then we would have the
identity $\Pem(\zeta)\equiv \zeta$ on $D^{\prime}$ and therefore
$\Pmaptilde^m(x)\equiv x$ on the open subset
$\Uxonestartilde^{\prime} \subset \Xdeltaprime.$ Because of the
analyticity of $\Pmaptilde^m(x)$ with respect to both $x$ and
$\e,$ the identity $\Pmaptilde^m(x)\equiv x$ will hold on all of
$\Xdeltaprime$ and for all $\e \in D_{r}(0)$. In particular, it
will be true for $\e=\e_0$. But for that value the map
$\Pmaptilde^m$ has an isolated fixed point $x_1(\e_0) \in Y
\subset \Xdelta$ which leads to a contradiction. Therefore
$\Ftilde$ is not identically zero.

There are two cases for $\Ftilde.$ Either
$\Ftilde(\zeta,\edoublestar) \equiv 0$ or
$\Ftilde(\zeta,\edoublestar) \not\equiv 0$ for $\zeta \in
D^{\prime}.$ For both of those options $\Ftilde$ can be written as
$$\Ftilde(\zeta,\e)=(\e-\edoublestar)^bF(\zeta,\e)$$
where $F(\zeta,\edoublestar)\not\equiv 0$ and $b \geq 0.$ When $b
> 0$ we have the first case and when $b=0$ we have the second
case.

Let us look at the zero locus of $F.$ By Weierstrass Preparation
Theorem \cite{Ch}, \cite{GR}, $F$ can be written as
$$F(\zeta,\e)=\prod_{j=1}^{s}(\zeta - \alpha_j(\e))\theta(\zeta,\e),$$
where $\theta(0,\edoublestar) \neq 0$ and $\{\alpha_j(\e) \, : \,
j=1,...,s\}$ depend analytically on $\e,$ satisfying the
equalities $\alpha_1(\eprime)=...=\alpha_s(\eprime)=0$ and
possibly branching into each other. Without loss of generality, we
can think that $D^{\prime}$ is chosen small enough so that
$\nu(\zeta,\e) \neq 0$ for all $(\zeta,\e) \in D^{\prime}\times
D_{r^*}(\edoublestar).$ Let
$\tilde{\alpha}_j(\e)=\phixonestaretilde^{-1}(\alpha_j(\e))$.
Since $x_1(\e_n) \to x_1^*,$ there exists $N_0 \in \Naturals$ such
that $x_1(\e_n) \in \Uxonestartilde^{\prime}$ for $n>N_0.$ By the
continuity of $x_1(\e),$ for each $\e \in D_{r^*}(\edoublestar)
\cap \etamax$ we have that $x_1(\e)=\tilde{\alpha}_j(\e)$ for some
$j=1,..,m.$ Thus, $x_1(\e)$ converges to $x_1^*$ as $\e \to
\edoublestar$ always staying on the zero locus of $F.$ Thus we can
extend $x_1(\e)$ continuously on $\eta$ past $\edoublestar$ by
setting $x_1(\e)=\tilde{\alpha}_j(\e)$ for $\e \in
D_{r^*}(\edoublestar) \cap \{\e \in \eta \, : \, \eta \preceq
\edoublestar\}.$ By construction, the identity
$\Pmaptilde^m(\tilde{\alpha}_1(\e))= \tilde{\alpha}_1(\e)$ holds
and if we set $x_{j+1}(\e)=\Pmaptilde^j(\tilde{\alpha}_1(\e))$ we
obtain a continuation of the family $x_1(\e),...,x_m(\e)$ on the
relatively open arc $D_{r^*}(\edoublestar) \cap \{\e \in \eta \, :
\, \eta \preceq \edoublestar\}$. As a result we have a continuous
family $\bigl(x_1(\e),...,x_m(\e)\bigr)_{\e \in \etatilde}$ of
periodic orbits for $\Pmaptilde$ where $\etatilde =
(D_{r^*}(\edoublestar) \cap \{\e \in \eta \, : \, \eta \preceq
\edoublestar\}) \cup \etamax$ is relatively open in $\eta$.

Since the family $(z_1(\e),p_0),...,(z_m(\e),p_0)$ is the lift of
$x_1(\e),...,x_m(\e)$ for $\e \in \etamax$ and the latter extends
on $\etatilde \supset \etamax,$ the former also extends on
$\etatilde$ as a family of periodic orbits for $\Pmaphat$ on the
cross-section $\Xdeltaprimehat \times \{p_0\}.$ This conclusion
contradicts the maximality of $\etamax$, stemming from the
assumption that $\edoublestar \neq 0.$ Therefore $\edoublestar=0$
and $x_1(0),...,x_m(0)$ is a periodic orbit of
$\tilde{P}_{\delta_0,0}=id_{\Xdeltaprime}.$ For that reason,
$x_1(0)=...=x_m(0)=x^*$ inside $\Xdeltaprime.$

Take a complex chart $(\Uxstartilde,\tilde{\phi}_{x^*,0})$ around
the point $x^*$ and choose a smaller neighborhood
$\Uxstartilde^{\prime} \subset \Uxstartilde$ of $x^*$ such that
$\Pmaptilde^k(\Uxstartilde^{\prime}) \subset \Uxstartilde$ for all
$k=1,...,m$ and $\e \in D_{r_0}(0),$ where $r_0>$ is small enough.
Let $D^{\prime}=\tilde{\phi}_{x^*,0}(\Uxstartilde^{\prime})
\subset \Disc$ and
$$\Pe=\phixstaretilde \circ \Pmaptilde \circ \phixstaretilde^{-1} \, : \, D^{\prime} \longrightarrow \Disc.$$
Denote by $\zeta_j(\e)=\phixstaretilde(x_j(\e))$ for $\e \in
D_{r_0}(0) \cap \etamax = \eta_0$ and $j=1,...,m$. Then
$\zeta_1(\e),...,\zeta_m(\e)$ is a periodic orbit for $\Pe$ in
$D^{\prime}$. Notice,that due to the holomorphic nature of the map
$\Pmaptilde,$ those $\e \in \etamax$ for which $x_i(\e)=x_j(\e),$
where $1\leq i < j \leq m,$ are isolated because the family at
$\e_0$ consists of an $m-$periodic point. As before $\Pe(\zeta)$
is holomorphic with respect to $(\zeta,\e)$. Then we can write the
map as $$\Pe(\zeta)=\zeta + \e^l I(\zeta) + \e^{l+1}R(\zeta,\e)$$
where $I(\zeta) \not\equiv 0$ and $l \geq 1.$ If we iterate the
map  $m$ times we obtain the representation
$$\Pe^m(\zeta)=\zeta + \e^l m I(\zeta) + \e^{l+1}R_{(m)}(\zeta,\e).$$

For $\e \in \eta_0 - \{0\}$ the equations
\begin{align*}
\Pe(\zeta)-\zeta &= \e^l(I(\zeta) + \e R(\zeta,\e))=0 \,\,\,\,
\text{and}\\
\Pe^m(\zeta)-\zeta &= \e^l(m I(\zeta) + \e R_{(m)}(\zeta,\e))=0
\end{align*}
are divisible by $\e^l$ and thus, become
\begin{equation}\label{equation for periodic orbits}
I(\zeta) + \e R(\zeta,\e)=0 \,\,\,\,\, \text{and} \,\,\,\,\, m
I(\zeta) + \e R_{(m)}(\zeta,\e)=0
\end{equation}
The function $I(\zeta)$ is not identically zero, so it has
isolated zeroes. Choose $D^{\prime \prime} \subset D^{\prime}$ to
be a small closed disc centered at zero, so that no zeroes of
$I(\zeta)$ are contained in $D^{\prime \prime} - \{0\}.$ In
particular, $I(\zeta) \neq 0$ for $\zeta \in
\partial D^{\prime \prime}.$ We can decrease the parameter radius
$r_0>0$ enough so that by Rouche's Theorem \cite{Smth} the
equations (\ref{equation for periodic orbits}) will have the same
number of zeroes, counting multiplicities, as the equation
$I(\zeta)=0.$ Clearly, all zeroes of $\Pe(\zeta)-\zeta$ are zeroes
of $\Pe^m(\zeta)-\zeta$ because the fixed points of $\Pe$ are
fixed points of $\Pe^m$ but not the other way around. On the other
hand, as already noted, for almost every $\e \in D_{r_0}(0)$ there
is an $m-$periodic orbit $\zeta_1(\e),...,\zeta_m(\e)$ for the map
$\Pe$ inside $D^{\prime \prime}.$ Thus, we can see that
$\Pe^m(\zeta)-\zeta$ has at least $m$ zeroes more than
$\Pe(\zeta)-\zeta,$ which contradicts the fact that both of these
should have the same number of zeroes. The contradiction comes
from the assumption that $\edoublestar=\estar.$ Therefore we can
conclude that $\edoublestar \neq \estar$ and in fact $\edoublestar
\prec \estar.$

Let $\eta_1 = \{\e \in \etamax \, : \, \edoublestar \prec \e \prec
\estar\}.$ Then for any $\e_1 \in \eta_1$ at least one
$(z_{j_0}(\e_1),p_0)$ is contained in $\Xdeltaprimehat \times
\{p_0\}$ but not in $\Ahat \times \{p_0\}.$ It follows form here
that $(z_1(\e_1),p_0)$ is not contained in $\Xdeltahat \times
\{p_0\},$ otherwise if $(z_1(\e_1),p_0)$ were in $\Xdeltahat
\times \{p_0\},$ then
$(z_{j_0}(\e_1),p_0)=P_{\delta_0,\e_1}^{j_0-1}(z_1(\e_1),p_0)$
would be inside $\Ahat \times \{p_0\},$ which is not the case.

By point 3 of Lemma \ref{Lemma peridoic orbits and families and
connection with cycles} there exists a continuous family  of
marked cycles $\{(\Delta_{\e},q_{\e})\}_{\e \in \etamax}$, where
$q_{\e}=\Pi(z_1(\e),p_0)$. For any $\e_1 \in \eta_1 \subset
\etamax$ there are two options. The first one is that $q_{\e_1}
\in \Cpzeroprime - \Cpzero$. Then, no representative of
$(\Delta_{\e_1},q_{\e_1})$ is contained in $\Ecdelta$ because all
of them pass through $q_{\e_1}$ and $q_{\e_1}$ is not in
$\Ecdelta$. The second option is that $(z_1(\e_1),p_0)$ belongs to
$\Pi^{-1}(\Cpzero) = \cup_{\gamma \in \Gamma} \,
\bigl(\gamma(\Cdeltahat) \times \{p_0\}\bigr)$ but does not belong
to $\Cdeltahat \times \{p_0\}.$ In this case, there exists $\gamma
\in \Gamma - \Gamma_0$ such that $(z_1(\e_1),p_0) \in
\gamma(\Cdeltahat) \times \{p_0\}.$ By point 2 of Lemma
\ref{Lemma_periodic_orbits_cycles_lifted}, any representative
$\deltaprime_{\e_1}$ of the marked complex cycle
$(\Delta_{\e_1},q_{\e_1})$, that is contained in $\Edelta$, is not
$\delta_0,m-$fold vertical. Thus, Theorem \ref{rapid-evolution} is
true with $\sigma = \etamax$. $\square$ \vspace{2mm}

\section{Foliations with Multi-Fold Limit Cycles}

In this chapter we discuss an example, such that for any $m \in
\Naturals,$ a family of polynomial foliations of the form
\ref{basic} has a limit $m$-fold vertical cycle. More specifically
we are going to look at the two-parameter family
\ref{twoparameter} already introduced in Section \ref{Section Main
Thms}.

\subsection{The Foliation and Its Poincar\'e Map}
As defined earlier, the foliation $\Foliation^{a,\e}$ is given by
the complex line field
\begin{equation} \label{equation two parameter family}
F^{a,\e} = \ker\Bigl( d H + \e \bigl((H-1) (y dx - x dy) + a y \,
d H \bigr)\Bigr),
\end{equation}
with a transverse to infinity integrable part $H = x^2 + y^2$ and
parameters $\e$ and $a$. The leaf
$$S_1=\{(x,y) \in \CC \, | \, x^2 + y^2 =1\}$$ tangent to $\ker(dH)$
is diffeomoprhic to a cylinder with a nontrivial loop on it
$\delta_0 = S_1 \cap \RR$. It is very important to point out that,
in fact, $S_1$ is tangent to the line field $F^{a,\e}$ and
therefore is a leaf of the foliation $\Foliation^{a,\e}$ for all
$(a,\e) \in \cc^* \times \cc^*.$

Define $A(\delta_0)$ as a tubular neighborhood of $\delta_0$ on
the surface $S_1$ and $N(\delta_0)$ as a tubular neighborhood of
$A(\delta_0)$ in $\CC.$ Let $$\Band_{r_0} =\{\zeta \in \cc \, : \,
|\text{Im}(\zeta)| < r_0 \}$$ be a an infinite horizontal band in
$\cc$ of width $r_0$ and let $$D_{r_0}(1) = \{\xi \in \cc \, : \,
|\xi - 1| \leq r_0\}$$ be the disc of radius $r_0$ centered at
$1$. Consider the map
$$f_1 : \Band_{r_0} \times D_{r_0}(1) \to N(\delta_0) \,\,\, \text{defined by}
\,\,\, f_1 : (\zeta,\xi) \mapsto (\xi \cos{\zeta}, \xi
\sin{\zeta}).$$ Without loss of generality, we can think that
$f_1(\Band_{r_0} \times D_{r_0}(1)) = N(\delta_0).$ In other
words, $f_1$ can be thought of as the universal covering map of
$N(\delta_0)$. Notice, that implies $f_1(\Band_{r_0} \times \{1\})
= A(\delta_0) \subset S_1$.

The pull-back $f_1^*F^{a,\e}$ on $\Band_{r_0} \times D_{r_0}(1)$
of the line field $F^{a,\e}$ is
$$f_1^*F^{a,\e} = \ker{\bigl(d(\xi^2) - \e (\xi^2 - 1)\xi^2 \, d\zeta + a \e \, \xi \sin{\zeta} \, d(\xi^2)\bigr)}.$$
For $0<r_1<1,$ define the map $$f_2 : \Band_{r_0} \times
D_{r_1}(0) \to \Band_{r_0} \times D_{r_0}(1)\,\,\, \text{where}
\,\,\, f_2 : (z,w) \mapsto \Bigl(z, \frac{1}{\sqrt{1 -
w}}\Bigr).$$ Composing the maps $f_1$ and $f_2$ we obtain $$f =
f_1 \circ f_2 \, : \, \Band_{r_0} \times D_{r_1}(0) \,
\longrightarrow \, N(\delta_0).$$ Then the pull-back $f^*F^{a,\e}$
is
$$f^*F^{a,\e} = \ker\left(\frac{1}{(1-w)^2}\Bigl(dw - \e\,w dz + \e a \, \frac{\sin{z}}{\sqrt{1-w}}\, dw
\Bigr)\right)$$ and since $\frac{1}{(1-w)^2}$ is well defined and
nonzero for $w \in D_{r_1}(0),$ the line field becomes
$$f^*F^{a,\e} = \ker{\Bigl(dw - \e\,w dz + \e a \, \frac{\sin{z}}{\sqrt{1-w}}\, dw
\Bigr)}.$$ The holomorphic function $\mu_{\e}(z)=e^{-\e z}$ is
nonzero everywhere, so
\begin{align*}
f^*F^{a,\e} &= \ker{\Bigl(e^{-\e z} dw - \e\,w e^{-\e z}\,dz + \e
a \, \frac{e^{-\e z} \sin{z}}{\sqrt{1-w}}\, dw \Bigr)} \\
&= \ker{\Bigl(d(w e^{-\e z}) + \e a \, \frac{e^{-\e z}
\sin{z}}{\sqrt{1-w}}\, dw \Bigr)} \\
&= \ker{(d J^{(\e)} + a \omega^{(\e)})}
\end{align*}
$$\text{where} \,\,\,\, J^{(\e)} = w e^{-\e z} \,\,\,\, \text{and}\,\,\,\, \omega^{(\e)} =
\frac{e^{-\e z} \sin{z}}{\sqrt{1-w}}\, dw.$$

Our next step is to define the Poincar\'e transformation for the
foliation $\Foliation^{a,\e},$ using the local chart $f$ on the
tubular neighborhood $N(\delta_0)$ of the loop $\delta_0.$ Denote
the desired map by
$$P_{a,\e} \, : \, D_{r_1}(0) \, \longrightarrow \, \cc.$$ We are
going to explain how it is constructed.

Define the path $\deltahat_0 = \{(t,0) \in \Band_{r_0} \times
\{0\} \, : \, t \in [0,2\pi]\}.$ Then $f(\deltahat_0)=\delta_0.$
The segment $\deltahat_0$ can be lifted to a path
$\delta_{a,\e}(u)$ on the leaf of $\Foliation^{a,\e}$ passing
through the point $(0,u) \in \{0\}\times D_{r_1}(0),$ so that if
$pr_1 : (z,w) \mapsto z$ then
$pr_1(\delta_{a,\e}(u))=\deltahat_0.$ The lift $\delta_{a,\e}(u)$
has two endpoints. The first one is $(0,u)$ and the second one we
denote by $(2\pi,P_{a,\e}(u))$. When a=0, the map $P_{0,\e}(u)$
comes from the foliation $\Foliation^{0,\e}$ which in our tubular
neighborhood is given by $\ker(d(we^{-\e z})).$ Then,
$\delta_{0,\e} = \{(t,u e^{\e t}) \, : \, t \in [0,2\pi]\}$ and so
$P_{0,\e}=e^{2 \pi \e}u.$ Since $\deltahat_{a,\e}(0)=\deltahat_0$,
the equality $P_{a,\e}(0)=0$ holds for all $(a,\e)$. As a result,
the Poincar\'e transformation can be written down as
$$P_{a,\e}(u)=e^{2 \pi \e}u + a I(u,\e)u + a^2 G(u,a,\e)u$$ and
its $k$-th iteration can be expressed as
$$P^k_{a,\e}(u)=e^{2 k \pi \e}u + a I_{(k)}(u,\e)u + a^2 G_{(k)}(u,a,\e)u.$$
If $m=\frac{i}{m}$ then after $m$ iterations the map becomes
$$P^m_{a,\frac{i}{m}}(u)=u + a I_{(m)}\Bigl(u,\frac{i}{m}\Bigr)u + a^2 G_{(m)}\Bigl(u,a,\frac{i}{m}\Bigr)u.$$
Notice that in this case, by lifting $\deltahat_0^m$ we obtain the
path \begin{equation} \label{curve formula}
\delta^{(m)}_{a,\frac{i}{m}}(u)=\{(t,e^{\frac{i}{m}t}u)\, | \, t
\in [0,2 \pi m]\} \end{equation} with endpoints $(0,u)$ and $(2
\pi m, u)$.

In order to study the periodic orbits of $P_{a,\e}(u)$, we are
going to look at the difference $P^m_{a,\frac{i}{m}}(u)-u$. Since
$\bigl(d J^{({i}/{m})} +
\omega^{({i}/{m})}\bigl)|_{\delta_{a,{i}/{m}}(u)} = 0,$ it can be
concluded that
\begin{align*}
&\int_{\delta_{a,{i}/{m}}(u)} \bigl(d J^{({i}/{m})} + a
\omega^{({i}/{m})}\bigr)=0 \,\,\,\, \text{and hence} \\
&\int_{\delta_{a,{i}/{m}}(u)} d J^{({i}/{m})} = - a
\int_{\delta_{a,{i}/{m}}(u)}\omega^{({i}/{m})}.
\end{align*}
The one-form $d J^{({i}/{m})}$ is exact and yields
\begin{align} \label{Formula Integral Equalities}
\nonumber P^m_{a,i/m}(u)-u &= P^m_{a,i/m}(u)e^{-2 \pi}-ue^{0}\\
\nonumber &= J^{({i}/{m})}(2 \pi m,u) - J^{({i}/{m})}(0,u) \\
          &= \int_{\delta_{a,{i}/{m}}(u)} d J^{({i}/{m})} \\
\nonumber &= - a \int_{\delta_{a,{i}/{m}}(u)}\omega^{({i}/{m})}.
\end{align}
Dividing equation (\ref{Formula Integral Equalities}) by $a$ and
taking into account that the limit of the left hand side is
$I_{(m)}(u,i/m)u,$ as well as $\delta_{a,{i}/{m}}(u) \to
\delta_{0,{i}/{m}}(u),$ when $a \to \infty,$ we can conclude that
$$I_{(m)}(u,i/m)u = - \int_{\delta_{0,{i}/{m}}(u)}\omega^{({i}/{m})}.$$
Now, remembering that $\delta_{0,{i}/{m}}(u)$ is of the form
(\ref{curve formula}) compute
\begin{eqnarray*} \label{}
I_{(m)}(u,i/m)u &=& - \int_{\delta_{0,{i}/{m}}(u)}
\frac{e^{-\frac{i}{m}z}
\sin{z}}{\sqrt{1-w}}\, d w \\
&=& - \int_{0}^{2 \pi m} \frac{e^{-\frac{i}{m}t}
\sin{t}}{\sqrt{1-u e^{\frac{i}{m}t}}}\,\Bigl( \frac{i}{m} u e ^{\frac{i}{m}t} \Bigr) dt \\
&=& - \frac{i u}{m} \int_{0}^{2 \pi m} \frac{\sin{t}}{\sqrt{1-u
e^{\frac{i}{m}t}}} \, dt.
\end{eqnarray*}
Since both sides of the equation are divisible by $u,$
\begin{equation} \label{first approximation formula one}
I_{(m)}(u,i/m) = - \frac{i}{m} \int_{0}^{2 \pi m}
\frac{\sin{t}}{\sqrt{1-u e^{\frac{i}{m}t}}} \, dt
\end{equation}
To solve the integral, notice that $1/\sqrt{1 - w}$ is well
defined and holomorphic in the disc $D_{r_1}(0) \not\ni 1$ so it
expands as convergent series
$$(1 - w)^{-\frac{1}{2}} = \sum_{k=0}^{\infty} b_k w^k ,$$ where $b_k
= (-1)^k \frac{-\frac{1}{2}\bigl(-\frac{1}{2} - 1
\bigr)\bigl(-\frac{1}{2} - 2\bigr)...\bigl(-\frac{1}{2} -
(k-1)\bigr)}{k !} \neq 0.$ Thus,
\begin{eqnarray} \label{eqn 1}
\nonumber \int_{0}^{2 \pi m} \frac{\sin{t}}{\sqrt{1-u
e^{\frac{i}{m}t}}} \, dt &=& \int_{0}^{2 \pi m} \left(
\sum_{k=0}^{\infty} b_k e^{i
\frac{k}{m} t} u^k \right) \sin{t} \, dt \\
&=& \sum_{k=0}^{\infty} b_k \left(\int_{0}^{2 \pi m} e^{i
\frac{k}{m} t} \sin{t} \, dt \right) u^k.
\end{eqnarray}
The value of the integral depends on the coefficients of (\ref{eqn
1}) that depend on the integral
\begin{eqnarray*}
\int_{0}^{2 \pi m} e^{i \frac{k}{m} t} \sin{t} \, dt &=&
\frac{1}{2i} \int_{0}^{2 \pi m} e^{i \frac{k}{m} t}(e^{i t} - e^{-
i t}) \, dt \\
&=& \frac{1}{2i} \int_{0}^{2 \pi m} \bigl(e^{i\frac{k + m}{m} t} -
e^{i\frac{k - m}{m} t} \bigr) \, dt
\end{eqnarray*}
When $k \neq m$ the primitive of the function $\bigl( e^{i\frac{k
+ m}{m} t} - e^{i\frac{k - m}{m} t} \bigr)$ under the integral is
again $2 \pi m$-periodic, leading to the conclusion that the
integral is zero. When $k=m$ the integral becomes
\begin{eqnarray*}
\int_{0}^{2 \pi m} e^{i t} \sin{t} \, dt &=& \frac{1}{2i}
\int_{0}^{2 \pi m} e^{i t}(e^{i t} - e^{-
i t}) \, dt \\
&=& \frac{1}{2i} \int_{0}^{2 \pi m} \bigl(e^{i 2 t} - 1 \bigr) \,
dt \\
&=& \frac{1}{(2i)^2} \Bigl( e^{2 i t} \Bigr)_{0}^{2 \pi m} -
\frac{\pi m}{i} \\
&=& i \pi m
\end{eqnarray*}
The computations above lead to
\begin{align*}
I_{(m)}\bigl(u,\frac{i}{m}\bigr) = - \frac{i}{m} \, b_m \, i \pi m
\,\, u^m = \pi b_m \, u^m.
\end{align*}
Finally, for $\e=\frac{i}{m}$, setting $c = \pi b_m \neq 0,$ the
Poincar\'e map takes the form
\begin{equation} \label{Equation Pmap with computed first melnikov function}
P_{a,\frac{i}{m}}^m(u) = u + a\, c u^{m+1} + a^2
G_{(m)}\bigl(u,a,{i}/{m}\bigr)u.
\end{equation}

\subsection{Existence of Periodic Orbits and Multi-Fold Cycles}

This section establishes the result of Theorem \ref{example}.
\vspace{2mm}
\paragraph{\em Proof of Theorem \ref{example}.} From the discussion in the
introduction, the existence of a multi-fold limit cycle of
$\Foliation^{a,\e}$ follows from the existence of an isolated
$m$-periodic orbit of the Poincar\,e transformation
$P_{a,\e}.$ 
representative of the cycle in this case will be contained in the
the tubular neighborhood $N(\delta_0)$ and therefore free
homotopic to $\delta_0^m$ in it. This means the limit cycle will
be $\delta_0,m$-fold. Thus, the main objective  will be to show
that $P_{a,\e}$ has an isolated $m$-periodic orbit.

Assume we can show that the periodic orbit exists. After fixing
the appropriate $a,$ so that the presence of the periodic orbit is
secured, Theorem \ref{thm-pmap} will apply to the family
$\Foliation^{a,\e}$ and by picking $p_0 = (1,0),$ we can construct
a global smooth cross-section $\Bpzero$ diffeomorphic to the
punctured plain $B = \cc^{*}$. In fact, the topology of the
integrable leaves is so simple (they are cylinders) that $\Bdelta
= B$ and so $\Edelta=E.$ The regions $\Cdeltaprime, \Cdelta$ and
$\Aprime$ will be nested annuli of very large width and we will
have, as Theorem \ref{thm-pmap} implies, a global Poincar\'e
transformation on a cross-section $\Cpzeroprime \subset \Bpzero.$
It is easy to notice that, as Lemma \ref{Lemma Complex Pmap}
reveals, the map $P_{a,\e}$ can be regarded simply as a
representation of the $\Pmap$ in one of the complex charts
introduced in Lemma \ref{lemma complex atlas}. Theorem
\ref{thm-pmap} shows, that the complex cycle corresponding to the
$m$-periodic orbit of $P_{a,\e}$ will be in fact limit
$\delta_0,m$-fold vertical and will satisfy the premises of
Theorem \ref{rapid-evolution}. Thus, the limit multi-fold vertical
cycle of $\Foliation^{a,\e}$ will be subject to rapid evolution as
described in Theorem \ref{rapid-evolution}.

In the context of the preceding two paragraphs, a small remark is
in order. The theory, developed in the sections preceding the
current one, has to undergo a small correction. Originally, our
assumption was that $B$ is a hyperbolic Riemann surface covered by
the disc $\Disc$. In our example, $B$ is in fact non-hyperbolic
and is covered by $\cc.$ Since $\cc$ is still contractible, all
the proofs and construction will be essentially the same and the
correction will be merely a matter of change in some notations.

We have the radii $r_1 > 0, r_2>0$ and $\bar{r}_3>0$ so that for
any $(a,\e) \in D_{r_2}(0) \times D_{\bar{r}_3}$ the map
$P_{a,\e}\, : \, D_{r_1}(0) \longrightarrow \cc$ is well defined.
Let $m>0$ be such that $i/m \in D_{\bar{r}_3}(0).$

\begin{lem} \label{Lemma Example Periodic Orbit Existence}
There exist $\e_m$ near $\frac{i}{m}$ and a parameter $a_m$ such
that for all $\e$ in a neighborhood of $\e_m,$ the map $P_{a,\e}$
has an isolated periodic orbit of period $m$.
\end{lem}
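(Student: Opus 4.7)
The plan is to exploit the resonance at $\e=i/m$, where $P'_{0,i/m}(0)=e^{2\pi i/m}$ is a primitive $m$-th root of unity, and to bifurcate a genuine $m$-periodic orbit of $P_{a_m,\e}$ by turning on a small nonzero $a_m$. The essential step is to verify that the second-order coefficient in $a$ of the map $P_{a,i/m}^m$ at $u=0$ does not vanish; once this is in place, Rouch\'e's theorem will produce $m$ simple fixed points of $P_{a_m,i/m}^m$ close to $u=0$, and the linear action of $P_{a_m,i/m}$ at $0$ will permute them cyclically, yielding a genuine $m$-periodic orbit.

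Concretely, I would set $Q(u,a,\e):=(P_{a,\e}^m(u)-u)/u$, extended holomorphically at $u=0$ by $(P_{a,\e}^m)'(0)-1$. By (\ref{Equation Pmap with computed first melnikov function}), at $\e=i/m$ one has $Q(u,a,i/m)=acu^m+a^2G_{(m)}(u,a,i/m)$ with $c=\pi b_m\neq 0$. To compute $G_{(m)}(0,0,i/m)$, observe that the leaf of $\Foliation^{a,\e}$ through $(0,u)\in\{0\}\times D_{r_1}(0)$ is governed, near $w=0$, by the linear ODE $dw/dz=\e w/(1+a\e\sin z)$; integrating from $z=0$ to $z=2\pi$ and using the standard formula $\int_0^{2\pi}ds/(1+\beta\sin s)=2\pi/\sqrt{1-\beta^2}$ gives
\begin{equation*}
(P_{a,i/m}^m)'(0)=\exp\!\left(\frac{2\pi i}{\sqrt{1+a^2/m^2}}\right)=1-\frac{i\pi}{m^2}\,a^2+O(a^4),
\end{equation*}
so $G_{(m)}(0,0,i/m)=-i\pi/m^2\neq 0$ (for $m$ large, $i/m$ also lies in the disc $D_{\bar{r}_3}(0)$ on which $P_{a,\e}$ is well defined).

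Next I would fix a sufficiently small nonzero $a_m$ and study $Q(u,a_m,i/m)=0$. Dividing by $a_m$ reduces the equation for nontrivial periodic points to $cu^m+a_mG_{(m)}(u,a_m,i/m)=0$; on any circle $|u|=\rho\,|a_m|^{1/m}$ with $\rho$ large enough, $|cu^m|$ dominates $|a_mG_{(m)}(u,a_m,i/m)|$, so Rouch\'e's theorem yields exactly $m$ zeros in the corresponding disc. Since they lie close to the $m$ distinct $m$-th roots of $-a_mG_{(m)}(0,0,i/m)/c=a_mi\pi/(cm^2)$, they are simple and pairwise distinct. This set is $P_{a_m,i/m}$-invariant because $P$ commutes with $P^m$; as $P'_{a_m,i/m}(0)=e^{2\pi i/m}+O(a_m^2)$, the map $P_{a_m,i/m}$ permutes the $m$ zeros approximately as a rotation of order $m$, and by uniqueness of the Rouch\'e zeros this permutation must be exactly cyclic. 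Hence the $m$ zeros form a single $P_{a_m,i/m}$-orbit of length exactly $m$.

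Finally, because each of these $m$ zeros is a simple zero of $u\mapsto Q(u,a_m,i/m)$, the implicit function theorem supplies holomorphic continuations $\e\mapsto u_k(\e)$ for $\e$ in a neighborhood of $\e_m:=i/m$, and both distinctness and the cyclic $P_{a_m,\e}$-action persist by continuity. The hard part is precisely the nonvanishing of $G_{(m)}(0,0,i/m)$: without it the $m$ candidate zeros would all coalesce at $u=0$ and no bifurcation would occur, so the closed-form expression for $(P_{a,i/m}^m)'(0)$ above, coming from the exact integrability of the linearized flow along $w=0$, is what makes the argument go through.
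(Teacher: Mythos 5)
Your proof is correct and takes a genuinely different route from the paper's. The paper treats the coefficient $G_{(m)}(0,a,\e)$ as a black box: it sets $g_0(a,\e)=(e^{2\pi m\e}-1)+a I_{(m)}(0,\e)+a^2 G_{(m)}(0,a,\e)$, applies the inverse function theorem at $(0,i/m)$ using $\partial_{\e}g_0(0,i/m)=2\pi m\neq 0$ to locate the set $Z$ on which $u=0$ fails to be a simple fixed point of $P^m_{a,\e}$, and then selects $(a_m,\e_m)$ off $Z$ (Facts~3 and~4). You instead compute the obstruction explicitly: the linearized ODE $dw/dz=\e w/(1+a\e\sin z)$ together with the classical identity $\int_0^{2\pi}ds/(1+\beta\sin s)=2\pi/\sqrt{1-\beta^2}$ give $(P^m_{a,i/m})'(0)=\exp\bigl(2\pi i/\sqrt{1+a^2/m^2}\bigr)$, hence $G_{(m)}(0,0,i/m)=-i\pi/m^2\neq 0$, which lets you work directly at $\e_m=i/m$. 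Your approach buys an explicit $\e_m$ and a concrete picture of the bifurcating orbit, located near the $m$-th roots of $a_m i\pi/(cm^2)$; the paper's argument is softer and never needs the value of the second-order coefficient. To pin down the exact period you use the geometric fact that $P_{a_m,i/m}$ approximately rotates the ring of $m$ nearly evenly-spaced zeros by $2\pi/m$, while the paper bounds $|e^{2\pi k\e}-1|$ away from zero for $1\leq k\leq m-1$ (Fact~2). Both work, but one small point deserves a line in your write-up: your Rouch\'e comparison on $|u|=\rho|a_m|^{1/m}$ locates $m$ zeros in a shrinking disc; to know that $P_{a_m,i/m}$ genuinely permutes this set you should also run Rouch\'e on $|u|=r_1$ (the paper's Fact~1) to confirm these are all the nonzero fixed points of $P^m$ in the chart, after which your cyclicity argument closes.
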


\begin{proof}
The verification of the claim depends on four facts. Putting them
together will help us determine the values of the parameters $a$
and $\e.$ As before, in order to find a periodic orbit for the map
$P_{a,\e}(u),$ we are going to look at the equation
\begin{equation}\label{general equation}
P_{a,\e}^m(u) - u=0.
\end{equation} Whenever $a \neq 0$ we can rewrite (\ref{general equation}) in the form
\begin{equation*}
\frac{e^{2 \pi m \e} - 1}{a} \,\, u +  I_{(m)}(u,\e) u + a \,
G_{(m)}(u,a,\e) u = 0.
\end{equation*}
Furthermore, having in mind that $u=0$ is always a solution of
(\ref{general equation}), we can divide by $u$ and obtain
\begin{equation} \label{Eq m a epsilon}
g(u,a,\e) = \frac{e^{2 \pi m \e}-1}{a} + I_{(m)}(u,\e) + a \,
G_{(m)}(u,a,\e)=0
\end{equation}
for $u \in D_{r_1}(0), a \in D_{r_2}(0) - \{0\}$ and $\e \in
D_{\bar{r}_3}(0).$ \vspace{2mm}

\paragraph{\bf Fact 1.} Let us focus on the equation
\begin{equation} \label{Eq m a i/m}
g\Bigl(u,a,\frac{i}{m}\Bigr) = I_{(m)}\Bigl(u,\frac{i}{m}\Bigr) +
a G_{(m)}\Bigl(u,a,\frac{i}{m}\Bigr)=0
\end{equation}
If necessary, decrease the radius $r_2>0$ enough so that if we set
\begin{equation*}
\mathcal{M}(r_1,r_2) = \max{\left\{|a|
\left|G_{(m)}\Bigl(u,a,\frac{i}{m}\Bigr)\right| \, \, : \, \,
|u|=r_1 \, \, \text{and} \, \, a \in D_{r_2}(0)\right\}}
\end{equation*}
then $\mathcal{M}(r_1,r_2) < |c| \, r_1^m.$ Since
$I_{(m)}\Bigl(u,\frac{i}{m}\Bigr) = c u^m$, it follows that for
$|u|=r_1$ and for any $a \in D_{r_2}(0)$
$$|c|\, |u|^m = |c| \, r_1^m > \mathcal{M}(r_1,r_2) \geq  |a|
\left|G_{(m)}\Bigl(u,a,\frac{i}{m}\Bigr)\right|,$$
 so by Rouche's Theorem \cite{Smth}, equation (\ref{Eq m a i/m}) has
 exactly $k$ zeroes $u_1(a)$, $u_2(a),$...,$u_m(a)$ in $D_{r_1}(0)$, counted with
 multiplicities. \vspace{2mm}

 \paragraph{\bf Fact 2.} Let $\mu(\e) = \min{ \{ |e^{2 \pi k \e} - 1| \,\, : \,\, 1 \leq k \leq m-1
 \}}$. Regarded as a function, $\mu(\e)$ is continuous and
 $\mu(i/m) > 0.$ Hence, there exists $r_3>0$, such that
 $\overline{D_{r_3}(i/m)} \subset D_{\bar{r}_3}(0)$. Moreover, there exists a constant $\mu > 0,$ such that
 $\mu(\e)>\mu$ for any $\e \in D_{r_3}(i/m)$. If needed,
 decrease $r_2>0$ so that
\begin{equation*} \label{Fact 2 inequality}
 \max{\bigl\{ \, |a| \, \bigl|I_{(k)}(u,\e) + a G_{(k)}(u,a,\e)\bigr| \,\, : \,\, 1 \leq k \leq m-1 \bigl\}}  <  \mu
\end{equation*}
 for all $u \in D_{r_1}(0), a \in D_{r_2}(0)$ and $\e \in
 D_{r_3}(i/m)$. \vspace{2mm}

\paragraph{\bf Fact 3.} Equation (\ref{Eq m a epsilon}) can take
the form
\begin{eqnarray} \label{Eq m a epsilon rewritten}
g(u,a,\e) = g\Bigl(u,a,\frac{i}{m}\Bigr) + \Bigl(g(u,a,\e) -
g\Bigl(u,a,\frac{i}{m}\Bigr)\Bigr) = 0
\end{eqnarray}
For some specific $a \in D_{r_2}(0)-\{0\}$, Fact 1 reveals that
whenever $|u|=r_1,$ the following inequalities hold:
\begin{equation*}
\begin{split}
\left|g\Bigl(u,a,\frac{i}{m}\Bigr)\right| \geq
\left|I_{(m)}\Bigl(u,\frac{i}{m}\Bigr)\right| - |a| \,
\left|G_{(m)}\Bigl(u,a,\frac{i}{m}\Bigr)\right| > 0.
\end{split}
\end{equation*}
Hence, $\mu_1(a)= \min{\left\{
\left|g\Bigl(u,a,\frac{i}{m}\Bigr)\right| \, : \, |u|=r_1
\right\}}
> 0$
Notice, that for any nonzero $a \in D_{r_2}(0)$ one can find a
radius $r_3(a) > 0,$ continuously depending on $a$, such that
\begin{align*} \label{Large inequatity fact 3}
\max{\left\{\left| g(u,a,\e) - g\Bigl(u,a,\frac{i}{m}\Bigr)\right|
\, : \,  |u|=r_1, \,\,\, \e \in
D_{r_3(a)}\bigl(i/m\bigr)\right\}}< \mu_1(a),
\end{align*}
Because of the last inequality, it follows by Rouche's Theorem
that equation (\ref{Eq m a epsilon}) has as many solutions as
equation (\ref{Eq m a i/m}). Thus, due to Fact 1, (\ref{Eq m a
epsilon}) has exactly $m$ solutions $u_1(a,\e),...,u_m(a,\e)$,
counted with multiplicities. If we set $$W = \bigsqcup\limits_{0
\neq a \in D_{r_2}(0)} \,\Bigl( \{a\} \times
D_{r_3(a)}(i/m)\Bigr),$$ then $W$ is open and $\overline{W} \ni
(0,\frac{i}{m}).$\vspace{2mm}

\paragraph{\bf Fact 4.} Let $g_0(a,\e) = (e^{2 \pi m \e} - 1) + a
\, I_{(m)}(0,\e) + a^2 \, G_{(m)}(0,a,\e)$. Notice, that
$g_0\bigl(0, \frac{i}{m}\bigr) = 0 \,\,\, \text{and} \,\,\,
\frac{\partial g_0}{\partial \e}\bigl(0, \frac{i}{m}\bigr) = 2 \pi
m \neq 0.$ Hence, by the inverse function theorem, it follows that
for possibly decreased $r_2>0$ there exists a holomotphic function
$\chi : D_{r_2}(0) \to D_{r_3}\bigl(\frac{i}{m}\bigr)$ such that
$\chi(0) = \frac{i}{m}$ and $g_0(a,\chi(a))=0$ for all $a \in
D_{r_2}(0).$ From here, we can see that the zero locus of $g_0$
inside the product domain $D_{r_2}(0) \times
D_{r_3}\bigl(\frac{i}{m}\bigr)$ is
$$Z = \{(a,\e) \,\, : \,\, g_0(a,\e)=0\} =
\{(a,\chi(a)) \,\, : \,\, a \in D_{r_2}(0)\}.$$ The set $Z$ is
relatively closed in $D_{r_2}(0) \times
D_{r_3}\bigl(\frac{i}{m}\bigr)$ so its complement
$\bigl(D_{r_2}(0) \times D_{r_3}\bigl(\frac{i}{m}\bigr)\bigl) - Z$
is open and nonempty. Therefore, $W \cap \Bigl[\bigl(D_{r_2}(0)
\times D_{r_3}\bigl(\frac{i}{m}\bigr)\bigl) - Z \Bigr] \neq
\varnothing$ is open as well. \vspace{2mm}

Now we are ready to complete the proof of the lemma. Let
$(a_m,\e_m) \in W \cap \Bigl[\bigl(D_{r_2}(0) \times
D_{r_3}\bigl(\frac{i}{m}\bigr)\bigl) - Z \Bigr]$. Apply the
results from Fact 4 to obtain
\begin{align*}
  g_0(a_m,\e_m) = (e^{2 \pi m \e_m} - 1) &+ a_m \, I_{(m)}(0,\e_m) +\\
                                         &+ a^2_m \, G_{(m)}(0,a_m,\e_m) \neq 0.
\end{align*}
Hence, the equation
\begin{align*}
  P_{\e_m,a_m}^m(u) - u = (e^{2\pi m \e_m} - 1) \, u
  &+ a_m \,I_{(m)}(u,\e_m)u +\\
  &+ a^2_m \, G_{(m)}(u,a_m,\e_m)u = 0
\end{align*}
has $u_0=0$ as a simple root.

Since $(a_m,\e_m) \in W$, it follows from Fact 3 that whenever
$|u|=r_1$ the following inequality holds
\begin{align*}
\left|g\Bigl(u,a_m,\frac{i}{m}\Bigr)\right| \geq \mu_1(a_m)
> \left| g(u,a_m,\e_m) - g\Bigl(u,a_m,\frac{i}{m}\Bigr)\right|
\end{align*}
Therefore, by Rouche's Theorem, the equation
\begin{equation} \label{equation a_m eps}
  \begin{split}
   a_m \, g(u,a_m,\e_m) = (e^{2 \pi m \e_m} - 1)
   &+ a_m \, I_{(m)}(u,\e_m) + \\
   &+ a^2_m \, G_{(m)}(u,a_m,\e_m) = 0
   \end{split}
\end{equation}
has as many solutions as
\begin{equation} \label{equation a_m im}
a_m \, g\Bigl(u,a_m,\frac{i}{m}\Bigr) = a_m \,
I_{(m)}\Bigl(u,\frac{i}{m}\Bigr) + a_m^2 \,
G_{(m)}\Bigl(u,a_m,\frac{i}{m}\Bigr) = 0.
\end{equation}
By Fact 1, equation (\ref{equation a_m im}) has $m$ roots
$u_1(a_m),...,u_m(a_m)$ contained in $D_{r_1}(0)$. For that
reason, equation (\ref{equation a_m eps}) has $m$ solutions
contained in $D_{r_1}(0).$ Let us denote them by $u_1(a_m,\e_m),$
...,$u_m(a_m,\e_m).$ As it was established earlier, none of them
is zero. For simplicity, let $u_j = u_j(a_0,\e_0),$ where $j =
1,..,m.$

By Fact 2, for $1 \leq k \leq m-1$ and for $u \in D_{r_1}(0)$,
$$|e^{2 \pi k \e_m} - 1| \geq \mu(a_m) > \mu > |a_m| \, \bigl|I_{(k)}(u,\e_m) + a_m G_{(k)}(u,a_m,\e_m)\bigr|.$$
Having in mind that $u_j \in D_{r_0}(0)$ and each of them is
nonzero for $j=1,..,m$, we estimate
\begin{align*}
\bigl|P_{a_m,\e_m}^k(u_j)-u_j\bigr| =& |u_j| \, \bigl|(e^{2 \pi k
\e_m} - 1) + a_m \, I_{(k)}(u_j,\e_m) \\
&+ a^2_m \, G_{(k)}(u_j,a_m,\e_m)\bigr| \geq |u_j| \, \bigl( |e^{2
\pi k \e_m} - 1| \\ &- |a_m| \, \bigl|I_{(k)}(u_j,\e_m) + a^2_m \,
G_{(k)}(u_j,a_m,\e_m)\bigr|\bigr) > 0.
\end{align*}
For that reason, $P^k_{a_m,\e_m}(u_j) \neq u_j$ for $1 \leq k \leq
m-1$. Hence, the orbit $u_1$,...,$u_m$ consists of different
points and therefore is periodic of period $m$ in $D_{r_1}(0).$
\end{proof}


\end{document}